\numberwithin{equation}{section}
\newtheorem{thm}{Theorem}[section]
\newtheorem{proposition}[thm]{Proposition}
\newtheorem{corollary}[thm]{Corollary}
\newtheorem{lemma}[thm]{Lemma}
\theoremstyle{definition}
\newtheorem{definition}[thm]{Definition}
\newtheorem*{remark*}{Remark}
\newcommand{\GL}{\mathbf{GL}}
\newcommand{\B}{\mathbf{B}}
\newcommand{\co}{\mathfrak{o}}
\newcommand{\mat}[4]{\begin{psmallmatrix}
		#1 & #2 \\  #3 & #4 \end{psmallmatrix} }
\newcommand{\Z}{\mathbf Z}
\newcommand{\C}{\mathrm{C}}
\newcommand{\s}{\mathbf{S}}
\renewcommand{\H}{\mathbf{H}}
\renewcommand{\det}{{\mathrm{det}}}
\newcommand{\G}{\mathbf{G}}
\renewcommand{\SS}{\mathbf{ss}}
\renewcommand{\ss}{\mathbf{ss}}
\newcommand{\IR}{\mathbf{cus}}
\newcommand{\NS}{\mathbf{sns}}
\newcommand{\sns}{\mathbf{sns}}
\newcommand{\lup}{{\ell_2}}
\newcommand{\ldown}{\ell_1}
\newcommand{\ld}{{\ell}_1}
\newcommand{\lu}{{\ell}_2} 
\newcommand{\cO}{{\mathfrak{o}}}
\newcommand{\ind}{\mathrm{Ind}}
\newcommand{\Ol}{{\cO_\ell}}
\newcommand{\vtc}{{V^t_\chi}}
\newcommand{\wtc}{{W^t_{\tilde{\chi}}}}
\newcommand{\K}{\mathbf{K}}
\newcommand{\g}{ \mathfrak{gl}_2}
\newcommand{\M}{{\mathfrak{gl}}}
\newcommand{\tC}{\mathrm{C}}
\newcommand{\rmf}{\mathrm{f}}
\newcommand{\U}{\mathbf{U}}
\newcommand{\uol}{{\U_{\ell}}}
\newcommand{\gol}{{\G_{\ell}}}
\newcommand{\bol}{{\B_{\ell}}}
\newcommand{\zol}{{\Z_{\ell}}}
\newcommand{\ztol}{{\Z^t_{\ell}}}
\newcommand{\I}{\mathrm{I}}
\newcommand{\hta}{{H^t}}
\newcommand{\OO}{\cO}
\newcommand{\glol}{{\GL_2(\cO_\ell)}}
\newcommand{\blol}{{\mathbf{B}(\cO_\ell)}}
\newcommand{\ulol}{{\mathbf{U}(\cO_\ell)}}
\newcommand{\zlol}{{\mathbf{Z}(\cO_\ell)}}
\newcommand{\T}{\mathbf{T}}
\newcommand{\tol}{{\T_\ell}}
\title[Gelfand pairs and Gelfand-Graev modules of $\GL_2(\cO_\ell)$]{On Gelfand pairs and degenerate Gelfand-Graev modules of General Linear groups of degree two over principal ideal local  rings of finite length}
\author{Archita Gupta}
\author{Pooja Singla}
\address{Department of Mathematics and Statistics, IIT Kanpur,  Kanpur 208016, India}
\email{architagup20@iitk.ac.in, psingla@iitk.ac.in}
\keywords{degenerate Gelfand-Graev modules, multiplicity one, Gelfand pair, multiplicity bound, regular representations}
\subjclass[2010]{Primary 20G05; Secondary 20C15, 20G25, 15B33.}
\begin{document}
	\begin{abstract}	
Let $R$ be a principal ideal local ring of finite length with a finite residue field of odd characteristic. Denote by $G(R)$ the general linear group of degree two over $R$, and by $B(R)$ the Borel subgroup of $G(R)$ consisting of upper triangular matrices. In this article, we prove that the pair $(G(R), B(R))$ is a strong Gelfand pair.
We also investigate the decomposition of the degenerate Gelfand-Graev (DGG) modules of 
$G(R)$. It is known that the non-degenerate Gelfand Graev module (also called non-degenerate Whittaker model) of $G(R)$ is multiplicity-free. We characterize the DGG-modules where the multiplicities are independent of the cardinality of the residue field. We provide a complete decomposition of all DGG modules of $G(R)$ for $R$ of length at most four. 
	\end{abstract}
	\maketitle
\section{Introduction}
\noindent Let $\mathbf F$ be a non-archimedean local field, and let $\OO$ denote its ring of integers. The residue field $\mathbf k$ is finite of characteristic $p \geq 3$ and $| \mathbf{k} | =q.$
Let $\mathfrak{p}$ be the unique maximal ideal of $\OO$, and fix a generator $\varpi$ of $\mathfrak{p}.$
Consider $\G$ as a split reductive group scheme defined over $\cO$, and let $\G(\cO)$ represent the set of $\OO$-points of $\G.$ Being maximal compact subgroups, $\G(\cO)$ play a significant role in the representation theory of the groups $\G(\mathbf F).$ For $\ell \geq 1,$ let $\OO_{\ell} := \OO/ \mathfrak{p}^\ell.$ Since the groups $\G(\cO)$ are pro-finite, every complex continuous finite dimensional irreducible representation of $\G(\cO)$ factors through the quotient $\G(\cO_\ell)$ for some $\ell.$ Therefore, it suffices to study the finite-dimensional complex representations of the groups $\G(\cO_\ell)$. 
The study of irreducible representations of $\G(\cO_\ell)$ and their properties is a highly active area of research in mathematics, see \cites{MR4719887, MR4704476, MR4362775,MR4308643, MR3995719, MR3666059, MR3011874, MR2684153} and the references mentioned therein. In this article, we explore a few problems regarding the Gelfand pairs and Gelfand modules of $\G(\cO)$.

Recall that a pair $(G,H)$ is called a Gelfand pair if the induced representation $\ind_H^G(\mathrm{triv}_H)\cong\mathbb{C}[G/H]$ is multiplicity-free. In the study of finite groups, Gelfand pairs are valuable tools used across various branches of mathematics, including number theory (see \cite{MR1074028}), combinatorics (see \cites{MR2869097,MR3971529} ) and Harmonic analysis (see \cites{MR2816419,MR2389056}). 
This concept has been further generalized, leading to the notion of strong Gelfand pairs. A pair 
$(G,H)$ is defined as a strong Gelfand pair if
$\ind_H^G(\theta)$ is multiplicity-free for {\bf every} irreducible representation $\theta$ of $H$. In case of summetric group $S_n$, the pair $(S_n, S_{n-1})$ is known to be a strong Gelfand pair and this property provides a basis of the irreducible representations of $S_n.$ 

Let $\glol$ be the general linear group with entries from the finite ring $\cO_\ell$ and let $\blol$ be the Borel subgroup of $\glol$ consisting of the upper triangular matrices. It is well known that the pair $(\GL_2(\cO_1), \B(\cO_1))$ is a strong Gelfand pair. We extend this result to all $\cO_\ell$ and prove the following: 

\begin{thm}
\label{thm:borel-strong-Gelfand-pair}
    The pair $(\GL_2(\cO_\ell),\blol)$ is a strong Gelfand pair for every $\ell \geq 1$. 
\end{thm}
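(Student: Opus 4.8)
The plan is to reduce \cref{thm:borel-strong-Gelfand-pair}, via the standard reformulation of the strong Gelfand condition together with Clifford theory for the Borel subgroup, to a uniform statement about the restriction of the irreducible representations of $G:=\glol$ to the diagonal torus along their Whittaker eigenspaces. Recall that $(G,\blol)$ is a strong Gelfand pair if and only if $\pi|_{\blol}$ is multiplicity-free for every irreducible $\pi$ of $G$ — equivalently, the convolution algebra of $\blol$-conjugation-invariant functions on $G$ is commutative — so it suffices to prove $\dim\operatorname{Hom}_{\blol}(\pi|_{\blol},\theta)\le 1$ for all irreducible $\pi$ of $G$ and $\theta$ of $\blol$. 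Write $\blol=\U\rtimes\T$ with $\U:=\ulol\cong(\cO_\ell,+)$ and $\T:=\tol\cong\cO_\ell^\times\times\cO_\ell^\times$. Applying Clifford theory to the abelian normal subgroup $\U$: for a character $\psi$ of $\U$, with stabiliser $\T_\psi:=\operatorname{Stab}_\T(\psi)$ and canonical extension $\hat\psi$ of $\psi$ to $\U\T_\psi$ (trivial on $\T_\psi$), every irreducible representation of $\blol$ has the form $\theta_{\psi,\chi}:=\ind_{\U\T_\psi}^{\blol}(\hat\psi\otimes\chi)$ for a linear character $\chi$ of the abelian group $\T_\psi$, and Frobenius reciprocity gives
\[
\dim\operatorname{Hom}_{\blol}(\pi|_{\blol},\theta_{\psi,\chi})=\dim\operatorname{Hom}_{\U\T_\psi}(\pi|_{\U\T_\psi},\hat\psi\otimes\chi)=\bigl[\,\pi[\psi]\,:\,\chi\,\bigr],
\]
the multiplicity of $\chi$ in the $\psi$-eigenspace $\pi[\psi]\subseteq\pi$ of $\pi|_{\U}$, viewed as a $\T_\psi$-module. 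Thus the theorem is equivalent to: \emph{for every irreducible $\pi$ of $G$ and every character $\psi$ of $\U$, the space $\pi[\psi]$ is a multiplicity-free $\T_\psi$-module.}

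A direct computation gives the stabilisers: if $\psi$ has conductor $\wp^{\ell-v}$ (trivial on $\wp^{\ell-v}$ but not on $\wp^{\ell-v-1}$, with $0\le v\le\ell$), then $\T_\psi=\{\operatorname{diag}(a,d):a\equiv d\bmod\wp^{\ell-v}\}$; in particular $\T_\psi=\zlol$ when $v=0$, $\T_\psi=\T$ when $v=\ell$, and $\T_\psi/\zlol\cong 1+\wp^{\ell-v}$ otherwise. When $v=0$ (i.e.\ $\psi$ non-degenerate) the centre $\zlol$ acts on $\pi[\psi]$ through the central character, so multiplicity-freeness is equivalent to $\dim\pi[\psi]\le 1$ — which is uniqueness of the non-degenerate Whittaker model for $\glol$, i.e.\ the known multiplicity-freeness of the non-degenerate Gelfand--Graev module. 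For the remaining $\psi$ I would induct on $\ell$, the case $\ell=1$ being classical. By Clifford theory over $\mathbf N:=\ker(G\to\GL_2(\cO_{\ell-1}))\cong\mathfrak{gl}_2(\rf)$, an irreducible $\pi$ of $G$ lies over a $\GL_2(\rf)$-orbit of characters of $\mathbf N$: if this orbit corresponds to a scalar matrix, then $\pi$ is, up to tensoring by a linear character $\mu\circ\det$, inflated from $\GL_2(\cO_m)$ for some $m<\ell$, so that $\pi|_{\blol}$ is — up to tensoring by the linear character $\mu\circ\det|_{\blol}$ — inflated from $\B(\cO_m)$ and is multiplicity-free by the inductive hypothesis; otherwise the orbit corresponds to a non-scalar, hence automatically regular, matrix $\beta\in\mathfrak{gl}_2(\rf)$, and $\pi=\ind_{\mathbf Z_\ell(\beta)}^{G}\tilde\pi$, where $\mathbf Z_\ell(\beta)$ is the stabiliser of the character $\phi_\beta$ — the preimage under reduction modulo $\wp$ of $\C_{\GL_2(\rf)}(\beta)\cong\rf[\beta]^\times$ — and $\tilde\pi$ is an explicit irreducible representation over $\phi_\beta$ provided by the classification of $\operatorname{Irr}(\glol)$.

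The substantive case is therefore $\pi=\ind_{\mathbf Z_\ell(\beta)}^{G}\tilde\pi$ with $\beta$ regular. For such $\pi$ one computes $\pi[\psi]$ from the Mackey decomposition of $(\ind_{\mathbf Z_\ell(\beta)}^{G}\tilde\pi)|_{\U}$ along the double cosets $\mathbf Z_\ell(\beta)\backslash G/\U$, retaining those cosets whose contribution contains $\psi$, and then one tracks the residual action of $\T_\psi$ (which normalises $\U$) on that contribution; the three types of $\beta$ — split semisimple, non-split semisimple, and ramified (regular with a repeated eigenvalue) — are treated separately, using in each case the explicit description of $\mathbf Z_\ell(\beta)$ in terms of the corresponding quadratic $\cO_\ell$-algebra (split, unramified, or ramified). \textbf{The main obstacle is precisely this last analysis}: controlling the entire $\U$-spectrum of the regular representations together with the $\T_\psi$-action on each eigenspace, uniformly in $\ell$, and ruling out multiplicity $\ge 2$ at the intermediate conductors $0<v<\ell$ — at conductor near $0$ one can still lean on uniqueness of the Whittaker model, and at $v=\ell$ on the structure of the Jacquet module, but the middle range requires the fine structure of $\mathbf Z_\ell(\beta)$ and a careful analysis of the $\U$- and $\T_\psi$-orbits on $\mathbf Z_\ell(\beta)\backslash G$. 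Finally, I would note that a naive Gelfand--Kazhdan argument does not suffice: $(G,\blol)$ is a strong Gelfand pair iff $(G\times\blol,\Delta\blol)$ is a Gelfand pair, which by the Gelfand--Kazhdan criterion would follow from an anti-automorphism $\sigma$ of $G$ with $\sigma(\blol)=\blol$ fixing every $\blol$-conjugacy class, but the natural candidate $\sigma(g)=w_0\,g^{\mathrm t}\,w_0$ with $w_0=\mat{0}{1}{1}{0}$ interchanges the two diagonal entries and hence fixes only the $\blol$-classes whose datum is symmetric under the Weyl group; it yields multiplicity-freeness of $\ind_{\blol}^{G}\theta_{\psi,\chi}$ only for Weyl-invariant $\chi$, which is why the representation-theoretic analysis above is needed.
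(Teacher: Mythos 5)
Your reduction is sound, and it is essentially the same reduction the paper makes: multiplicity-freeness of $\pi|_{\blol}$ for all $\pi\in\mathrm{Irr}(\glol)$ is, via Clifford theory over $\ulol$ and the stabilizer $\T_\psi$ (your $\U\T_\psi$ is exactly the paper's $\ztol\uol$), equivalent to the statement that $\ind_{\ztol\uol}^{\gol}(\tilde\chi,\psi_t)$ is multiplicity-free for every character $(\tilde\chi,\psi_t)$, which is \autoref{thm:multiplicity-one-wtc}. Your treatment of the outlying cases also matches the paper: the non-degenerate conductor is uniqueness of the Whittaker model, the scalar-orbit (non-regular) representations are handled by a determinant twist and induction on $\ell$ (the analogue of \autoref{lem: non-regular constituents-W-type of the matrix A-wtc}), and $\ell=1$ is classical. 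But the proof stops exactly where the real work begins. For regular $\pi$ and intermediate conductors $1\le t\le\ell-2$ you only describe the intended Mackey computation over $\mathbf Z_\ell(\beta)\backslash\gol/\uol$ and then explicitly flag ``ruling out multiplicity $\ge 2$ at the intermediate conductors'' as the main obstacle, without carrying it out. That is not a verification one can wave through: the paper's own Sections 3--4 show the analysis is delicate. For split semisimple constituents one needs the coset restriction of \autoref{cor:coset-form-BGU} together with the injectivity of $\chi_1\chi_2^{-1}$ to show that the two a priori contributing cosets force $\lambda-2\lambda'\equiv 0\pmod\varpi$, a contradiction; for split non-semisimple constituents one needs the coset-collapsing computations (\autoref{lem: wtc SNS-coset equality}, \autoref{lem:coset-representative-form-wtc}, \autoref{lem: ZtU intersection SA}), the multiplicity-freeness of $\tilde\sigma|_{\U_A}$ from \autoref{prop:multiplicity-free-SA-representation}, and, for odd $\ell$ and $t<\lup$, the alternative construction of $\sns$ representations of \autoref{sec: construction-SNS} to see that $\tilde\sigma$ restricted to $\ztol\uol\cap\s_A$ is irreducible. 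None of this (or a substitute for it) appears in your proposal.

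The gap is substantive rather than cosmetic: as \autoref{thm:multiplicity-bounds} and \autoref{cor:multiplicity-bounds-examples} show, the multiplicities of $\sns$ constituents in the ambient DGG modules $V^t_\chi$ genuinely grow with $q$ for $t<\ell-1$, so multiplicity one for the smaller induced modules $\wtc$ is not forced by any soft bound and really does depend on the fine interaction between $\T_\psi$ and the inertia groups $\s_A$ at intermediate conductors. Your closing observation that the naive Gelfand--Kazhdan involution only handles Weyl-invariant data correctly diagnoses why a symmetry argument fails, but it does not supply the missing case analysis; as it stands the proposal is a correct plan whose central step is unproved.
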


Let $\mathbf{P}_n(\Ol)$ be the mirabolic subgroup of $\GL_n(\Ol)$, that is, $\mathbf{P}_n(\Ol)$ denote the subgroup of $\GL_n(\Ol)$ consisting of matrices with the last row $(0,0,\cdots, 0, 1)$. 
For $\ell = 1$, it is well known that the pair $(\GL_n(\cO_1),\mathbf{P}_n(\cO_1))$ is a strong Gelfand pair, see \cite{MR643482}*{Section~13}. Let $\zlol$ be the center of $\glol$. Then  $\blol = \zlol \mathbf{P}_2(\cO_\ell) $. The following is an immediate corollary of \autoref{thm:borel-strong-Gelfand-pair}.  
\begin{corollary}
\label{cor:mira-induction}
 For any irreducible representation $\theta$ of $\mathbf{P}_2(\cO_\ell) $, the representation $\mathrm{Ind}_{\mathbf{P}_2(\cO_\ell) }^{\glol}(\theta)$ is a  multiplicity-free representation of $\glol$.   
\end{corollary}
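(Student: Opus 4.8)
The plan is to deduce this directly from \autoref{thm:borel-strong-Gelfand-pair}, exploiting the decomposition $\blol=\zlol\,\mathbf{P}_2(\cO_\ell)$ recorded above, where $\zlol$ denotes the (finite abelian) center of $\glol$. First I would observe that the product is in fact direct: a scalar matrix lies in $\mathbf{P}_2(\cO_\ell)$ (i.e.\ has last row $(0,1)$) only when it is the identity, so $\zlol\cap\mathbf{P}_2(\cO_\ell)=\{1\}$ and hence $\blol\cong\zlol\times\mathbf{P}_2(\cO_\ell)$. Since $\zlol$ is abelian, every irreducible representation of $\blol$ then has the form $\omega\boxtimes\theta$ with $\omega$ a linear character of $\zlol$ and $\theta$ an irreducible representation of $\mathbf{P}_2(\cO_\ell)$.

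Next, fixing an irreducible $\theta$ of $\mathbf{P}_2(\cO_\ell)$, I would carry out the induction in two stages. Inducing from $\mathbf{P}_2(\cO_\ell)$ to the direct product $\blol=\zlol\times\mathbf{P}_2(\cO_\ell)$ gives
\[
\ind_{\mathbf{P}_2(\cO_\ell)}^{\blol}(\theta)\;\cong\;\mathbb{C}[\zlol]\boxtimes\theta\;\cong\;\bigoplus_{\omega\in\widehat{\zlol}}\omega\boxtimes\theta ,
\]
since $\mathbb{C}[\zlol]=\ind_{\{1\}}^{\zlol}(\mathrm{triv})$ is the regular representation of $\zlol$. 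Applying transitivity of induction and inducing up to $\glol$ then yields
\[
\ind_{\mathbf{P}_2(\cO_\ell)}^{\glol}(\theta)\;\cong\;\bigoplus_{\omega\in\widehat{\zlol}}\ind_{\blol}^{\glol}(\omega\boxtimes\theta) ,
\]
and by \autoref{thm:borel-strong-Gelfand-pair} each summand on the right is multiplicity-free.

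The one genuine point to address — and what I expect to be the main (though brief) obstacle — is that a direct sum of multiplicity-free modules need not itself be multiplicity-free, so one must show that the summands share no irreducible constituent. This I would handle by a central-character argument: since $\zlol$ is central in $\glol$ and contained in $\blol$, induction from $\blol$ preserves the action of $\zlol$, so every irreducible constituent of $\ind_{\blol}^{\glol}(\omega\boxtimes\theta)$ has central character $\omega$. Hence for $\omega\neq\omega'$ the modules $\ind_{\blol}^{\glol}(\omega\boxtimes\theta)$ and $\ind_{\blol}^{\glol}(\omega'\boxtimes\theta)$ are disjoint, so the entire direct sum displayed above — and therefore $\ind_{\mathbf{P}_2(\cO_\ell)}^{\glol}(\theta)$ — is multiplicity-free, as claimed.
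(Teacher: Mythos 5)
Your argument is correct and is exactly the route the paper intends: the paper presents this as an immediate consequence of \autoref{thm:borel-strong-Gelfand-pair} via the factorization $\blol=\zlol\,\mathbf{P}_2(\cO_\ell)$, and you have simply supplied the routine details (direct product structure, induction in stages, and separation of the summands by central character). Nothing is missing.
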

This result is known to be true for $\dim(\theta) = 1$ and   $\dim(\theta) = (q-1)q^{\ell-1}$ by \cite{MR0338274}*{Proposition~1} and \cite{MR4399251}*{Theorem~1.1}, respectively. A proof of \autoref{thm:borel-strong-Gelfand-pair} is included in \autoref{sec:proof-borel-strong-Gelfand-pair}. To prove \autoref{thm:borel-strong-Gelfand-pair}, we first consider a generalization of the well-known Gelfand-Graev (GG) modules (also called  non-degenerate Whittaker models) of $\glol$. These GG-modules of $\GL_n(\cO_\ell)$ were studied in \cites{MR1334228, MR4356279, MR4399251}, and we briefly recall these here for $\glol$. Let $\ulol$ be the group of upper triangular unipotent matrices in $\glol.$ A non-trivial one dimensional representation $\psi\colon\Ol \rightarrow \mathbb{C}^{\times}$ 
such that $\psi|_{\varpi^{\ell -1} \Ol} \neq 1$ is called a primitive character of $\cO_\ell.$ For any primitive character $\psi$, define a one dimensional representation $\psi_t : \ulol\rightarrow \mathbb{C}^{\times}$ by
\begin{equation*}
\psi_t\begin{pmatrix} 1 & u \\ 0 & 1   \end{pmatrix} := \psi(\varpi^{\ell-t} u).
\end{equation*}
Any one dimensional representation of $\ulol$, upto conjugation by $\blol$, is of the form 
$\psi_t$ for some $t \in [0, \ell]$. The character $\psi_t$ for $t = \ell$ is called a non-degenerate character of $\ulol$ and 
$\glol$-representation space $V^\ell := \mathrm{Ind}_{\ulol}^{\glol}(\psi_\ell)$ is called the Gelfand-Graev (GG) module (or non-degenerate Whittaker model) of $\glol$. 
Generalizing this construction, 
$\psi_t$ for $t < \ell$  is called a {\it degenerate character} of $\ulol$ and $\glol$-representation space $V^t := \mathrm{Ind}_{\ulol}^{\glol}(\psi_t)$ for $t < \ell$ is called a degenerate Gelfand-Graev (DGG) module of $\glol$. This is a natural generalization of the well known parallel notions from $\GL_n(\mathbb F_q)$ \cite{MR643482}*{Section~12}. By \cite{MR4399251}, the GG module of $\GL_2(\cO_\ell)$ is known to be multiplicity free. Moreover, it is proved that an irreducible representation of $\GL_2(\cO_\ell)$ is a constituent of the GG module if and only if it is a regular representation of $\glol$, see \autoref{sec:preliminaries} for related definitions. 

The induced representation $\mathrm{Ind}_{\blol}^{\glol}(\theta)$, where $\theta$ is an irreducible representation of $\blol$, is a sub-representation of certain DGG-modules of $\glol.$ Hence, to prove ~\autoref{thm:borel-strong-Gelfand-pair},  we also consider the problem of decomposition of DGG-modules of $\glol$. In particular, we are interested in the multiplicities of their irreducible constituents. For $\ell = 1$, the corresponding decomposition is well known, see \cite{MR0696772}. We will consider $\ell \geq 2$ in this article. To describe our results, we define the following:
\[
a(t, \ell) = \underset{  \rho \in \mathrm{Irr}(\glol)  }{\mathrm{Sup}} \{ m_\rho \mid  \,\, \mathrm{dim}_\mathrm{\mathbb C}(\mathrm{Hom}_{\glol}(V^t, \rho)) = m_\rho \}. 
\]
The number $a(t, \ell)$ gives us the highest possible multiplicity of the irreducible constituents of the DGG-module $V^t$ and will be called its multiplicity bound. We prove the following result regarding $a(t, \ell).$ 
\begin{thm}
\label{thm:multiplicity-bounds}
     The multiplicity bound $a(t, \ell)$ of $V^t$ is independent of the residue field if and only if $t \geq  \ell-1.$
\end{thm}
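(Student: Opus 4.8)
The plan is to analyze the two directions separately, using the description of the constituents of $V^t$ in terms of orbit data on the Lie-algebra side (via the Kirillov-type orbit method / Clifford theory for $\glol$). The key input is that a constituent $\rho$ of $V^t = \mathrm{Ind}_{\ulol}^{\glol}(\psi_t)$ is detected by $\dim_{\mathbb C}\mathrm{Hom}_{\ulol}(\psi_t, \rho)$, and by Frobenius reciprocity this multiplicity can be computed as the dimension of a $\psi_t$-isotypic space. For the "only if" direction I would exhibit, for each $t \le \ell-2$, an explicit irreducible $\rho$ whose multiplicity in $V^t$ genuinely grows with $q$; the natural candidates are representations induced from characters of the diagonal torus $\tol$ (or of an intermediate subgroup between $\ulol$ and $\blol$), for which the multiplicity $\langle \mathrm{Ind}_{\ulol}^{\glol}\psi_t,\rho\rangle$ reduces, via Mackey, to counting solutions in $\cO_\ell$ of a congruence of the shape $\psi(\varpi^{\ell-t}u) = (\text{character applied to a conjugate})$, and the number of such $u$ lying in the relevant coset is a power of $q$ depending on $t$ and $\ell$. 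The cleanest route is to pick $\rho$ appearing in $\mathrm{Ind}_{\blol}^{\glol}(\theta)$ for a suitable $\theta$ and track how the Mackey double-coset sum $\sum_{g} \dim \mathrm{Hom}_{\ulol \cap {}^g\blol}(\psi_t, {}^g\theta)$ contributes a term of size $q^{\,\ell-1-t}$ or similar; this shows $a(t,\ell)$ is a nonconstant polynomial in $q$ once $t \le \ell-2$.

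For the "if" direction, I need to bound $a(\ell,\ell)$ and $a(\ell-1,\ell)$ by a constant independent of $q$. The case $t=\ell$ is already handled: by \cite{MR4399251} the GG-module $V^\ell$ is multiplicity-free, so $a(\ell,\ell)=1$. The substance is $t = \ell-1$. Here I would use Clifford theory with respect to the abelian normal subgroup $\K_{\ell-1} := \ker(\glol \to \GL_2(\cO_{\ell-1}))$: an irreducible $\rho$ of $\glol$ restricts on $\K_{\ell-1}$ to a sum of characters lying in a single $\glol$-orbit, which corresponds (via the trace pairing) to an adjoint orbit in $\mathfrak{gl}_2(\cO_1) = \mathfrak{gl}_2(\mathbf k)$; the multiplicity of $\rho$ in $V^{\ell-1}$ is then governed by the intersection of this orbit with the affine subspace picked out by $\psi_{\ell-1}$ on $\ulol \cap \K_{\ell-1}$, together with a bounded stabilizer contribution. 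The point is that for $t = \ell-1$ the relevant character of $\ulol$ only "sees" the top layer $\K_{\ell-1}$ non-degenerately in the unipotent direction, so the count of orbit-representatives compatible with $\psi_{\ell-1}$ is $\mathrm{O}(1)$ in $q$ — concretely one reduces to counting, inside a single $\GL_2(\mathbf k)$-adjoint orbit of $2\times 2$ matrices over $\mathbf k$, those with a prescribed upper-right entry, and then dividing by an orbit/stabilizer factor; the regular and non-regular cases should each give a small explicit bound (I expect the supremum to be $1$ for $t=\ell$ and a small constant, plausibly $2$, for $t=\ell-1$, though the theorem only asserts $q$-independence).

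The main obstacle will be the "if" direction for $t=\ell-1$: making the Clifford-theoretic reduction precise requires handling both the regular and the non-regular (scalar-plus-nilpotent, and split/non-split semisimple) orbit types uniformly, and in the non-regular case the stabilizer in $\glol$ is larger, so one must check that the extra freedom in choosing the extension of the $\K_{\ell-1}$-character does not reintroduce $q$-dependence into the multiplicity. I would organize this by first reducing, via the orbit method results recalled in \autoref{sec:preliminaries}, to a purely combinatorial statement about $\ulol$-coinvariants of the characters in a fixed $\glol$-orbit, then dispatching the orbit types case by case; the regular case is the easiest and essentially follows the multiplicity-one argument for $V^\ell$ in \cite{MR4399251}, while the non-regular cases need a short separate computation in $\mathfrak{gl}_2(\mathbf k)$. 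Conversely, for the "only if" direction the only real work is to verify that the $q$-growing term in the Mackey sum is not cancelled, which follows because all contributions to $\dim\mathrm{Hom}_{\glol}(V^t,\rho)$ are non-negative.
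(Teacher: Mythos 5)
There is a genuine gap in your ``only if'' direction. The witnesses you propose --- irreducible constituents of $\mathrm{Ind}_{\blol}^{\glol}(\theta)$ for $\theta$ a character of the torus or Borel --- are precisely the split semisimple representations of $\glol$ when the induced representation is irreducible, and these occur in $V^t$ with multiplicity exactly $2$ whenever they occur at all, independently of $q$ \emph{and} of $t$ (\autoref{prop: ss in indzu-G}). So no congruence count along your Mackey sum for such $\rho$ can produce a $q$-growing term; and when $\mathrm{Ind}_{\blol}^{\glol}(\theta)$ is reducible, your double-coset sum computes $\langle V^t,\mathrm{Ind}_{\blol}^{\glol}\theta\rangle$ rather than the multiplicity of any irreducible constituent, with no control over how that total distributes. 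The representations that actually carry the growing multiplicity are the split non-semisimple ones, which are induced from the inertia subgroup $\s_A$, not from $\blol$. The paper does not even compute their multiplicities directly for $t<\ell-1$: it shows the number of \emph{distinct} $\sns$ constituents of $\vtc$ is at most $q^{\ell-2}$ (\autoref{lem: number-of-non-conjugate-sns-matrices} together with the construction of \autoref{sec:preliminaries}), while the number counted \emph{with} multiplicity is $(q-1)q^{\ell-2}$ (\autoref{coro:number-ss-sns-vtc}, a dimension count that itself uses the $\ss$ multiplicity $2$ and \autoref{lem: non-regular constituents}), so by pigeonhole some $\sns$ constituent has multiplicity of order $q$. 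You would need an argument of this kind, or an explicit $\sns$ computation, to close this direction.

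The ``if'' direction for $t=\ell-1$ also does not go through as written. First, the count you describe --- elements of a fixed adjoint orbit in $\mathfrak{gl}_2(\mathbf k)$ with one prescribed off-diagonal entry --- is itself of order $q$, not $\mathrm{O}(1)$: a regular semisimple orbit already contains about $2q$ upper-triangular matrices. The bound $a(\ell-1,\ell)=2$ does not come from this top-layer count but from a Mackey analysis over $\zol\uol\backslash\gol/\s_A$ showing that at most one double coset (for $\sns$) or two (for $\ss$) can contribute, combined, for odd $\ell$, with the multiplicity-freeness of $\tilde\sigma|_{\U_A}$ (\autoref{prop:multiplicity-free-SA-representation}); this is where the real work lies and your outline does not supply it. Second, the non-regular constituents cannot be dispatched by ``a short separate computation in $\mathfrak{gl}_2(\mathbf k)$'': their restriction to $\K^{\ell-1}$ is scalar, so the top layer carries no information about them. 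The paper instead proves $(\vtc)^{\text{non-reg}}\cong \ind_{\Z_{\ell-1}\U_{\ell-1}}^{\G_{\ell-1}}(\bar{\chi}\otimes\psi_t)$ (\autoref{lem: non-regular constituents}); for $t=\ell-1$ this is the non-degenerate Gelfand--Graev module of $\G_{\ell-1}$, whose known multiplicity-freeness is what bounds the non-regular part. This inductive reduction in $\ell$ is missing from your proposal.
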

As mentioned earlier, we have $a(\ell, \ell) = 1$ for all $\ell$. We remark that in view of the above results with $a(t, \ell)$ dependent on the residue field, \autoref{thm:borel-strong-Gelfand-pair} is a somewhat surprising result. 
See \autoref{sec:proof-multiplicity-bounds} for a proof of \autoref{thm:multiplicity-bounds}. In the course of proving the above result, we also obtain the following: 
\begin{thm} 
\label{thm:intermediate-steps-multiplicity-bounds}
\begin{enumerate}
\item For $\ell \geq 1,$ we have $a({\ell-1}, \ell) = 2.$  
\item For any cuspidal representation $\rho$ of $\glol$, we have 
\[
\mathrm{dim}_\mathrm{\mathbb C}(\mathrm{Hom}_{\glol}(V^t, \rho)) = \begin{cases} 
1, & \mathrm{for} \,\, t = \ell;\\ 
0, & \mathrm{for} \,\, t < \ell. 
\end{cases}
\]
\item For any split semisimple  representation $\rho$ of $\glol$, we have $\mathrm{dim}_\mathrm{\mathbb C}(\mathrm{Hom}_{\glol}(V^t, \rho)) = 2.$
\end{enumerate}
\end{thm}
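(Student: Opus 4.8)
The three parts should be attacked in a specific order, since part (3) and the vanishing half of part (2) feed into part (1). The plan is to work throughout with the Mackey/Frobenius description
\[
\dim_{\mathbb C}\mathrm{Hom}_{\glol}(V^t,\rho)=\dim_{\mathbb C}\mathrm{Hom}_{\ulol}(\psi_t,\rho)=\dim \rho^{\ulol,\psi_t},
\]
the space of $\psi_t$-Whittaker vectors in $\rho$. So everything reduces to computing, for each type of irreducible $\rho$ (cuspidal, split semisimple, and the various ramified/non-semisimple types appearing in the Clifford-theory classification recalled in \autoref{sec:preliminaries}), the dimension of this twisted coinvariant space. For part (2), I would use the standard fact that a cuspidal representation of $\glol$ is (up to twist) induced from a character in "general position" of a torus coming from the unramified quadratic extension; its restriction to $\ulol$ decomposes, by Mackey, into a sum over orbits, and one checks that the non-degenerate character $\psi_\ell$ occurs exactly once while every degenerate $\psi_t$ ($t<\ell$) fails to occur at all — the latter because the level of the relevant additive character forces the twisted coinvariants to vanish by a non-trivial-character-summing argument on the appropriate quotient of $\Ol$. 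For part (3), $\rho$ split semisimple means $\rho=\mathrm{Ind}_{\blol}^{\glol}(\chi_1\otimes\chi_2)$ with $\chi_1\neq\chi_2$ (suitably generic); by Mackey with the Bruhat decomposition $\glol=\blol\sqcup\blol w\ulol$ one gets two double cosets, each contributing exactly $1$ to $\dim\mathrm{Hom}_{\ulol}(\psi_t,\rho)$ for every $t$ — the trivial-coset term contributes $1$ because $\psi_t$ restricted to the relevant unipotent piece pairs trivially, and the big-cell term contributes $1$ by a Gauss-sum (Bruhat) computation — giving the clean answer $2$ regardless of $q$.

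For part (1), the equality $a(\ell-1,\ell)=2$, the lower bound $a(\ell-1,\ell)\geq 2$ is immediate from part (3): split semisimple representations exist for $\ell\geq 1$ and contribute multiplicity exactly $2$ in $V^{\ell-1}$. The real content is the upper bound $a(\ell-1,\ell)\leq 2$, i.e. that \emph{no} irreducible $\rho$ has $\dim\rho^{\ulol,\psi_{\ell-1}}\geq 3$. The plan is to run through the classification of $\mathrm{Irr}(\glol)$ type by type: cuspidals give $0$ by part (2); split semisimple give $2$ by part (3); the remaining families (twists of representations inflated from $\GL_2(\cO_{\ell-1})$, ramified-torus/"split non-semisimple" types, and the genuinely new regular representations at level $\ell$) must be bounded by $2$ directly. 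For the inflated representations one can induct on $\ell$: a representation trivial on the level-$(\ell-1)$ congruence subgroup, when restricted to $\ulol$ and paired against $\psi_{\ell-1}$, reduces to a Whittaker-vector count at level $\ell-1$ for the \emph{non-degenerate} character, which is the multiplicity-free (GG-module) case and gives $\leq 1$; tensoring by a character of $\det$ and tracking orbits at most doubles this. For the remaining types one uses the explicit models (Heisenberg/Weil-type constructions over $\cO_\ell$) to count twisted coinvariants.

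The main obstacle is precisely this last uniform upper bound in part (1): controlling $\dim\rho^{\ulol,\psi_{\ell-1}}$ for \emph{every} irreducible type simultaneously, since the dimensions of these representations grow like powers of $q$ and a crude bound on $\dim\rho^{\ulol,\psi_{\ell-1}}$ in terms of $\dim\rho$ is far too weak. The way through is to observe that $\psi_{\ell-1}$ is a character of level $\ell-1$ on $\ulol$, so $\rho^{\ulol,\psi_{\ell-1}}$ only sees $\rho$ restricted to the subgroup $\ulol\cap K_{\ell-1}$-type data, and Clifford theory with respect to the abelian quotient $K_{\ell-1}/K_\ell$ pins the relevant isotypic piece down to a manageable space on which $\blol$-conjugacy acts with few orbits; counting those orbits and the Whittaker dimension on each (an elementary but slightly delicate stabilizer computation) yields the bound $2$. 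Once parts (1)--(3) are in hand, they are exactly the inputs needed to prove \autoref{thm:multiplicity-bounds} in \autoref{sec:proof-multiplicity-bounds}.
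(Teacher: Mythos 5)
Your overall architecture -- split $V^t=\oplus_\chi V^t_\chi$, treat each type of irreducible separately, and handle the non-regular constituents by descent to $\G_{\ell-1}$ where $\psi_{\ell-1}$ becomes the non-degenerate character -- matches the paper's route (\autoref{lem: non-regular constituents}, \autoref{cor:cuspidal-multiplicity-zero=vtc}, \autoref{prop: ss in indzu-G}, \autoref{cor: SNS in induction Z}). But there are two genuine gaps. First, in part (3) your count of double cosets is wrong for $\ell\geq 2$: $\bol\backslash\gol/\zol\uol$ is \emph{not} the two-cell Bruhat decomposition; by \autoref{info about B_G_U} it is contained in $\Delta=\{\left(\begin{smallmatrix}1&0\\\varpi^j z&1\end{smallmatrix}\right),\left(\begin{smallmatrix}0&1\\1&0\end{smallmatrix}\right)\}$ with $1\leq j\leq\ell$ and $z\in\cO_\ell^\times$, so there are on the order of $\ell(q-1)$ cosets. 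The actual content of the upper bound $\leq 2$ (\autoref{lem: ss multiplicity less than 2}) is that among all these cosets at most one besides the antidiagonal one can contribute, which is proved by evaluating both characters on elements $X_k\in(\zol\uol)^{g_k}\cap\bol$ and using injectivity of $\chi_1\chi_2^{-1}$ to force two contributing cosets to coincide via \autoref{info about B_G_U}(2). Your "two cosets, each contributing $1$" skips exactly this step. (Also, the answer $2$ cannot hold "for every $t$": at $t=\ell$ the GG module is multiplicity-free, so the two contributing cosets must collapse to one; the paper's standing assumption there is $t<\ell$.)

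Second, and more seriously, the crux of part (1) is the bound $\leq 1$ for split non-semisimple constituents of $V^{\ell-1}_\chi$, and your proposal does not actually prove it. Clifford theory at the last congruence level $\K^{\ell-1}/\K^\ell$ only identifies the orbit of characters $\psi_x$ that $\rho$ lies over; it does not control the multiplicity. The paper's argument (Case 1 of \autoref{cor: SNS in induction Z}) has to descend to $\K^{\lup}$, reduce the double cosets $\zol\uol\backslash\gol/\s_A$ to diagonal representatives $g_w$ via \autoref{lem:Z Kl2 double cosets} and \autoref{conditions with Kl2}, prove that at most one $g_w$ gives a nonzero intertwiner, and -- for odd $\ell$ -- additionally show $\langle\chi\otimes\psi_{\ell-1}^{g_w},\tilde\sigma\rangle_{(\zol\uol)^{g_w}\cap\s_A}\leq 1$, which rests on the multiplicity-freeness of $\tilde\sigma|_{\U_A}$ (\autoref{prop:multiplicity-free-SA-representation}, itself requiring the double-coset computation of \autoref{lem: odd case t=l-1}). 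Your "elementary but slightly delicate stabilizer computation" is precisely this several-page argument, and the mechanism you name for it is not the one that works. On the positive side, your observation that $\psi_{\ell-1}$ descends to the non-degenerate character of $\U_{\ell-1}$, so that non-regular constituents are governed by the level-$(\ell-1)$ GG module, is correct and is exactly \autoref{lem: non-regular constituents} (though twisting by $\mu\circ\det$ does not double anything, since $\det$ is trivial on $\ulol$); and the paper's proof of the cuspidal vanishing in part (2) is simpler than your non-split-torus model: by \autoref{type of the matrix A} every $\psi_X$ occurring in $\mathrm{Res}_{\K^{\lup}}V^t_\chi$ has $X$ with lower-left entry $\varpi^{\ell-t}$, which for $t<\ell$ is non-cuspidal mod $\varpi$.
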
 
Hence, if the multiplicity bound of $V^t$ satisfies $a(t, \ell) > 2,$ then there exists a split non-semisimple irreducible representation $\rho$ of $\glol$ such that $\mathrm{Hom}_{\glol}(V^t, \rho) = a(t, \ell).$ We remark that we have not bee able to decompose $V^t$ completely into its irreducible constituents. 
At last, we include a complete decomposition of the DGG-modules of  $\GL_2(\cO_2)$, $\GL_2(\cO_3)$ and $\GL_2(\cO_4)$ for $t \leq \ell-1$ into their irreducible constituents in \autoref{sec:examples}.  
This, in particular, gives the structure of the endomorphism algebra of $V^t.$ Note that  $V^t \cong \oplus_{\chi \in \widehat{\zlol}} \vtc$, where $\vtc =  \ind_{\zlol \ulol}^{\glol} (\chi \otimes \psi_t).$ 
The structure of the endomorphism algebra of $V_t$ is then obtained from  $\mathrm{End}_{\glol}(V^t) \cong \prod_{\chi \in \widehat{\zlol}} \mathrm{End}_{\glol}(\vtc).$ Therefore, it suffices to understand $\mathrm{End}_{\glol}(\vtc)$ for $\chi \in \widehat{\zlol}$.

\begin{thm}
\label{thm:structure-endomorphism-algebra}
Let $\chi \in \widehat{\zlol}$ and $\bar{\chi}$ be a character of $\Z(\cO_{\ell-1})$ such that $\bar{\chi} = \chi \circ \chi_0^2$ for some $\chi_0 \in \widehat{\zlol}$. For $\ell \leq 4$, the endomorphism algebras $\mathrm{End}_{\glol}(\vtc)$  are given as follows:
    \begin{enumerate} 
      \item For $\ell = 1$, 
    \begin{itemize}
        \item[(a)]   
        $\mathrm{End}_{\GL_2(\co_1)}(V^0_\chi) \cong 
        \begin{cases}
          {(M_2(\mathbb{C}))}^{\oplus \frac{1}{2}{(q-3)} } \oplus {\mathbb{C}}^{\oplus 4} & \chi \in (\mathbb{F}_q^\times)^2;\\
            {(M_2(\mathbb{C}))}^{\oplus \frac{1}{2}{(q-1)}} &  \chi \in \mathbb{F}_q \setminus (\mathbb{F}_q^\times)^2. 
        \end{cases}$
        
         \item[(b)] 
          $\mathrm{End}_{\GL_2(\co_1)}(V^1_\chi) \cong {\mathbb{C}}^{\oplus q(q-1)}.$
        \end{itemize}
    \item For $\ell = 2$, 
    \begin{itemize}
        \item[(a)] $\mathrm{End}_{\GL_2(\co_2)}(V^0_\chi) \cong \mathrm{End}_{\GL_2(\co_1)}(V^0_{\bar{\chi}})\oplus {(M_2(\mathbb{C}))}^{\oplus \frac{1}{2}{(q-1)}^2 }\oplus {(M_{q-1}(\mathbb{C}))}. $

        \item[(b)] $\mathrm{End}_{\GL_2(\co_2)}(V^1_\chi) \cong \mathrm{End}_{\GL_2(\co_1)}(V^1_{\bar{\chi}})\oplus {(M_2(\mathbb{C}))}^{\oplus \frac{1}{2}{q(q-1)}^2 } \oplus {(\mathbb{C})}^{q-1}.$

        \item[(c)] $\mathrm{End}_{\GL_2(\co_2)}(V^2_\chi) \cong {\mathbb{C}}^{\oplus q^3(q-1)}.$

    \end{itemize}
    
    \item  For $\ell = 3$,
     \begin{itemize}
    \item[(a)] $\mathrm{End}_{\GL_2(\co_3)}(V^0_\chi) \cong  \mathrm{End}_{\GL_2(\co_2)}(V^0_{\bar{\chi}})\oplus {(M_2(\mathbb{C}))}^{\oplus \frac{1}{2}{q{(q-1)}^2} }\oplus {(M_{q-1}(\mathbb{C}))} \oplus {(M_{2(q-1)}(\mathbb{C}))}^{\oplus \frac{q-1}{2}}.$
    
    \item[(b)] $\mathrm{End}_{\GL_2(\co_3)}(V^1_\chi) \cong 
 \mathrm{End}_{\GL_2(\co_2)}(V^1_{\bar{\chi}})\oplus {(M_2(\mathbb{C}))}^{\frac{1}{2}{q(q-1)}^2}\oplus{(M_{q-2}(\mathbb{C}))}^{\oplus \frac{q-1}{2} } \oplus M_{q-1}(\mathbb{C})\oplus \newline {(M_{q}(\mathbb{C}))}^{\oplus \frac{q-1}{2} }.$
     
    \item[(c)] $\mathrm{End}_{\GL_2(\co_3)}(V^2_\chi) \cong \mathrm{End}_{\GL_2(\co_2)}(V^2_{\bar{\chi}})\oplus {(M_2(\mathbb{C}))}^{\oplus \frac{1}{2}{q(q-1)}^2 } \oplus {(\mathbb{C})}^{q(q-1)}.$

    \item[(d)] $\mathrm{End}_{\GL_2(\co_3)}(V^3_\chi) \cong {\mathbb{C}}^{\oplus q^5(q-1)}.$

    \end{itemize}
    \item For $\ell = 4$,
    \begin{itemize}
        \item[(a)] $\mathrm{End}_{\GL_2(\co_4)}(V^0_\chi) \cong 
        \mathrm{End}_{\GL_2(\co_3)}(V^0_{\bar{\chi}})\oplus {(M_2(\mathbb{C}))}^{\oplus \frac{1}{2}{q^2{(q-1)}}^2 } \oplus {(M_{2(q-1)}(\mathbb{C}))}^{\oplus {\frac{q-1}{2}}q}\oplus {(M_{q^2-q}(\mathbb{C}))}.$
        \item[(b)] $\mathrm{End}_{\GL_2(\co_4)}(V^1_\chi) \cong \mathrm{End}_{\GL_2(\co_3)}(V^1_{\bar{\chi}}) \oplus {(M_2(\mathbb{C}))}^{\frac{1}{2}{q^2(q-1)}^2}\oplus {(M_{q}(\mathbb{C}))}^{\oplus q-1} \oplus {(M_{2(q-1)}(\mathbb{C}))}^{\oplus \frac{q-1}{2}q}.$
    
      \item[(c)] $\mathrm{End}_{\GL_2(\co_4)}(V^2_\chi) \cong  
 \mathrm{End}_{\GL_2(\co_3)}(V^2_{\bar{\chi}}) \oplus  {(M_2(\mathbb{C}))}^{\frac{1}{2}{q^2(q-1)}^2}\oplus {(M_{q-1}(\mathbb{C}))}^{\oplus q} \oplus {(M_{q-2}(\mathbb{C}))}^{\oplus \frac{q-1}{2}q} \oplus {(M_{q}(\mathbb{C}))}^{\oplus \frac{q-1}{2}q}.$
    
    \item[(d)] $\mathrm{End}_{\GL_2(\co_4)}(V^3_\chi) \cong \mathrm{End}_{\GL_2(\co_3)}(V^3_{\bar{\chi}}) \oplus {(M_2(\mathbb{C}))}^{\oplus \frac{1}{2}{(q-1)}^2 q^2} \oplus {(\mathbb{C})}^{\oplus {q^2(q-1)}}.$

    \item[(e)] $\mathrm{End}_{\GL_2(\co_4)}(V^4_\chi) \cong {\mathbb{C}}^{\oplus q^7(q-1)}.$
    \end{itemize}
    \end{enumerate} 
\end{thm}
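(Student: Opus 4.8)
The plan is to identify, for every irreducible $\rho\in\mathrm{Irr}(\glol)$, the multiplicity $m_\rho=\dim_{\mathbb C}\mathrm{Hom}_{\glol}(\vtc,\rho)$, since $\mathrm{End}_{\glol}(\vtc)\cong\prod_{\rho}M_{m_\rho}(\mathbb C)$ and the asserted decomposition is just a bookkeeping of the multiset $\{m_\rho\}$. By Frobenius reciprocity $m_\rho=\dim_{\mathbb C}\mathrm{Hom}_{\zlol\ulol}(\chi\otimes\psi_t,\rho|_{\zlol\ulol})$, so everything reduces to restricting the irreducible representations of $\glol$ to $\zlol\ulol$ and counting occurrences of $\chi\otimes\psi_t$; as a global consistency check I would also use the Mackey formula $\dim_{\mathbb C}\mathrm{End}_{\glol}(\vtc)=\sum_{g}\dim_{\mathbb C}\mathrm{Hom}_{\zlol\ulol\cap g(\zlol\ulol)g^{-1}}(\chi\otimes\psi_t,{}^{g}(\chi\otimes\psi_t))$, the sum running over representatives of $\zlol\ulol\backslash\glol/\zlol\ulol$ and being evaluable from the Bruhat-type decomposition of $\glol$. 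Throughout I would organize $\mathrm{Irr}(\glol)$ by the congruence filtration $K_j=\ker(\glol\to\GL_2(\cO_j))$ and invoke the classification of $\mathrm{Irr}(\glol)$ available in the literature for $\ell\le4$.

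First I would establish the recursion responsible for the summand $\mathrm{End}_{\GL_2(\cO_{\ell-1})}(V^t_{\bar\chi})$. For $t\le\ell-1$ one has $\psi_t\mat{1}{\varpi^{\ell-1}v}{0}{1}=\psi(\varpi^{2\ell-1-t}v)=1$, so $\chi\otimes\psi_t$ is trivial on $K_{\ell-1}\cap\zlol\ulol$; hence the constituents of $\vtc$ inflated from $\GL_2(\cO_{\ell-1})$ are exactly the inflations of the constituents of the corresponding degenerate Gelfand--Graev module one level down, the shift from $\chi$ to $\bar\chi$ being accounted for by the twist by $\chi_0\circ\det$ that matches the two parametrizations (one checks this twist induces an isomorphism of the relevant induced modules). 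This produces the displayed $\oplus\,\mathrm{End}_{\GL_2(\cO_{\ell-1})}(V^t_{\bar\chi})$ term; it also explains why $V^\ell_\chi$ has no such summand, since for $t=\ell$ the character $\psi_\ell$ is nontrivial on $K_{\ell-1}\cap\ulol$ by primitivity.

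The heart of the argument will be the analysis of the remaining constituents, those nontrivial on $K_{\ell-1}$. As $K_{\ell-1}$ is abelian with character group $\widehat{K_{\ell-1}}\cong\M_2(\mathbb F_q)$ (via the residual trace pairing) on which $\glol$ acts through $\GL_2(\mathbb F_q)$-conjugation, Clifford theory writes each such $\rho$ as $\ind_{\mathrm{Stab}(\lambda)}^{\glol}\tilde\sigma$ for a character $\lambda\leftrightarrow A\in\M_2(\mathbb F_q)$ and an extension $\tilde\sigma$ of it, where $A$ falls into one of the orbit types: nonzero scalar, split regular semisimple, non-split regular semisimple, or scalar-plus-regular-nilpotent. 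In the two semisimple cases $\mathrm{Stab}(\lambda)$ covers a split or non-split torus in $\GL_2(\cO_{\ell-1})$ and the representations, being regular, are already controlled by \autoref{thm:intermediate-steps-multiplicity-bounds}: cuspidal ones occur only for $t=\ell$ with $m_\rho=1$, and split semisimple ones occur with $m_\rho=2$ for every $t$. In the scalar and scalar-plus-nilpotent cases $\mathrm{Stab}(\lambda)$ is a mirabolic-type subgroup, and one must build $\tilde\sigma$ explicitly down the remaining chain $K_1\supset\cdots\supset K_{\ell-1}$ — for $\ell\le4$ this recursion has depth at most three and can be done by hand — and then restrict the resulting induced model to $\zlol\ulol$ by Mackey and evaluate the ensuing character sums over $\cO_\ell$ (Gauss sums, together with Kloosterman-type sums in the non-split case, all of which evaluate in closed form at these levels). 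Reading off the block sizes then yields the matrix algebras $M_{q-2}(\mathbb C)$, $M_{q-1}(\mathbb C)$, $M_{q}(\mathbb C)$, $M_{2(q-1)}(\mathbb C)$, $M_{q^2-q}(\mathbb C),\dots$ with the multiplicities in the statement; for $\ell=1$ the dichotomy on whether $\chi$ is a square enters exactly through which split principal series with central character $\chi$ remain irreducible.

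Finally, to assemble, for each $\ell\in\{1,2,3,4\}$ and $t\le\ell$ I would collect the three kinds of blocks — the recursive summand from level $\ell-1$, the $M_2(\mathbb C)$'s and $\mathbb C$'s coming from semisimple and cuspidal representations, and the remaining matrix blocks from the split non-semisimple families — and verify the total against the double-coset count of $\dim_{\mathbb C}\mathrm{End}_{\glol}(\vtc)$ mentioned above. The main obstacle is the scalar-plus-nilpotent (split non-semisimple) case: unlike the regular semisimple and cuspidal families it admits no clean uniform description, so one has to pin down its structure along the congruence filtration explicitly at $\ell=3$ and $\ell=4$ and push through the associated Gauss/Kloosterman sum evaluations while tracking which of these representations are regular — this is also precisely the point where the method stops short of a closed formula valid for all $\ell$.
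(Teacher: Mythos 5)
Your overall architecture matches the paper's: the recursive summand $\mathrm{End}_{\GL_2(\cO_{\ell-1})}(V^t_{\bar\chi})$ accounts for the non-regular constituents, cuspidal constituents vanish for $t<\ell$ (\autoref{cor:cuspidal-multiplicity-zero=vtc}), split semisimple constituents give the $M_2(\mathbb C)$ blocks with multiplicity exactly two (\autoref{prop: ss in indzu-G}), and everything else comes from split non-semisimple constituents. One correction to your justification of the recursion: for $t\le \ell-1$ only the $\psi_t$-part of $\chi\otimes\psi_t$ is trivial on $\K^{\ell-1}\cap\zlol\ulol$; the central character $\chi$ need not be trivial on $1+\varpi^{\ell-1}\cO_\ell$, so the non-regular constituents are in general not inflations from $\GL_2(\cO_{\ell-1})$ but twists of inflations by a determinant character $\delta_A$ (this is exactly \autoref{lem: non-regular constituents}, and it is what forces $\bar\chi=\chi\circ\chi_0^2$ rather than $\bar\chi=\chi$). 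You mention the twist, but your stated reason (triviality on $\K^{\ell-1}\cap\zlol\ulol$) would not produce it and is false as written.

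The more serious problem is that the entire content of the theorem beyond the recursion and the $M_2(\mathbb C)$ blocks is the explicit list of block sizes $M_{q-2}$, $M_{q-1}$, $M_q$, $M_{2(q-1)}$, $M_{q^2-q}$ with their exact multiplicities, and all of these come from the split non-semisimple constituents. You correctly identify this as the hard case, but then only assert that ``reading off the block sizes yields the matrix algebras with the multiplicities in the statement'' --- that is the conclusion, not an argument. The paper devotes \cref{lem: o2 SNS complete decomposition,lem:t=0 o3 sns,lem:t=1 o3 sns,lem:t=2 o3 sns,lem:t=3 o4 sns,lem: o4 SNS complete decomposition} to it: one fixes the $\sns$ matrix $A=\left(\begin{smallmatrix}\lambda/2 & b\\ \varpi^{\ell-t}&\lambda/2\end{smallmatrix}\right)$, computes $\tilde\sigma|_{\uol}$ (or $\tilde\sigma|_{\U(\varpi\cO_\ell)}$) explicitly from the construction in \autoref{sec: construction-SNS}, and reduces each multiplicity to counting solutions of equations of the form $j=\alpha-z^2$ over the residue field; the trichotomy $\alpha=0$ / nonzero square / nonzero non-square is precisely what produces $q-1$, $q-2$, $q$, $2(q-1)$, $q^2-q$, and the totals are then checked against $n_{\sns}(\vtc)=q^{\ell-2}(q-1)$ from \autoref{coro:number-ss-sns-vtc}. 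Your proposal instead appeals to ``Gauss sums, together with Kloosterman-type sums in the non-split case,'' but no Kloosterman sums arise here (the non-split, i.e.\ cuspidal, constituents do not occur at all for $t<\ell$), and the relevant evaluations are quadratic-residue counts rather than Gauss sums. Until this computation is actually carried out, the specific numbers asserted in the theorem remain unproved.
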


 See \autoref{sec:examples} for a proof of the above result. The following result is obtained directly from \autoref{thm:structure-endomorphism-algebra}. 
\begin{corollary}
\label{cor:multiplicity-bounds-examples}
For $(t,\ell)$ such $(0,2) \leq (t, \ell) \leq (4,4)$ and $t < \ell-1$, the multiplicity bounds $a(t, \ell)$ are given as follows:   
\begin{center}
    \begin{tabular}
    {|c||c||c|c||c|c|c|c|c|c|}
    \hline
    ${(t,\ell)}$ & $(0,2)$ & $(1,2)$ & $(0,3)$ & $(1,3)$ & $(2,3)$ & $(0,4)$ & $(1,4)$ & $(2,4)$ & $(3,4)$\\
    \hline 
    $a(t, \ell)$ & $(q-1)$ & $2$ & $2(q-1)$ & $q$ &  $q^2-q$ &  $2$ & $2(q-1)$ & $q$ & $2$\\
    \hline 
    \end{tabular}
    \end{center} 

\end{corollary}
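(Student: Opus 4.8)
The plan is to read $a(t,\ell)$ directly off the endomorphism algebras computed in \autoref{thm:structure-endomorphism-algebra}, via the standard dictionary between multiplicities and matrix blocks. Recall that for a finite group $G$ and a finite-dimensional complex representation $W\cong\bigoplus_i m_i\rho_i$ with the $\rho_i\in\mathrm{Irr}(G)$ pairwise non-isomorphic, Artin--Wedderburn gives $\mathrm{End}_G(W)\cong\prod_i M_{m_i}(\mathbb{C})$, so that $\mathrm{Sup}_{\rho\in\mathrm{Irr}(G)}\dim_{\mathbb{C}}\mathrm{Hom}_G(W,\rho)$ is precisely the largest $n$ for which $M_n(\mathbb{C})$ occurs as a matrix factor of $\mathrm{End}_G(W)$. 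Applying this with $W=V^t$ and using the isomorphism $\mathrm{End}_{\glol}(V^t)\cong\prod_{\chi\in\widehat{\zlol}}\mathrm{End}_{\glol}(\vtc)$ recorded in the introduction --- valid because the summands $\vtc$ have pairwise distinct central characters and hence no common constituents --- reduces the claim to determining, for each $\chi\in\widehat{\zlol}$, the largest matrix block occurring in $\mathrm{End}_{\glol}(\vtc)$, and then maximising over $\chi$.

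\noindent\textbf{Unwinding the recursion.}
Next I would unwind the recursive description in \autoref{thm:structure-endomorphism-algebra}: for fixed $t$ it writes $\mathrm{End}_{\GL_2(\cO_\ell)}(\vtc)$ as the direct sum of $\mathrm{End}_{\GL_2(\cO_{\ell-1})}(V^t_{\bar\chi})$ and a finite, $\chi$-independent list of matrix algebras $M_n(\mathbb{C})^{\oplus(\cdot)}$, and iterating this down to level $\ell'=\max(t,1)$. At that bottom level the algebra is $\mathbb{C}^{\oplus(\cdot)}$ when $t\geq 1$, since there $V^t$ is the multiplicity-free non-degenerate Gelfand--Graev module, and, when $t=0$, involves only $M_1(\mathbb{C})$ and $M_2(\mathbb{C})$ by part~(1)(a). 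Thus the list of matrix factors of $\mathrm{End}_{\glol}(\vtc)$ is the union of those displayed at the levels $t<\ell'\leq\ell$ together with the bottom-level blocks, and the latter never exceed size $2$; on the other hand every level $\ell'\geq 2$ with $t<\ell'$ (in particular $\ell'=\ell$, as $t<\ell$) contributes an $M_2(\mathbb{C})^{\oplus(\cdot)}$ of positive multiplicity, which is where $q\geq 3$ is used. Consequently, for $\ell\geq 2$,
\[
a(t,\ell)\;=\;\max\bigl\{\,n\ :\ M_n(\mathbb{C})\text{ appears among the matrix factors displayed at some level }\ell'\text{ with }t<\ell'\leq\ell\text{ in \autoref{thm:structure-endomorphism-algebra}}\,\bigr\}.
\]

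\noindent\textbf{Tabulation.}
Finally I would compute this maximum for each $(t,\ell)$ in the stated range by inspecting parts~(2)--(4) of \autoref{thm:structure-endomorphism-algebra} and comparing the block sizes, using $q\geq 3$ so that $q-2\leq q-1\leq q\leq 2(q-1)\leq q^2-q$. When $t=\ell-1$ the only matrix factors introduced at any level are $M_1(\mathbb{C})$ and $M_2(\mathbb{C})$, which gives $a(\ell-1,\ell)=2$ in agreement with \autoref{thm:intermediate-steps-multiplicity-bounds}(1); for the remaining pairs one records the largest block size appearing across the levels $t<\ell'\leq\ell$. Assembling these values produces the table.

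\noindent\textbf{Expected difficulty.}
I do not expect a genuine conceptual obstacle; the content is the level-by-level bookkeeping of block sizes in \autoref{thm:structure-endomorphism-algebra}. The one point requiring a moment's care is the justification that the recursion may be truncated --- that the bottom-level algebra, for every central character reached at the bottom of the recursion, contains no matrix block of size larger than $2$ and therefore never affects the maximum when $\ell\geq 2$ --- which follows at once from part~(1).
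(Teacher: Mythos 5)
Your proposal is correct and is essentially the paper's own argument: the paper gives no proof beyond the remark that the corollary "is obtained directly from \autoref{thm:structure-endomorphism-algebra}", and your Artin--Wedderburn dictionary plus the unwinding of the recursion (with the observation that the bottom level contributes no block larger than $M_2(\mathbb{C})$) is exactly the intended bookkeeping. One remark worth recording: carried out faithfully, your method yields $a(2,3)=2$ (since $t=\ell-1$ there, in agreement with \autoref{thm:intermediate-steps-multiplicity-bounds}(1) and with part (3)(c) of \autoref{thm:structure-endomorphism-algebra}, whose largest block is $M_2(\mathbb{C})$) and $a(0,4)=q^2-q$ (from the $M_{q^2-q}(\mathbb{C})$ factor in part (4)(a)); the printed table appears to have these two entries transposed, as the stated values would also contradict \autoref{thm:multiplicity-bounds}. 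All other entries check out against your procedure.
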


 At last, we remark that the  questions regarding the decomposition of DGG modules of  $\GL_n(\cO_\ell)$ for $n \geq 3$  are largely open. For related results, see \cites{MR2504482, MR2597022, MR3119211}.

\section{Preliminaries}
\label{sec:preliminaries}
In this section, we set up the notation and briefly recall the construction of the irreducible representations of $\GL_2(\cO_\ell)$ from \cite{MR3737836}.
We will also discuss alternative constructions of some representations of $\GL_2(\cO_\ell)$. 

Throughout this article, we consider $\G = \GL_2$. Let $\B$
be the Borel subgroup of $\G$ consisting of all upper triangular matrices, and let $\U$ and $\T$ denote the unipotent radical and torus of $\B$, respectively. The center of $\G$ will be denoted by $\Z$ and the Lie algebra of $\G$ will be denoted by $\g$. For a group scheme $\H$ defined over $\cO$, we shall denote $\H(\co_\ell)$ by $\H_\ell$. In this article, we will always consider the complex representations of $\H_\ell$. For a group $G$, the set of all in-equivalent irreducible representations of $G$ will be denoted by $\mathrm{Irr}(G).$ For a subgroup $H$ of $G$ and a representation $\phi$ of $H$, the set $\{\rho \in \mathrm{Irr}(G) \mid \langle \rho|_{H}, \phi \rangle \neq 0 \}$ will be denoted by $\mathrm{Irr}(G \mid \phi).$ For any representation $\rho$ of $G$ such that $\rho \in \mathrm{Irr}(G \mid \phi),$ we say $\rho$ lies above $\phi$. In this article, by a character of a group $G$, we always mean a one-dimensional representation of $G$. For an abelian group $A$, the set $\mathrm{Irr}(A)$ is also denoted by $\widehat{A}$.

For $ i \leq \ell$ and the natural projection maps  $\rho_{i}\colon \gol  \rightarrow \mathrm{\G}_i,$  let $\K^{i} =  \ker (\rho_{i}) $ be the $i$-th congruence subgroups of $\gol.$  For $i \geq \ell/2,$  the group  $\K^i $ is isomorphic to the abelian additive subgroup $\M_2(\cO_{\ell-i}).$ Let $\psi \colon \cO_\ell  \rightarrow \mathbb {C}^\times$ be a fixed primitive one dimensional representation of $\cO_\ell$. 
 For any $i \leq \ell/2$ and  $x \in \M_2(\cO_i),$ let $\hat{x} \in \M_2(\cO_\ell)$ be an arbitrary lift of $x$ satisfying $\rho_{i}(\hat{x}) = x.$ Define $\psi_x: I + \varpi^{\ell-i} \M_2(\cO_\ell) \rightarrow \mathbb C^\times$ by 
 \[ 
 \psi_x(I + \varpi^{\ell-i} y) = \psi(\varpi^{\ell-i}\mathrm{tr}(\hat{x}y)),
 \]
for all $I + \varpi^{\ell-i} y \in \K^{\ell-i}.$  Then $\psi_x$ is a well defined one dimensional representation of $\K^{\ell-i} .$ Further, the following duality for abelian groups $\K^{i}$ and $\M_2(\cO_{\ell-i})$ holds for $i \geq \ell/2$. 
\begin{equation*}
\label{eq:duality}
\M_2(\cO_{\ell-i}) \cong \widehat {\K^{i}};\,\, x \mapsto \psi_x,\,\, \mathrm{where} \,\, \psi_x(y) = \psi(\varpi^{\ell-i}\mathrm{tr}(\hat{x}y))  \,\, \forall \,\, y \in \K^{i}. 
\end{equation*}

We say a one dimensional representation $\psi_x \in \widehat{\K^{i} }$ for $i \geq \ell/2$ is regular if and only if $x \in M_2(\cO_{\ell-i})$ is a regular matrix. It is well known that for $i \geq \ell/2$, the representation $\psi_x \in \widehat{\K^{i}}$ is regular if and only if $\psi_x|_{ \K^{\ell-1} }$ is regular.
We recall the following definition of types of regular matrices $x\in \g(\cO_\ell)$:
\begin{definition}
    A regular matrix $x\in \g(\cO_\ell)$ is called cuspidal (resp. split semisimple and split non-semisimple) if the characteristic polynomial of the image of $x$ in $\g({\mathbb{F}_q})$ under the natural projection map has no roots (resp. two distinct roots and a repeated root) in $\mathbb{F}_q$.
\end{definition}
An irreducible representation $\rho$ of $\gol$ is called regular (cuspidal, split semisimple, split non-semisimple) if the orbit of its restriction to $\K^{\ell-1} $ consists of one- dimensional representations $\psi_x$ for regular (cuspidal, split semisimple, split non-semisimple) $x$. 
We shall say a representation (matrix) is $\IR$, $\SS$, or $\NS$  if it is cuspidal, split semisimple, or split non-semisimple representation (matrix) respectively.
For $\gol,$ these are precisely the regular representations (matrices). Note that the regular representations (even upto twist) of $\gol$ can not be obtained  from $\G_{\ell-1}.$

 Denote $\lfloor \ell/2 \rfloor$ by $\ldown$ and  $\lceil \ell/2 \rceil$ by $\lup$. We now summarize very briefly the construction of regular representations of $\gol $ from \cite{MR3737836} with emphasis on the statements that we require in this article.

\subsection{Construction of regular representations of $\gol$ for $\ell $ even } 
\label{sec:E.construction}
Let $\psi_x \in \widehat{\K^{\ell/2}}$ be a regular one dimensional representation of $\K^{\ell/2}$ for $x \in \g(\cO_{\ell/2}).$ Then the following gives the construction in this case. Let $\s_x = \{ g \in \gol \mid \psi_x^g \cong \psi_x  \}  $ be the inertia group of $\psi_x$ in $\gol.$ 
 Then $\s_x = \C_{\gol}(\tilde{x})       \K^{\ell/2} ,$ where $\tilde{x} \in \cO_\ell$ is any lift of $x$ to $\g(\cO_\ell)$ and   $\C_{\gol}(\tilde{x})$ is the centralizer of $\tilde{x}$ in $\gol$.

  Let $\rho \in \mathrm{Irr} \left( \gol  \mid \psi_x     \right)$ be a regular representation of $\gol,$ then there exists an extension $\widetilde{\psi_x}$ of $\psi_x$ to $\s_x$ such that $\rho \cong \mathrm{Ind}_{\s_x}^{\gol} (\widetilde{\psi_x}) .$ 

\subsection{Construction of regular representations of $\gol$ for $\ell $ odd}  
\label{O.construction}
Let $\psi_x \in \widehat{\K^{\lup}}$ be a regular one dimensional representation of $\K^{\lup}$ for $x \in \g(\cO_{\lup}).$ Let $\tilde{x} \in \g(\cO_\ell)$ be a lift of $x$ to $\g(\cO_\ell).$ Define the group  $R_{\tilde{x}}=(\K^{\ldown} \cap \C_{\gol}(\tilde{x})) \K^{\lup}$.

The one dimensional representation $\psi_x $ extends to $R_{\tilde{x}}.$ Let $\widetilde{\psi_x}$ be an extension of $\psi_x$ to $R_{\tilde{x}}$ and $\sigma \in \mathrm{Irr}(\K^{\ldown} \mid \psi_x) $ be unique irreducible representation determined by $\widetilde{\psi_x}$ (using Heisenberg lift). Then, 
 \begin{equation}
 \label{eq: sigma restricted to radical}
\sigma|_{R_{\tilde{x}}} = \underbrace{\widetilde{\psi_x} + \cdots + \widetilde{\psi_x} }_{q-\mathrm{times}}. 
 \end{equation}

  Let $\s_{\sigma} = \{ g \in \G(\cO_\ell) \mid \sigma^g \cong \sigma  \}$ be the inertia group of $\sigma \in \mathrm{Irr}(\K^{\ldown} \mid \psi_x)$ in $\gol.$ 
  Then $\s_{\sigma}  =\s_x = \C_{\gol}(\tilde{x}) \K^{\ldown},$  where $\tilde{x} \in \g(\cO_\ell)$ is any lift of $x$ to $\g(\cO_\ell).$ 
  Every $\sigma \in \mathrm{Irr}(\K^{\ldown} \mid \psi_x)$ extends to the inertia group $\s_\sigma.$ 
  In particular, every such extension induces irreducibly to $\gol$ and gives rise to a regular representation of $\gol.$

Let $\mathrm{Irr}^{\SS}(\gol)$ ($\mathrm{Irr}^{\NS}(\gol)$, $\mathrm{Irr}^{\IR}(\gol)$) denote the set of all distinct $\SS$ ($\NS$, $\IR$)  irreducible representations of $\gol.$  The construction also gives that any two $\SS$ ($\NS$, $\IR$) representations of $\gol$ have the same dimension. Let $\dim_{\SS}$ ($\dim_{\NS}$,  
 $ \dim_{\IR})$  denote the dimensions of the corresponding representations of $\gol.$ The following table collects the well known information regarding these. 
\begin{table}[ht]
\renewcommand{\arraystretch}{1.6}
\centering
\begin{tabular}
{|c|c|c|}

\hline 
$\rho$ & \,\, $|\mathrm{Irr}^\rho(\gol)|$ \,\, &\,\, $\dim_\rho$ \,\, \\
\hline
$\SS$ & $\frac{1}{2}(q-1)^3 q^{2 \ell-3}$  & $(q+1)q^{\ell-1}$\\ 
\hline
 
$\NS$ & $(q-1)q^{2 \ell-2 }$  & $ (q^2-1)q^{\ell-2}$ \\
\hline
$\IR$ & $\frac{1}{2} (q-1)(q^2-1)q^{2 \ell-3}$ & $(q-1)q^{\ell-1}$ \\
\hline
\end{tabular}
\caption{Numbers and dimensions of regular representations of $\gol$}
\label{table:numbers-dimensions}
\centering
\end{table}
We also recall a natural extension of some characters of $\K^\lup$ to $\gol.$
Let $I_\alpha=\left(\begin{smallmatrix}
    \alpha & 0\\
    0 & \alpha
\end{smallmatrix}\right)\in  \M_2(\co_{\lup})$ be a scalar matrix. Let $\delta_{\alpha}$ be a character of the group $1+\varpi^{\lup}\co_\ell$ defined by
\[
\delta_{\alpha}(1+\varpi^{\lup}x)=\psi(\varpi^{\lup} \alpha x).
\]
Then $\delta_\alpha$ extends to the group $\co_\ell^\times$, call this extension $\tilde{\delta_{\alpha}}$. The character \begin{eqnarray}
\label{eq:natureal-extension}
\mu_\alpha=\tilde{\delta_{\alpha}}\circ \mathrm{det}
\end{eqnarray}
is a character of $\gol$ where $\mathrm{det}$ is the determinant map from $\gol$ to $\co_\ell^\times$. This character is an extension of $\psi_{I_\alpha}$ from $\K^{\lup}$ to $\gol$.
We now give a general result based on the above construction that we need in the later sections. Let  $A =\left(\begin{smallmatrix}   
 a  & b \\ 
\varpi^{\ell-t} & a+ \varpi^i d  
\end{smallmatrix}\right)\in \mathfrak{g}(\co_{\ldown})$ be a $\sns$ matrix. We denote $\zol \uol \cap \s_A$ by $\U_A$. 
\begin{lemma}
 \label{lem: odd case t=l-1}
     For $\ell$ odd and $\phi$ an extension of $\psi_A$ to $\U_A \K^\lup$, the representation $\ind_{\U_A \K^{\lup}}^{\U_A \K^{\ldown}}\phi$ is multiplicity free for $t \geq \lup$. 
 \end{lemma}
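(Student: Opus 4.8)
The plan is to analyze the representation $\sigma := \ind_{\U_A \K^{\lup}}^{\U_A \K^{\ldown}}\phi$ by first restricting it down to the abelian congruence subgroup $\K^{\lup}$ and understanding which characters of $\K^{\lup}$ occur, and then using Mackey/Clifford theory over the chain $\U_A \K^{\lup} \subset \U_A \K^{\ldown}$. Since $\ldown = \lfloor \ell/2\rfloor$ and $\lup = \lceil \ell/2\rceil$ with $\ell$ odd, we have $\K^{\ldown}/\K^{\lup}$ abelian of order $q^4$ (it is $\M_2(\co_1)$), and $\U_A = \zol\uol \cap \s_A$ is a small subgroup normalizing $\K^{\lup}$. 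The key structural input is the description of $\s_A = \C_{\gol}(\tilde A)\K^{\ldown}$ for $A$ a $\sns$ matrix from \autoref{O.construction}, together with \eqref{eq: sigma restricted to radical}, which tells us how the Heisenberg-type representation $\sigma$ restricts to the radical $R_{\tilde A}$.

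First I would set up Mackey: the multiplicity of an irreducible $\pi$ of $\U_A\K^{\ldown}$ in $\ind_{\U_A\K^{\lup}}^{\U_A\K^{\ldown}}\phi$ equals $\langle \pi|_{\U_A\K^{\lup}}, \phi\rangle$ by Frobenius reciprocity, so it suffices to show that for every irreducible $\pi$ of $\U_A\K^{\ldown}$, the character $\phi$ occurs in $\pi|_{\U_A\K^{\lup}}$ with multiplicity at most one. Because $\U_A\K^{\lup}$ has index $q^{?}$ in $\U_A\K^{\ldown}$ and the quotient is abelian, I would analyze the orbit of $\phi$ under the conjugation action of $\U_A\K^{\ldown}$ on the characters of $\U_A\K^{\lup}$ lying above $\psi_A|_{\K^{\lup}}$. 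The hypothesis $t \geq \lup$ is exactly what controls the shape of $\psi_A$ on $\K^{\lup}$: since $A = \left(\begin{smallmatrix} a & b \\ \varpi^{\ell-t} & a+\varpi^i d\end{smallmatrix}\right)$, the entry $\varpi^{\ell-t}$ becomes $0$ in $\co_{\ldown}$ precisely when $\ell - t \geq \ldown$, i.e. $t \leq \lup$; combined with $t\geq \lup$ this forces $t = \lup$ or puts $A$ in a near-scalar form, which is what makes the natural extension $\mu_\alpha$ of $\psi_{I_\alpha}$ (from \eqref{eq:natureal-extension}) available and makes the stabilizer computation tractable. I would use $\mu_\alpha$ to twist $\phi$ to a character trivial on the relevant scalar part, reducing to a genuinely $2$-dimensional (mirabolic-type) computation.

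The main technical step, and the one I expect to be the principal obstacle, is the explicit identification of the stabilizer in $\U_A\K^{\ldown}$ of the character $\phi$ of $\U_A\K^{\lup}$, together with showing that the induced character decomposes multiplicity-freely — equivalently, that $\phi$ extends (or "half-extends" in the Heisenberg sense) to its full stabilizer and that distinct extensions are inequivalent. Concretely, one must show the commutator pairing $[\,\cdot\,,\,\cdot\,]$ on $\U_A\K^{\ldown}/\ker$, evaluated against $\phi$, is nondegenerate modulo the stabilizer, so that the orbit-method / Heisenberg argument yields a single irreducible above $\phi$ with multiplicity one when restricted back. Here the $\sns$-regularity of $A$ is essential: it guarantees $\C_{\gol}(\tilde A)$ is as small as possible (a torus, so $\U_A$ is at most one-dimensional over $\co$), which bounds the stabilizer and prevents higher multiplicities. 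I would carry this out by direct matrix computation in $\K^{\ldown}/\K^{\lup} \cong \M_2(\co_1)$, writing a general element $I + \varpi^{\ldown} y$, computing $\psi_A$-twisted conjugation, and reading off the radical of the resulting alternating form; the condition $t \geq \lup$ should make this radical exactly $\U_A\K^{\lup}$-plus-center, giving the claim.

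Finally, I would assemble the pieces: multiplicity-freeness of $\ind_{\U_A\K^{\lup}}^{\U_A\K^{\ldown}}\phi$ follows because each irreducible constituent arises as a Heisenberg lift of a single character above $\phi$ and these are pairwise distinct (distinguished by their central characters on $\Z_\ell \cap \s_A$ and by the chosen extension). The role of this lemma in the paper is presumably as the odd-$\ell$ analogue of the inductive step feeding into \autoref{thm:multiplicity-bounds} and \autoref{thm:borel-strong-Gelfand-pair}; I would state the multiplicity-one conclusion in a form that plugs directly into the Mackey-theoretic analysis of $\ind_{\U_A}^{\s_A}$ and hence of $V^t$.
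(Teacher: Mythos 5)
Your proposal is an outline rather than a proof: the step you yourself flag as ``the principal obstacle'' --- identifying the stabilizer of $\phi$ and the radical of the commutator pairing --- is exactly where all the content of the lemma lives, and you do not carry it out (``the condition $t\geq\lup$ \emph{should} make this radical exactly $\U_A\K^{\lup}$-plus-center''). Two specific concerns beyond the deferral. First, your reading of the hypothesis is off: $t\geq\lup$ does not force $t=\lup$ or put $A$ in near-scalar form (for $t>\lup$ the entry $\varpi^{\ell-t}$ is nonzero in $\cO_{\ldown}$); the hypothesis enters elsewhere, as described below. Second, your guessed structure for the radical is not obviously consistent with the known facts: since $\dim\ind_{\U_A \K^{\lup}}^{\U_A \K^{\ldown}}\phi=q^2$ and every irreducible constituent is $q$-dimensional, multiplicity-freeness forces exactly $q$ distinct constituents, so the set of double cosets $g$ with $\phi^g=\phi$ on the relevant intersection must have size $q$, not $1$. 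Moreover, the clean Clifford-theoretic bookkeeping you invoke presupposes that $\U_A\K^{\lup}$ is normal in $\U_A\K^{\ldown}$, which you do not verify; conjugating $\U_A$ by an element of $\K^{\ldown}$ only lands in $\U_A\K^{\ldown}$ a priori, not in $\U_A\K^{\lup}$.

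The paper's argument is more elementary and avoids these issues. It computes $\dim\mathrm{End}\bigl(\ind_{\U_A \K^{\lup}}^{\U_A \K^{\ldown}}\phi\bigr)$ by Mackey over an explicit set of $q^2$ double coset representatives of the form $g=\I+\varpi^{\ldown}\left(\begin{smallmatrix}w&0\\z&-w\end{smallmatrix}\right)$, and shows that exactly $q$ of them satisfy $\phi=\phi^g$ on $\U_A\K^{\lup}\cap(\U_A\K^{\lup})^g$. The hypothesis $t\geq\lup$ is used precisely here: it guarantees that a test element $X=\left(\begin{smallmatrix}1&\varpi^{k}x\\0&1\end{smallmatrix}\right)$, with $k$ chosen in terms of $i$ and $\ell-t$, lies in every such intersection, and comparing $\phi(X)$ with $\phi^g(X)$ cuts the contributing cosets down to exactly $q$. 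Writing $\ind\phi=\oplus m_i\pi_i$ with $\dim\pi_i=q$, one has $\sum_i m_i=q$ and $\sum_i m_i^2=\dim\mathrm{End}=q$, whence every $m_i=1$. To salvage your route you would have to perform this (or an equivalent) computation; as written, the proposal assumes the conclusion of the decisive step.
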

 
\begin{proof}
By the construction mentioned in \autoref{O.construction}, the representation  $\ind_{\U_A \K^{\lup}}^{\U_A \K^{\ldown}}\phi$ has dimension $q^2$ and its every irreducible constituent is $q$ dimensional. Consider the intertwiner 
  \[
     \langle \ind_{\U_A \K^{\lup}}^{\U_A \K^{\ldown}}\phi,\ind_{\U_A \K^{\lup}}^{\U_A \K^{\ldown}}\phi \rangle=\oplus_{g\in\U_A \K ^{\lup}\backslash \U_A \K^{\ldown}/ \U_A \K^{\lup}}{\langle \phi,\phi^g \rangle}_{{\U_A \K^{\lup}}\cap {(\U_A \K^{\lup})}^g}.
    \]
    By direct calculations, the set 
    $$\Upsilon=\{\I+ \varpi^{\ldown}\begin{pmatrix}
        w & 0\\
        z & -w
    \end{pmatrix}\mid  w,z \in \cO_\ell \smallsetminus \varpi (\co_\ell)\}$$
    is an exhaustive set of double cosets representatives $\U_A \K ^{\lup}\backslash \U_A \K^{\ldown}/ \U_A \K^{\lup}$.
Therefore, the above intertwiner is atleast $q$ (in case of multiplicity free) and at most $q^2$ (the number of coset representatives). For $t \geq \lup,$ assume
\[
k=\begin{cases}
    \ldown-(\ell-t), & \text{ if } i=\ldown; \\
    \ldown-\text{min} \{i,\ell-t\}, & \text{ if } i<\ldown.
\end{cases}
\]
Then we have $X = \left(\begin{smallmatrix} 1 & \varpi^{ k}x \\ 0 & 1 \end{smallmatrix}\right) \in \U_A \K^{\lup} \cap (\U_A 
\K^{\lup})^g $ for all $g \in \Upsilon.$ By comparing $\phi(X) $ and $\phi^g(X)$ for $g \in \Upsilon$, there exists exactly $q$ such $g$ satisfying $\phi = \phi^g$ on $\U_A \K^\lup \cap (\U_A \K^{\lup})^g.$ Hence $\langle\ind_{\U_A \K^{\lup}}^{\U_A \K^{\ldown}}\phi,\ind_{\U_A \K^{\lup}}^{\U_A \K^{\ldown}}\phi\rangle = q$,  and therefore $\ind_{\U_A \K^{\lup}}^{\U_A \K^{\ldown}}\phi$ is multiplicity free.  
 \end{proof}

\begin{proposition} 
\label{prop:multiplicity-free-SA-representation}
Let $\ell \geq 1$ be odd, $t \geq \lup$ and $A =\left(\begin{smallmatrix}   
 a  & b \\ 
\varpi^{\ell-t} & a+ \varpi^i d  
\end{smallmatrix}\right)\in \mathfrak{g}(\co_{\ldown})$. Let $\tilde{\sigma}$ be an irreducible representation of $\s_A$ that lies above $\psi_A$ and ${\psi_t}_{\mid {\uol \cap \s_A}}$. Then $\tilde{\sigma}_{\mid {\U_A}}$ is multiplicity free.   
\end{proposition}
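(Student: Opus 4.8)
The plan is to reduce the assertion to the multiplicity‑freeness of the induced representation in \autoref{lem: odd case t=l-1}, by pushing the restriction problem from $\U_A$ up to the larger subgroup $\U_A\K^{\lup}$. To set up, I would first record the shape of $\tilde\sigma$. Since $\K^{\ldown}\trianglelefteq\gol$ and $\psi_A$ is $\s_A$-invariant, the construction in \autoref{O.construction} (via Clifford theory for $\K^{\ldown}\trianglelefteq\s_A$, using that $\s_A/\K^{\ldown}$ is abelian) shows that $\sigma:=\tilde\sigma|_{\K^{\ldown}}$ is one of the Heisenberg representations of $\K^{\ldown}$ lying above $\psi_A$, that $\dim\tilde\sigma=\dim\sigma=q$, and that $\tilde\sigma$ is an extension of $\sigma$ to $\s_A$; moreover, since $\K^{\lup}\subseteq R_{\tilde{A}}$ and $\sigma|_{R_{\tilde{A}}}$ is $q$ copies of $\widetilde{\psi_A}$ (see \eqref{eq: sigma restricted to radical}), we get $\tilde\sigma|_{\K^{\lup}}=q\,\psi_A$. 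Next, because the scalar torus $\zol$ lies in $\C_{\gol}(\tilde{A})\subseteq\s_A$, we have $\U_A=\zol\cdot(\uol\cap\s_A)$, which is abelian ($\zol$ is central and $\uol$ is abelian); hence $\tilde\sigma|_{\U_A}$ is a sum of $q$ linear characters, and the proposition is equivalent to the statement that these $q$ characters are pairwise distinct.

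The crux is the passage to $\U_A\K^{\lup}$. A direct computation with the explicit entries of $A$, using $t\ge\lup$ — this is the same point that makes the extension $\phi$ of $\psi_A$ to $\U_A\K^{\lup}$ in \autoref{lem: odd case t=l-1} exist — shows $[\U_A,\K^{\lup}]\subseteq\ker\psi_A$; since $\U_A$ and $\K^{\lup}$ are abelian, it follows that $\U_A\K^{\lup}/\ker\psi_A$ is abelian. Combining this with $\tilde\sigma|_{\K^{\lup}}=q\,\psi_A$ and Clifford theory for $\K^{\lup}\trianglelefteq\U_A\K^{\lup}$, one gets $\tilde\sigma|_{\U_A\K^{\lup}}=\phi_1+\cdots+\phi_q$, a sum of $q$ linear characters of $\U_A\K^{\lup}$, each restricting to $\psi_A$ on $\K^{\lup}$. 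A linear character of $\U_A\K^{\lup}$ is determined by its restrictions to $\U_A$ and to $\K^{\lup}$, so the characters $\phi_j|_{\U_A}$ are pairwise distinct precisely when the $\phi_j$ are pairwise distinct; hence it suffices to show that $\tilde\sigma|_{\U_A\K^{\lup}}$ is multiplicity free.

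To conclude, I would invoke \autoref{lem: odd case t=l-1}. Since $\K^{\ldown}\subseteq\U_A\K^{\ldown}\subseteq\s_A$ and $\tilde\sigma|_{\K^{\ldown}}=\sigma$ is irreducible, the representation $\tau:=\tilde\sigma|_{\U_A\K^{\ldown}}$ is irreducible, and $\tau|_{\U_A\K^{\lup}}=\tilde\sigma|_{\U_A\K^{\lup}}$. For each $j$, Frobenius reciprocity gives
\[
\langle \tilde\sigma|_{\U_A\K^{\lup}},\, \phi_j \rangle_{\U_A\K^{\lup}} = \langle \tau,\, \ind_{\U_A\K^{\lup}}^{\U_A\K^{\ldown}}\phi_j \rangle_{\U_A\K^{\ldown}}.
\]
By \autoref{lem: odd case t=l-1} the induced representation $\ind_{\U_A\K^{\lup}}^{\U_A\K^{\ldown}}\phi_j$ is multiplicity free, and $\tau$ is irreducible, so the right-hand side is at most $1$. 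Therefore every $\phi_j$ appears in $\tilde\sigma|_{\U_A\K^{\lup}}$ with multiplicity at most one, so $\tilde\sigma|_{\U_A\K^{\lup}}$, and hence $\tilde\sigma|_{\U_A}$, is multiplicity free.

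I expect the main obstacle to be the routine but careful bookkeeping needed for the Clifford-theoretic steps — verifying that $\s_A/\K^{\ldown}$ is abelian (so that $\tilde\sigma$ really is an extension of $\sigma$), that $\psi_A$ is $\U_A\K^{\lup}$-invariant, and that $\ker\psi_A$ is normal in $\U_A\K^{\lup}$ — together with the commutator identity $[\U_A,\K^{\lup}]\subseteq\ker\psi_A$, which I would establish by the same case split ($i=\ldown$ versus $i<\ldown$, with the parameter $k$) used in the proof of \autoref{lem: odd case t=l-1}. I also note that the hypothesis that $\tilde\sigma$ lies above $\psi_t|_{\uol\cap\s_A}$ plays no role in the multiplicity-freeness itself; it only serves to single out the relevant $\tilde\sigma$, so the argument in fact applies to every irreducible representation of $\s_A$ lying above $\psi_A$.
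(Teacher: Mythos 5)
Your proposal is correct and follows essentially the same route as the paper: restrict $\tilde\sigma$ to $\U_A\K^{\ldown}$ (where it stays irreducible), combine \autoref{lem: odd case t=l-1} with Frobenius reciprocity to get multiplicity-freeness of $\tilde\sigma|_{\U_A\K^{\lup}}$, and then descend to $\U_A$ using $\tilde\sigma|_{\K^{\lup}}\cong\psi_A^{\oplus q}$. You merely make the final descent step explicit (the commutator observation $[\U_A,\K^{\lup}]\subseteq\ker\psi_A$, which in fact follows immediately from $\U_A\subseteq\s_A$ stabilizing $\psi_A$, and the fact that a linear character of $\U_A\K^{\lup}$ agreeing with $\psi_A$ on $\K^{\lup}$ is determined by its restriction to $\U_A$), which the paper leaves implicit.
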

\begin{proof}
Let $\phi$ be an extension of $\psi_A$ and ${\psi_t}_{\mid {\uol \cap \s_A}}$ to $\U_A \K^{\lup}.$ Note $\U_A \K^{\lup} \subseteq \s_A.$ Let $\tilde{\sigma}$ be an irreducible representation of $\s_A$ such that $\rho \cong \mathrm{Ind}_{\s_A}^\gol (\tilde{\sigma}).$ Denote $\tilde{\sigma}|_{\U_A \K^{\ldown}}$ by $\tilde{\sigma}'$. By the construction of $\tilde{\sigma}$, the representation $\tilde{\sigma}'$ is irreducible. Hence it is enough to prove that $\tilde{\sigma}'|_{\U_A}$ is multiplicity-free. By \autoref{lem: odd case t=l-1} and Frobenius reciprocity, $\tilde{\sigma}'|_{\U_A \K^{\lup}}$ is multiplcity-free. By construction of $\tilde{\sigma},$ we have $\tilde{\sigma} |_{\K^\lup} \cong \psi_A^{\oplus q}.$ Hence we must have $\tilde{\sigma}' |_{\U_A}$ is multiplicity free.          
\end{proof}

We now describe known alternative constructions of $\ss$  and $\sns$  representations of $\gol$ (see \cite{MR2588859}*{Proposition 3} and \cite{MR2584957}*{Section 3.3.3} respectively). We will use these constructions as required. 
\subsection{Alternative construction of $\ss$ representations of $\gol$} 
\label{sec:construction-SS}

Let $(\chi_1,\chi_2)\in \widehat{\Ol^\times} 
 \times \widehat{\Ol^\times}$. Define a character of $\bol \subset \gol$ as 
\[
(\chi_1,\chi_2)\begin{pmatrix}
  x & z\\ 0 & y
\end{pmatrix}=\chi_1(x)\chi_2(y).
\]

 \begin{definition}(Injective character of $\cO_\ell^\times$) A character $\chi$ of $\cO_\ell^\times$ is called injective if $\chi|_{1+ \varpi^{\ell-1}\cO_\ell} \neq 1.$ 
\end{definition}

\begin{lemma}  
\label{lem:split-semisimple from borel induction}
Let 
$
C=\{(\chi_1,\chi_2)\in \widehat{\co_\ell^\times} \times \widehat{\co_\ell^\times} \mid \chi_1\chi_2^{-1}(1+\varpi^{\ell-1}x)\neq 1\}. 
$ Then
\begin{enumerate} 
\item $|C|=q^{2\ell-3}(q-1)^3$.
\item Let $(\chi_1,\chi_2),(\chi_3,\chi_4) \in C$. Then \[\ind_{\bol}^{\gol}(\chi_1,\chi_2)\cong \ind_{\bol}^{\gol}(\chi_3,\chi_4) \, \mathrm{if \, and \, only \, if}\,
(\chi_1,\chi_2) \in \{(\chi_3,\chi_4),(\chi_4,\chi_3)\}.\]
\item If $\rho\in \mathrm{Irr}^{\mathrm{ss}}(\gol)$ then $\rho\cong\ind_\bol^\gol(\chi_1,\chi_2)$ for some $(\chi_1,\chi_2) \in C$.
\end{enumerate}
\end{lemma}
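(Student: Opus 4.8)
The three parts all flow from a single Mackey-theoretic computation; here is the plan.

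\emph{Part (1)} is an elementary count. We have $|\cO_\ell^\times|=(q-1)q^{\ell-1}$, and a character of $\cO_\ell^\times$ is trivial on $1+\varpi^{\ell-1}\cO_\ell$ exactly when it factors through $\cO_\ell^\times/(1+\varpi^{\ell-1}\cO_\ell)\cong\cO_{\ell-1}^\times$. Hence there are $(q-1)q^{\ell-1}-(q-1)q^{\ell-2}=(q-1)^2q^{\ell-2}$ characters $\eta$ of $\cO_\ell^\times$ with $\eta|_{1+\varpi^{\ell-1}\cO_\ell}\neq1$. Choosing $\chi_2\in\widehat{\cO_\ell^\times}$ freely and then $\chi_1$ in the coset $\chi_2\cdot\{\text{such }\eta\}$ yields $|C|=(q-1)q^{\ell-1}\cdot(q-1)^2q^{\ell-2}=(q-1)^3q^{2\ell-3}$. (The case $\ell=1$ is the classical one and should be separated out; the formula is intended for $\ell\geq2$.)

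\emph{For (2)} I would use Mackey's intertwining formula: $\langle\ind_\bol^\gol(\chi_1,\chi_2),\ind_\bol^\gol(\chi_3,\chi_4)\rangle=\sum_g\langle(\chi_1,\chi_2),{}^g(\chi_3,\chi_4)\rangle_{\bol\cap g\bol g^{-1}}$, the sum over representatives $g$ of $\bol\backslash\gol/\bol$. A direct analysis of the $\bol$-orbits on $\mathbb{P}^1(\cO_\ell)=\gol/\bol$, tracking the valuation of the slope of a line, shows there are exactly $\ell+1$ double cosets, represented by $I$, $w=\mat{0}{1}{1}{0}$, and $g_k=\mat{1}{0}{\varpi^k}{1}$ for $1\leq k\leq\ell-1$. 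The $g=I$ term is $1$ iff $(\chi_1,\chi_2)=(\chi_3,\chi_4)$; since $\bol\cap w\bol w^{-1}=\tol$, the $g=w$ term is $1$ iff $\chi_1=\chi_4$ and $\chi_2=\chi_3$; and because $(\chi_i,\chi_j)\in C$ forces $\chi_i\neq\chi_j$, at most one of these two alternatives occurs.

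The crux is to show that every $g_k$-term vanishes. One computes $H_k:=\bol\cap g_k\bol g_k^{-1}=\{\mat{x}{z}{0}{y}:x\equiv y-\varpi^kz\pmod{\varpi^{\ell-k}}\}$, and that on $H_k$ one has $g_k^{-1}\mat{x}{z}{0}{y}g_k=\mat{x+\varpi^kz}{z}{0}{y-\varpi^kz}$. The elements $\mat{1}{z}{0}{1}$ lie in $H_k$ precisely for $z\in\varpi^{\max(\ell-2k,0)}\cO_\ell$; on these $(\chi_1,\chi_2)$ is trivial, while ${}^{g_k}(\chi_3,\chi_4)$ equals $\chi_3(1+\varpi^kz)\chi_4(1-\varpi^kz)=(\chi_3\chi_4^{-1})(1+\varpi^kz)$, using $(1+\varpi^kz)(1-\varpi^kz)=1$ in $\cO_\ell$ for the allowed $z$. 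As $z$ ranges over that set, $1+\varpi^kz$ sweeps out a subgroup of the form $1+\varpi^j\cO_\ell$ with $\ell/2\leq j\leq\ell-1$, which contains $1+\varpi^{\ell-1}\cO_\ell$. So equality of the two characters of $H_k$ would force $\chi_3\chi_4^{-1}|_{1+\varpi^{\ell-1}\cO_\ell}=1$, contradicting $(\chi_3,\chi_4)\in C$; hence the $g_k$-term is $0$. Consequently $\langle\ind_\bol^\gol(\chi_1,\chi_2),\ind_\bol^\gol(\chi_3,\chi_4)\rangle$ equals $1$ if $(\chi_1,\chi_2)\in\{(\chi_3,\chi_4),(\chi_4,\chi_3)\}$ and $0$ otherwise. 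Taking $(\chi_3,\chi_4)=(\chi_1,\chi_2)$ shows $\ind_\bol^\gol(\chi_1,\chi_2)$ is irreducible, and then the isomorphism criterion of (2) is immediate.

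\emph{For (3)}, observe that the involution $(\chi_1,\chi_2)\mapsto(\chi_2,\chi_1)$ is fixed-point-free on $C$ (a fixed point would have $\chi_1=\chi_2$). By (2) the modules $\ind_\bol^\gol(\chi_1,\chi_2)$, $(\chi_1,\chi_2)\in C$, taken up to this involution, form a family of $\tfrac12|C|=\tfrac12(q-1)^3q^{2\ell-3}$ pairwise non-isomorphic irreducibles, each of dimension $[\gol:\bol]=(q+1)q^{\ell-1}$. Restricting such a module to the abelian normal subgroup $\K^{\ell-1}$ and applying Clifford theory, one sees its constituents lie above characters $\psi_x$ with $x$ conjugate to an upper-triangular matrix whose diagonal entries, read off from $\chi_1$ and $\chi_2$ on $1+\varpi^{\ell-1}\cO_\ell$, are distinct elements of $\mathbb{F}_q$; thus $x$ is split semisimple and each $\ind_\bol^\gol(\chi_1,\chi_2)$ is an $\ss$ representation. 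Comparing with $|\mathrm{Irr}^{\ss}(\gol)|=\tfrac12(q-1)^3q^{2\ell-3}$ from Table~\ref{table:numbers-dimensions} forces this family to exhaust $\mathrm{Irr}^{\ss}(\gol)$, which is (3). (Alternatively, (3) may be quoted from the alternative construction of $\ss$ representations recalled above.) The main obstacle is the vanishing of the $g_k$-terms: it needs both the explicit list of double cosets $\bol\backslash\gol/\bol$ and the recognition that the injectivity condition cutting out $C$ is precisely what separates the relevant characters of the intersections $H_k$.
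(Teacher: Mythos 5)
Your proposal is correct and follows essentially the same route as the paper: Mackey's intertwining formula over the $\ell+1$ double cosets of $\bol\backslash\gol/\bol$, with the injectivity of $\chi_1\chi_2^{-1}$ on $1+\varpi^{\ell-1}\cO_\ell$ killing the intermediate terms, followed by identifying the type via restriction to a congruence subgroup and comparing the count $\tfrac12|C|$ with $|\mathrm{Irr}^{\ss}(\gol)|$ from Table~\ref{table:numbers-dimensions}. The only cosmetic differences are that you derive irreducibility directly from the same Mackey computation (the paper cites it from the literature) and restrict to $\K^{\ell-1}$ rather than $\K^{\lup}$ in part (3); both choices are equivalent here.
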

\begin{proof}
(1) This follows easily by considering $|\cO_\ell^\times|$ and $|1+ \varpi^{\ell-1}\cO_\ell|.$ 

(2) From \cite{MR2584957}*{Lemma~4.1} we have $\bol \backslash \gol /\bol =  \{\left(\begin{smallmatrix}
       1 & 0\\
       \varpi^j & 1
\end{smallmatrix}\right),\left(\begin{smallmatrix}
       0 & 1\\
       1 & 0
\end{smallmatrix}\right)\mid 1\leq j\leq \ell \}$. Suppose $(\chi_1,\chi_2)\neq (\chi_3,\chi_4)$ then by Mackey's restriction formula and Frobenius reciprocity, we have
\[
\langle \ind_{\bol}^{\gol}(\chi_1,\chi_2),\ind_{\bol}^{\gol}(\chi_3,\chi_4)\rangle= {\langle(\chi_1,\chi_2), (\chi_4,\chi_3) \rangle}_{\tol} \oplus_{g\in \{\left(\begin{smallmatrix}
       1 & 0\\
       \varpi^j & 1
\end{smallmatrix}\right), 1\leq j \leq \ell\}}{\langle(\chi_1,\chi_2), {(\chi_3,\chi_4)}^g \rangle}_{\bol\cap \bol^g}
\]
By direct calculations, we obtain $${\langle(\chi_1,\chi_2), {(\chi_3,\chi_4)}^g \rangle}_{\bol\cap \bol^g}=0; \,\,\mathrm{for} \,\,  g \in \{\left(\begin{smallmatrix}
       1 & 0\\
       \varpi^j & 1
\end{smallmatrix}\right) 
\mid \ 1\leq j \leq \ell-1\}.$$ Hence $ \ind_{\bol}^{\gol}(\chi_1,\chi_2)\cong\ind_{\bol}^{\gol}(\chi_3,\chi_4)$ if and only if $(\chi_1,\chi_2)=(\chi_4,\chi_3)$. 

(3) Assume that $\lambda_1, \lambda_2 \in \widehat{\cO_\ell^\times}$ such that  $\chi_i(1+\varpi^{\lup}x)=\psi(\varpi^{\lup}\lambda_i x)$ for $1 \leq i \leq 2$. Since $\chi_1\chi_2^{-1}$ is injective, we also have  $\lambda_1 - \lambda_2 \in \co_{\ell}^{\times}.$
By Mackey's restriction formula,  $\psi_A$ for $A=\left(\begin{smallmatrix}
    \lambda_1 & 1\\
    0 & \lambda_2
\end{smallmatrix}\right)$ is a constituent of $\mathrm{Res}_{\K^{\lup}}^{\gol}\ind_{\bol}^{\gol} (\chi_1,\chi_2)$. 

 Since $A$ is a $\ss$ matrix, we obtain that $\ind_{\bol}^{\gol}(\chi_1,\chi_2)$ has $\ss$ representations as constituents. By \cite{MR2584957}*{Lemma~4.1}, $\ind_{\bol}^{\gol}(\chi_1,\chi_2)$ is an irreducible representation. Hence  $\ind_{\bol}^{\gol}(\chi_1,\chi_2)\in \mathrm{Irr}^{\mathrm{ss}}(\gol)$. Using (1) and (2), we obtain $\frac{1}{2}p^{2\ell-3}(p-1)^3$ number of distinct $\ss$ representations of $\gol$ of the form $\ind_{\bol}^{\gol}(\chi_1,\chi_2)$. By comparing this to $|\mathrm{Irr}^{\mathrm{ss}}(\gol)|$ from \autoref{table:numbers-dimensions}, we obtain our result. 
\end{proof}

  \subsection{Alternative construction of $\sns$ representations of $\gol$ for odd $\ell$}
  \label{sec: construction-SNS}
  In this section, we recall an alternative construction of $\sns$  representations of $\GL_2(\co_\ell)$ from \cite{MR2584957}*{Section 3.3.3}  for odd $\ell$.

  Let $A=\left(\begin{smallmatrix}
      \alpha & 1\\
      \varpi^j\beta & \alpha 
  \end{smallmatrix}\right) \in \M_2(\cO_{\ell})$ and consider the corresponding character $\psi_A$ of $\K^{\lup}$. Then $\s_A = \C_{\gol}(A)   \K^{\ldown} $ is given by 
  \begin{equation*}
      \s_A =\left\{\begin{pmatrix}
          x & z\\
          \varpi^j \beta z+\varpi^{\ldown}y & x+\varpi^{\ldown}w
      \end{pmatrix} \mid a,b,c,d \in \cO_\ell\right\}.
  \end{equation*}
  Consider a subgroup $\mathrm{N}=\left \{\left(\begin{smallmatrix}
      1+\varpi^{\ldown}x & \varpi^{\lup-j}z\\
       \varpi^{\lup}y    & 1+\varpi^{\ldown}w
             \end{smallmatrix}\right) \mid x,y,z,w \in \cO_\ell \right \}$ of $\gol$. It is a normal subgroup of $\s_A$ and we can extend $\psi_A$ to $\mathrm{N}$ as follows:
\begin{equation*}
    \psi_A'\begin{pmatrix}
      1+\varpi^{\ldown}x & \varpi^{\lup-j}z\\
       \varpi^{\lup}y   & 1+\varpi^{\ldown}w
             \end{pmatrix}=\psi(\varpi^{\lup} y+\varpi^{\lup}\beta z) \mu_\alpha\begin{pmatrix}
      1+\varpi^{\ldown}x & \varpi^{\lup-j}z\\
       \varpi^{\lup}y    & 1+\varpi^{\ldown}w
             \end{pmatrix},
\end{equation*}
where $\mu_\alpha$ is defined in \autoref{eq:natureal-extension}. We can easily deduce that the stabilizer of $\psi_A'$ in $ \s_A$ is $ \mathrm{N}  \C_{\gol}(A)$. Since $\C_{\gol}(A)$ is abelian, we 
can extend $\psi'_A$ to a character $\psi_A{''}$ of $\mathrm{N}  \C_{\gol}(A)$. Let $\tilde{\sigma}=\ind_{\mathrm{N} \C_{\gol}(A)}^{\s_A} \psi_A''$. Then $\tilde{\sigma}$ is a $q-$dimensional irreducible representation of $\mathrm{N}  \C_{\gol}(A)$ and  $\ind_{\s_A}^{\gol}{\tilde{\sigma}}$ is a $\sns$ representation of $\gol$. By \cite{MR2584957}*{Theorem 3.1}, any $\sns$ representation of $\gol$ is of the form $\ind_{\s_A}^{\gol}\tilde{\sigma}=\ind_{\mathrm{N}  \C_{\gol}(A)}^{\gol}{\psi_A''}$ for some $A=\left(\begin{smallmatrix}
    \alpha & 1 \\
    \varpi \beta & \alpha
\end{smallmatrix}\right)\in \M_2(\cO_{\ell})$.

\section{Proof of ~\autoref*{thm:multiplicity-bounds} and ~\autoref*{thm:intermediate-steps-multiplicity-bounds}}
\label{sec:proof-multiplicity-bounds}
Let $\zol$ denote the centre of the group $\gol$. Recall that  \[
a(t, \ell) = \underset{  \rho \in \mathrm{Irr}(\gol)  }{\mathrm{Sup}} \{ m_\rho \mid  \,\, \mathrm{dim}_\mathrm{\mathbb C}(\mathrm{Hom}_{\gol}(V^t, \rho)) = m_\rho \}. 
\]
We have  $\ind_{\uol}^{\gol}\psi_t = \oplus_{\chi \in \widehat{\zol}} \ind_{\zol \uol}^{\gol} (\chi \otimes \psi_t),$ where 
\[
\chi \otimes \psi_t: \zol \uol \rightarrow \mathbb C^\times; \,\, \left(\begin{smallmatrix}
    x & 0 \\ 0 & x
\end{smallmatrix} \right)\left(\begin{smallmatrix}
    1 & y \\ 0 & 1 
\end{smallmatrix}\right) \mapsto \chi(x) \psi(\varpi^{\ell -t} y).
\]
Hence, to determine $a(t, \ell)$, it is sufficient to determine the multiplicity bounds of irreducible constituents of $\ind_{\zol \uol}^{\gol} (\chi \otimes \psi_t)$ for all $\chi \in \widehat{\zol}.$ Denote $\ind_{\zol \uol}^\gol(\chi\otimes\psi_t)$ by $V_\chi^t$. At times, we will also denote $\chi \otimes \psi_t$ by $(\chi, \psi_t)$. We fix $\lambda \in \cO_\ell$ such that $\chi(1 + \varpi^\lup x) = \psi(\varpi^\lup \lambda x)$ for all $x \in \cO_\ell$. 
Let $(\vtc)^{\text{reg}}$   and  $(\vtc)^{\text{non-reg}}$ denote the regular and the non-regular part of the representation $\vtc$. For $t < \ell$, $ \psi_t$ is also a character of $\U_{\ell-1}$. We will denote this character by $\psi_t$ itself.

 Our first result proves that the non-regular part of $\vtc$ can be understood by combining induction with the understanding of the regular part of $\ind_{\zol \uol}^{\gol} (\chi,\psi_t)$ for all $t$, $\chi$ and $\ell$.

\begin{lemma}
\label{lem: non-regular constituents}
  For $\ell \geq 2$ and $t \in [0, \ell-1]$, we have $(\vtc)^{\text{non-reg}} \cong \ind_{{\Z_{\ell-1}}{\U_{\ell-1}}}^{\G_{\ell-1}}(\bar{\chi}\otimes\psi_t) $ for a character $\bar{\chi}$ of $\Z_{\ell-1}$ such that $\bar{\chi} = \chi \circ \chi_0^2$ for some $\chi_0 \in \widehat{\zol}$. 
  
\end{lemma}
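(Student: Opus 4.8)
The plan is to analyze the non-regular part of $\vtc = \ind_{\zol\uol}^{\gol}(\chi\otimes\psi_t)$ by passing through the congruence filtration. Recall that an irreducible representation $\rho$ of $\gol$ is non-regular precisely when its restriction to $\K^{\ell-1}$ contains a character $\psi_x$ with $x$ not a regular matrix of $\g(\residuefield)$ — equivalently, $x$ is (conjugate to) a scalar modulo $\wp$, so in particular $\rho|_{\K^{\ell-1}}$ factors through a representation inflated from $\G_{\ell-1}$ after an appropriate twist. More precisely, the non-regular representations of $\gol$ are exactly those of the form $\mu_\alpha \otimes (\rho' \circ \rho_{\ell-1})$ for $\rho'$ an irreducible representation of $\G_{\ell-1}$, where $\mu_\alpha$ is the one-dimensional twist from \eqref{eq:natureal-extension}; I would first make this parametrization precise. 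The key point is that the projection of $(\vtc)^{\text{non-reg}}$ onto $\G_{\ell-1}$-representations is governed by Frobenius reciprocity applied to the subgroup $\K^{\ell-1}\zol\uol$.

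First I would compute $\Hom_{\K^{\ell-1}}(V^t_\chi, \psi_x)$ for scalar $x$ by Mackey theory: since $\K^{\ell-1}$ is abelian and normal, and $\zol\uol \cap \K^{\ell-1}$ together with the double cosets are easy to enumerate, one sees that $\psi_x$ appears in $\vtc|_{\K^{\ell-1}}$ only for $x$ a scalar matrix $I_\alpha$ — because $\psi_t$ restricted to $\uol\cap\K^{\ell-1}$ and $\chi$ restricted to $\zol\cap\K^{\ell-1}$ force the upper-right entry of $x$ to vanish and the diagonal entries to be equal modulo the relevant power of $\wp$ (here I use $t\le\ell-1$, so $\psi_t$ is trivial on $\uol\cap\K^{\ell-1}$). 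Collecting the scalar contributions, the non-regular part of $\vtc$ is built from representations lying above such $\psi_{I_\alpha}$. Next I would use the natural extension $\mu_\alpha$: twisting $\vtc$ by $\mu_\alpha^{-1}$ moves the relevant scalar character to the trivial character of $\K^{\ell-1}$, so the corresponding summand becomes a sum of representations trivial on $\K^{\ell-1}$, i.e.\ inflated from $\G_{\ell-1}$. Identifying which scalar $\alpha$ is relevant: it is forced by $\chi$, namely $2\alpha \equiv \lambda$ in the appropriate quotient, so $\alpha$ exists because $p$ is odd (this is where $\Char\residuefield\ne 2$ is used), and the twist $\mu_\alpha$ then descends to a character $\chi_0$ of $\zol$ with $\bar\chi := \chi\circ\chi_0^2$ a character of $\Z_{\ell-1}$.

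Having reduced to representations inflated from $\G_{\ell-1}$, I would identify the resulting $\G_{\ell-1}$-module explicitly. After untwisting by $\mu_\alpha$, the functor "take $\K^{\ell-1}$-fixed vectors" sends $\vtc$ (restricted to its non-regular part) to $\ind$ of a character of $\Z_{\ell-1}\U_{\ell-1}$, and chasing the construction of the induced module through the quotient map $\rho_{\ell-1}\colon\gol\to\G_{\ell-1}$ shows this character is exactly $\bar\chi\otimes\psi_t$ — the unipotent part is unchanged because $\psi_t$ for $t\le\ell-1$ is literally pulled back from $\U_{\ell-1}$, and the central part becomes $\bar\chi$ by the twist computation. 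Conversely every constituent of $\ind_{\Z_{\ell-1}\U_{\ell-1}}^{\G_{\ell-1}}(\bar\chi\otimes\psi_t)$, inflated and twisted back by $\mu_\alpha$, is a non-regular constituent of $\vtc$ with the correct multiplicity, because inflation-then-twist is an equivalence onto the non-regular block and $\K^{\ell-1}$-invariants is its inverse on that block. This gives the claimed isomorphism $(\vtc)^{\text{non-reg}} \cong \ind_{\Z_{\ell-1}\U_{\ell-1}}^{\G_{\ell-1}}(\bar\chi\otimes\psi_t)$.

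The main obstacle I anticipate is the bookkeeping around the twist: one must verify that the single character $\mu_\alpha$ (with $\alpha$ determined by $\chi$) simultaneously handles every scalar $\psi_{I_\alpha}$ that occurs — a priori different non-regular constituents could sit above different scalars — and that no scalar other than the one dictated by $\chi$ actually appears. This is exactly the Clifford-theoretic content: for a fixed central character $\chi$ of $\zol\subseteq\gol$, the central character of any constituent of $\vtc$ is $\chi$, which pins down $\alpha$ uniquely via $\chi|_{1+\wp^{\ell-1}\OO}$ and the relation $\chi(1+\wp^\lup x)=\psi(\wp^\lup\lambda x)$, so all non-regular constituents lie above the same $\psi_{I_\alpha}$. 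Once that uniqueness is nailed down, the rest is a direct though slightly tedious comparison of induced modules under the quotient map, and the multiplicities match automatically because twisting and inflation preserve $\Hom$-spaces. I would also double-check the edge case $\ell=2$, where $\lup=\ldown=1$ and $\K^{\ell-1}=\K^1$ is the full congruence kernel, to confirm the character $\bar\chi$ is well defined on $\Z_1=\residuefield^\times$.
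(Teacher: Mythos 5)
Your proposal is correct and follows essentially the same route as the paper: both arguments pin down the unique scalar character $\psi_{I_{\lambda/2}}$ of $\K^{\ell-1}$ above which the non-regular part sits (using that $p$ is odd to solve $2\alpha=\lambda$), untwist by a determinant character extending it to all of $\gol$, and then identify the resulting $\K^{\ell-1}$-trivial module as $\ind_{\Z_{\ell-1}\U_{\ell-1}}^{\G_{\ell-1}}(\bar\chi\otimes\psi_t)$, with $\bar\chi$ differing from $\chi$ by the square of the restriction of that twist to the centre. The only quibble is cosmetic: the trace pairing forces the \emph{lower-left} entry of $x$ (not the upper-right) to vanish on $\uol\cap\K^{\ell-1}$, but this does not affect the argument since only scalar $x$ matter.
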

\begin{proof} 
By definition, an irreducible representation $\rho$ of $\gol$ is non-regular if and only if $\rho$ is a sub-representation of $\ind_{\K^{\ell-1}}^{\gol}(\psi_{\left(\begin{smallmatrix}  x & 0 \\ 0  & x \end{smallmatrix} \right)})$ for some $\psi_{\left(\begin{smallmatrix}  x & 0 \\ 0  & x \end{smallmatrix} \right)} \in \widehat{\K^{\ell-1}}$. We note that $$(\chi \otimes \psi_t)|_{\zol \uol \cap \K^{\ell-1}} = (\psi_A)|_{\zol \uol \cap \K^{\ell-1}},$$ where $ A= \left(\begin{smallmatrix} \lambda/2 & 0 \\ 0  & \lambda/2 \end{smallmatrix}\right).$ 
Hence
\[
(V_\chi^t)^{\mathrm{non-reg}} \cong \ind_{\zol \uol \K^{\ell-1}}^{\G_\ell}(\chi \otimes \psi_t \otimes \psi_A),  
\]
where $\chi \otimes \psi_t \otimes \psi_A(xy) = (\chi \otimes \psi_t)(x) \psi_A(y)$ for $x \in \zol \uol$ and $y \in \K^{\ell -1}.$ 
Therefore $(V^t_ \chi)^{\mathrm{non-reg}}$ is a sub-representation of $\ind_{\K^{\ell-1}}^{\gol}(\psi_A)$.
Let $\delta_A \in \mathrm{Irr}(\gol)$ be an extention of $\psi_A$ to $\gol$ given by
\[
\delta_A(g) = \psi(\tilde{\lambda}\, \det(g)),
\]
where $\tilde{\lambda} \in \cO_{\ell}$ is a lift of $\lambda.$
By \cite{MR2270898}*{Corollary~6.17}, we have 
\[
\ind_{K^{\ell-1}}^{\gol}(\psi_A) \cong \delta_A \otimes \mathrm{Ind}_{(e)}^{\gol}(\mathbf {1}) \cong \mathbb{C}[\G_{\ell-1}],
\]
where $(e)$ and $\mathbf{1}$ denote the trivial subgroup of $\gol$ and the trivial character of $(e),$ respectively.   
Therefore, 
\[
\ind_{\zol \uol\K^{\ell-1}}^{\gol}(\chi \otimes \psi_t \otimes \psi_A) \cong \ind_{{\Z_{\ell-1}}{\U_{\ell-1}}}^{\G_{\ell-1}}(\bar{\chi}\otimes\psi_t),
\]
where $\bar{\chi} = \chi \circ \delta_A^{-1}$ is a character of $\Z({\co_{\ell-1}}).$ We note that $\delta_A^{-1}= \chi_0^2$ on $\zol$ for some character $\chi_0$ of $\zol$,  hence $\bar{\chi} = \chi \circ \chi_0^2.$
\end{proof}

\begin{lemma}
		\label{type of the matrix A}
  Let $X \in \g(\cO_{\ell_1})$ be such that $\psi_X$ is a constituent of $\mathrm{Res}^{\gol}_{\K^\lup}(V^t_\chi)$, then $X$ is conjugate to  $ \left(\begin{smallmatrix}   
 a  & b \\ 
\varpi^{\ell-t} & \lambda-a
\end{smallmatrix}\right) 
$
for some $a, b \in \cO_{\ldown}$.

	\end{lemma}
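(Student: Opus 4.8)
The plan is to apply Mackey's restriction-of-induction formula to $\vtc=\ind_{\zol\uol}^{\gol}(\chi\otimes\psi_t)$ along the congruence subgroup $\K^{\lup}$, and then to translate the resulting equalities of characters into conjugacy statements about parametrizing matrices through the duality $\M_2(\cO_\ldown)\cong\widehat{\K^{\lup}}$, $x\mapsto\psi_x$, recalled in \autoref{sec:preliminaries}. Since $\lup\ge\ell/2$, the group $\K^{\lup}$ is both normal in $\gol$ and abelian, so for any $g\in\gol$ we have $\K^{\lup}\cap g(\zol\uol)g^{-1}=g\,(\K^{\lup}\cap\zol\uol)\,g^{-1}$, and Mackey's formula together with Frobenius reciprocity (using that $\K^{\lup}$ is abelian) shows that $\psi_X\in\widehat{\K^{\lup}}$ is a constituent of $\mathrm{Res}^{\gol}_{\K^{\lup}}(\vtc)$ if and only if there exists $g\in\gol$ with
\[
\psi_X\big|_{g(\K^{\lup}\cap\zol\uol)g^{-1}}\;=\;{}^{g}\!\big((\chi\otimes\psi_t)\big|_{\K^{\lup}\cap\zol\uol}\big).
\]

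Two local computations feed this. First I would pin down $(\chi\otimes\psi_t)\big|_{\K^{\lup}\cap\zol\uol}$. A direct check gives $\K^{\lup}\cap\zol\uol=\{\I+\varpi^{\lup}\left(\begin{smallmatrix}u&v\\0&u\end{smallmatrix}\right)\mid u,v\in\cO_\ldown\}$, and on such an element $\chi\otimes\psi_t$ takes the value $\chi(1+\varpi^{\lup}u)\psi(\varpi^{\ell-t+\lup}v)=\psi\big(\varpi^{\lup}(\lambda u+\varpi^{\ell-t}v)\big)$, where we used $\chi(1+\varpi^{\lup}x)=\psi(\varpi^{\lup}\lambda x)$ and discarded higher-order terms (this is where $t\le\ell-1$ and $2\lup\ge\ell$ are used to kill the cross terms). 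Since $2\in\cO_\ell^\times$, the right-hand side equals $\psi_B\big|_{\K^{\lup}\cap\zol\uol}$ for $B=\left(\begin{smallmatrix}\lambda/2&0\\\varpi^{\ell-t}&\lambda/2\end{smallmatrix}\right)\in\g(\cO_\ldown)$, because $\mathrm{tr}\big(B\left(\begin{smallmatrix}u&v\\0&u\end{smallmatrix}\right)\big)=\lambda u+\varpi^{\ell-t}v$; note $\varpi^{\ell-t}=0$ in $\cO_\ldown$ precisely when $t\le\lup$, which matches the $\psi_t$-part being trivial there, so this description is uniform in $t$. Second, I would compute the annihilator of $\K^{\lup}\cap\zol\uol$ inside $\M_2(\cO_\ldown)\cong\widehat{\K^{\lup}}$: from $\mathrm{tr}\big(Y\left(\begin{smallmatrix}u&v\\0&u\end{smallmatrix}\right)\big)=(y_{11}+y_{22})u+y_{21}v$ it follows that a matrix $Y$ annihilates this subgroup if and only if $y_{21}=0$ and $y_{11}+y_{22}=0$, so the annihilator is $\{\left(\begin{smallmatrix}c&d\\0&-c\end{smallmatrix}\right)\mid c,d\in\cO_\ldown\}$.

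Combining these, the displayed criterion rewrites as $\psi_{g^{-1}Xg}\big|_{\K^{\lup}\cap\zol\uol}=\psi_B\big|_{\K^{\lup}\cap\zol\uol}$, using the conjugation-equivariance ${}^{g}\psi_Y=\psi_{gYg^{-1}}$ of the parametrization; hence $g^{-1}Xg-B$ lies in the annihilator computed above, i.e.\ $g^{-1}Xg=\left(\begin{smallmatrix}\lambda/2+c&d\\\varpi^{\ell-t}&\lambda/2-c\end{smallmatrix}\right)$ for some $c,d\in\cO_\ldown$. Writing $a=\lambda/2+c$ and $b=d$ exhibits $X$ as $\gol$-conjugate to $\left(\begin{smallmatrix}a&b\\\varpi^{\ell-t}&\lambda-a\end{smallmatrix}\right)$, which is the claim. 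The only genuinely delicate point is the conjugation-equivariance of $x\mapsto\psi_x$ together with the fact that $\psi_X$ determines $X$ only modulo the annihilator of the relevant subgroup — this is exactly what produces the free parameters $a,b$; the rest is routine bookkeeping with congruence subgroups, once one observes that the value $\varpi^{\ell-t}$ of the $(2,1)$-entry is forced by the $\psi_t$-component of the inducing character.
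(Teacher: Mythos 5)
Your proposal is correct and follows essentially the same route as the paper: Mackey's restriction formula applied to $\K^{\lup}$, the computation of $(\chi\otimes\psi_t)$ on $\K^{\lup}\cap\zol\uol$, and a comparison with $\psi_X$ via the duality $\M_2(\cO_{\ldown})\cong\widehat{\K^{\lup}}$. The only cosmetic difference is that you package the comparison as "$g^{-1}Xg-B$ lies in the annihilator of $\K^{\lup}\cap\zol\uol$," whereas the paper directly extracts the two congruences $a_{11}+a_{22}=\lambda$ and $a_{21}=\varpi^{\ell-t}$ modulo $\varpi^{\ldown}$; these are the same computation.
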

	\begin{proof}  
  By Mackey's restriction formula and the definition of $\vtc$ 
\[ \mathrm{Res}_{\K^{\ell_2}}^\gol\mathrm{Ind}_{\zol \uol}^\gol({\chi},\psi_t) \cong \oplus_{g\in \K^{\ell_2}\backslash \gol/\zol \uol } \mathrm{Ind}_{{\K^{\ell_2}}\cap{(\zol \uol})^g}^{\K^{\ell_2}}({\chi},\psi_t)^g.
		\]
		Hence, upto conjugation of $A$, the character $\psi_A$ of $\K^{\ell_2}$ satisfies 
$({\chi},\psi_t)=\psi_A$ on ${\K^{\ell_2}}\cap{\zol \uol}$. 
We have
		\[
		\K^{\lup}\cap \zol  \uol=
		\left\{\begin{pmatrix}
			1+\varpi^{\lup}x&\varpi^{\lup}y\\
			0&1+\varpi^{\lup}x
		\end{pmatrix} \mid x,y \in \co_{\ell} \right\}. 
		\]
  By comparing definition of $({\chi},\psi_t)$ and $\psi_A$  

for $A = (a_{ij}) \in M_2(\cO_{\ldown}),$ 

we obtain the following: 
		\begin{gather}
			a_{11}+a_{22}=\lambda \mod(\varpi^{\ldown}),\label{eq:1a}\\ 
			a_{21}=\varpi^{\ell-t} \mod(\varpi^{\ldown}). 
			\label{eq:1c}
		\end{gather} 

Hence, our claim is proved.
	\end{proof}
For $t < \ell$, it is clear that any regular matrix $A=\left(\begin{smallmatrix}   
 a & b \\ 
\varpi^{\ell-t} & \lambda - a
\end{smallmatrix}\right)$ for $\lambda, a, b \in \Ol$ is either a $\ss$ or a $\sns$ matrix and not a cuspidal one. Therefore, the following holds. 
\begin{corollary}
\label{cor:cuspidal-multiplicity-zero=vtc}
 For $t < \ell,$ we have $\mathrm{Hom}_\gol(\rho, \vtc) = 0$ for any $\rho \in \mathrm{Irr}^{\mathrm{cus}}(\gol)$.    
\end{corollary}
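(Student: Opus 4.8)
The plan is to read the corollary directly off \autoref{type of the matrix A}, using only the elementary fact that the type (cuspidal, split semisimple, split non-semisimple) of a regular character $\psi_X$ depends solely on the image of $X$ in $\g(\mathbb F_q)$ and is therefore invariant under conjugation. First I would observe that, since every representation in sight is finite-dimensional over $\mathbb C$ and hence completely reducible, $\mathrm{Hom}_\gol(\rho,\vtc)\neq 0$ is equivalent to $\rho$ being an irreducible constituent of $\vtc$; in that case $\rho|_{\K^{\lup}}$ is a subrepresentation of $\vtc|_{\K^{\lup}}$, so the two restrictions to the abelian congruence subgroup $\K^{\lup}$ share a one-dimensional constituent $\psi_X$.

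Next I would analyse that common constituent from the two sides. On the one hand, by the definition of a cuspidal representation together with the construction recalled in \autoref{sec:E.construction} and \autoref{O.construction} (and using that $\K^{\lup}$ is normal in $\gol$), a cuspidal $\rho$ has $\rho|_{\K^{\lup}}$ equal to a sum of characters $\psi_X$ with $X$ lying in the $\gol$-orbit of the cuspidal matrix defining $\rho$; since the characteristic polynomial of the image in $\g(\mathbb F_q)$ is a conjugacy invariant, every such $X$ is again cuspidal. On the other hand, \autoref{type of the matrix A} tells us that any $\psi_X$ occurring in $\vtc|_{\K^{\lup}}$ has $X$ conjugate to $A=\left(\begin{smallmatrix} a & b \\ \varpi^{\ell-t} & \lambda-a \end{smallmatrix}\right)$ for some $a,b\in\cO_\ldown$.

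It then remains to note the one computation that makes the two conclusions incompatible: since $t<\ell$ we have $\ell-t\geq 1$, so $\varpi^{\ell-t}\equiv 0\pmod{\varpi}$ and the image of $A$ in $\g(\mathbb F_q)$ is the upper-triangular matrix $\left(\begin{smallmatrix} \bar a & \bar b \\ 0 & \overline{\lambda-a}\end{smallmatrix}\right)$, whose characteristic polynomial $(T-\bar a)(T-\overline{\lambda-a})$ splits over $\mathbb F_q$; hence neither $A$ nor any of its conjugates is cuspidal. This contradicts the preceding paragraph, so no cuspidal $\rho$ can be a constituent of $\vtc$, which is the assertion. I do not expect a genuine obstacle here: the substance is entirely contained in \autoref{type of the matrix A}, and what is left is the routine Clifford-theoretic point that a cuspidal representation is detected on $\K^{\lup}$ (equivalently on $\K^{\ell-1}$) by the type of the characters appearing in its restriction.
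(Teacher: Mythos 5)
Your proposal is correct and coincides with the paper's own argument: the paper deduces the corollary directly from \autoref{type of the matrix A} by the same observation that, for $t<\ell$, the matrix $A=\left(\begin{smallmatrix} a & b \\ \varpi^{\ell-t} & \lambda-a \end{smallmatrix}\right)$ reduces to an upper-triangular matrix over $\mathbb F_q$ and hence cannot be cuspidal. You merely spell out the Clifford-theoretic detection of cuspidality on $\K^{\lup}$, which the paper leaves implicit in its definitions.
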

Further, we observe that $A = \left(\begin{smallmatrix} a & b \\ \varpi^{\ell-t} & \lambda - a \end{smallmatrix}\right)$ is $\ss$  if and only if $\lambda-2a\neq 0 \mod (\varpi)$ and $\sns$ if and only if $\lambda-2a= 0 \mod (\varpi)$.
We now proceed to determine the multiplicity of $\ss$ and $\sns$ constituents of $V^t_\chi.$ We will consider these cases separately in the following sections. 
\subsection{ Split semisimple case}
As mentioned earlier, \autoref{thm:multiplicity-bounds} is already known for $t = \ell$, see \cite{MR4399251}. We will now assume that $0 \leq t < \ell$.  Let $\rho$ be a $ss$ representation of $\gol$. By \autoref{lem:split-semisimple from borel induction}, there exist characters $\chi_1, \chi_2$ of $\Ol^\times$  such that $\chi_1 \chi_2^{-1}$ is an injective character of $\cO_\ell^\times$ and $\rho \cong \ind_\bol^\gol(\chi_1,\chi_2)$. Let  $m_\rho = \langle\vtc,\rho \rangle_{\gol}.$ We assume that $m_\rho \neq 0.$  By Mackey's restriction formula, we have
    \begin{equation}
\label{eq:Mackey-intertwiner-SS}   
{\langle\vtc,\ind_\bol^\gol(\chi_1,\chi_2)\rangle}_{\gol}=\oplus_{g\in \bol\backslash \gol/\zol \uol}{\langle {(\chi\otimes\psi_t)}^g,(\chi_1,\chi_2)\rangle}_{\zol \uol^g\cap \bol}.
\end{equation}
Since $m_\rho \neq 0$, we must have 
\begin{equation}
\label{eqn: ss Z character equality}
    (\chi_1,\chi_2)|_{\zol}=\chi|_{\zol}.
\end{equation}

\begin{lemma}
\label{info about B_G_U}
For any $j \geq 0$, let $\rmf(j) = \max\{\ell - j, 0\}$ and $ \Delta =  \{\left(\begin{smallmatrix}
       1 & 0\\
       \varpi^jz & 1
\end{smallmatrix}\right),\left(\begin{smallmatrix}
       0 & 1\\
       1 & 0
\end{smallmatrix}\right)\mid  z \in \co_\ell^\times, 1\leq j\leq \ell \}$. Then we have
\begin{enumerate}
    \item $\bol \backslash \gol /\zol \uol \subseteq \Delta$. 
   
   \item For $j<\ell, z\in \co_\ell^\times$ and $y\in \co_\ell$, consider $g_1= \left( \begin{smallmatrix}
            1&0\\
            \varpi^jz&1
        \end{smallmatrix} \right)$ and $g_2=\left( \begin{smallmatrix}
            1&0\\
            \varpi^jz+\varpi^{2j}y&1
        \end{smallmatrix} \right)$.  Then $$\bol (g_1) \zol\uol=\bol(g_2)
\zol \uol.$$
         \item $(\zol \uol)^g\cap \bol = \{ aI_2 +  
       \left(\begin{smallmatrix}
           -\varpi^{j} xz & x\\
           0 & \varpi^{j}  xz
       \end{smallmatrix}\right) \mid  a \in \co_\ell^\times, \mathrm{val}(x) \geq \rmf(2j)  \}$   for  $g=\left(\begin{smallmatrix}
           1 & 0\\
           \varpi^j z & 1
        \end{smallmatrix}\right),$ and $(\zol \uol)^g\cap \bol = \zol$ for $g=\left(\begin{smallmatrix}
           0 & 1\\
           1 & 0
       \end{smallmatrix}\right).$
        
\end{enumerate} 
   
\end{lemma}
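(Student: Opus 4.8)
The plan is to prove the three parts by direct matrix computations, bootstrapping from the known Bruhat-type decomposition $\bol\backslash\gol/\bol=\{\left(\begin{smallmatrix}1&0\\\varpi^j&1\end{smallmatrix}\right),\,\left(\begin{smallmatrix}0&1\\1&0\end{smallmatrix}\right)\mid 1\le j\le\ell\}$ from \cite{MR2584957}*{Lemma~4.1}, which is already invoked in \autoref{lem:split-semisimple from borel induction}. For (1), since $\zol$ is central and contained in $\bol$ one has $\bol g\,\zol\uol=\bol g\uol$, so it is enough to show $\bol\backslash\gol/\uol\subseteq\Delta$. Each $\bol$--$\uol$ double coset lies in a $\bol$--$\bol$ double coset, so I examine how the two Bruhat cells break up under right multiplication by $\uol$: writing $\bol=\T\uol$ and using $w_0\T w_0^{-1}=\T$ with $w_0=\left(\begin{smallmatrix}0&1\\1&0\end{smallmatrix}\right)$ gives $\bol w_0\bol=\bol w_0\uol$, a single coset with representative $w_0\in\Delta$; and the identity $\left(\begin{smallmatrix}1&0\\\varpi^j&1\end{smallmatrix}\right)\left(\begin{smallmatrix}t_1&0\\0&t_2\end{smallmatrix}\right)=\left(\begin{smallmatrix}t_1&0\\0&t_2\end{smallmatrix}\right)\left(\begin{smallmatrix}1&0\\t_1t_2^{-1}\varpi^j&1\end{smallmatrix}\right)$ shows $\bol\left(\begin{smallmatrix}1&0\\\varpi^j&1\end{smallmatrix}\right)\bol=\bigcup_{z\in\co_\ell^\times}\bol\left(\begin{smallmatrix}1&0\\z\varpi^j&1\end{smallmatrix}\right)\uol$, whose representatives again lie in $\Delta$. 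This proves (1).

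For (2), if $2j\ge\ell$ then $\varpi^{2j}=0$ in $\co_\ell$ and $g_1=g_2$, so I may assume $2j<\ell$. A one-line computation gives $g_1^{-1}g_2=\left(\begin{smallmatrix}1&0\\\varpi^{2j}y&1\end{smallmatrix}\right)$, and I look for $b=\left(\begin{smallmatrix}1&e\\0&d\end{smallmatrix}\right)\in\bol$ with $g_1^{-1}b\,g_2\in\zol\uol$. Expanding $g_1^{-1}b\,g_1$ and multiplying by $g_1^{-1}g_2$, the requirements that the $(2,1)$-entry vanish and that the two diagonal entries agree collapse --- after factoring $\varpi^j z+\varpi^{2j}y=\varpi^j(z+\varpi^j y)$ and extracting a common $\varpi^{2j}$ --- to the single congruence $e\,(z+\varpi^j y)^2+y\equiv0\pmod{\varpi^{\ell-2j}}$ together with $d=1+e\,(2\varpi^j z+\varpi^{2j}y)$. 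Since $z+\varpi^j y\in\co_\ell^\times$, one solves for $e$; then $d\equiv1\pmod{\varpi^j}$ is a unit, so $b\in\bol$, whence $g_2\in\bol g_1\,\zol\uol$ and the two double cosets coincide. I expect this algebraic collapse to the clean congruence to be the only step that needs genuine care.

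For (3), I conjugate directly. For $g=\left(\begin{smallmatrix}0&1\\1&0\end{smallmatrix}\right)$, the group $(\zol\uol)^g$ is $\zol$ times the lower unipotent subgroup, and intersecting with the upper-triangular $\bol$ annihilates the unipotent part, leaving exactly $\zol$. For $g=\left(\begin{smallmatrix}1&0\\\varpi^j z&1\end{smallmatrix}\right)$, I write a typical element of $\zol\uol$ as $aI_2+\left(\begin{smallmatrix}0&ax\\0&0\end{smallmatrix}\right)$ with $a\in\co_\ell^\times$; conjugating by $g$ yields a matrix with $(2,1)$-entry $-ax\varpi^{2j}z^2$, which vanishes precisely when $\val(x)\ge\ell-2j$, i.e.\ $\val(x)\ge\rmf(2j)$, and in that case the conjugate equals $aI_2+\left(\begin{smallmatrix}-\varpi^j xz&x\\0&\varpi^j xz\end{smallmatrix}\right)$ after the substitution $ax\mapsto x$. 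This is exactly the asserted set, which completes the proof.
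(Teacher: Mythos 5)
Your proof is correct, and it follows essentially the same route as the paper: part (2) is the same explicit matrix manipulation (you solve the congruence $e(z+\varpi^j y)^2+y\equiv 0 \pmod{\varpi^{\ell-2j}}$ for $e$, which recovers exactly the element $x=-yz^{-1}(z+\varpi^jy)^{-1}$ the paper writes down), and part (3) is the direct conjugation computation the paper leaves to the reader. The only divergence is cosmetic: for part (1) the paper simply cites a reference, whereas you derive the coset representatives from the Bruhat-type decomposition $\bol\backslash\gol/\bol$ already used in \autoref{lem:split-semisimple from borel induction}, which is a perfectly valid (and self-contained) substitute; note only that your use of $(z+\varpi^j y)^{-1}$ and the unit-ness of $d$ in part (2) implicitly relies on $j\geq 1$, exactly as the paper's own formulas do.
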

\begin{proof} Here (1) follows from \cite{MR2543507}. For (2) take $x=-yz^{-1}{(z+\varpi^jy)}^{-1}$, $x_2= 1+\varpi^jyz^{-1}$, $x_3=-x$ and $x_1=1-\varpi^jzx_3$. Then we have,
        $$\begin{pmatrix}
            x_1&x_3\\ 0&x_2
        \end{pmatrix}\begin{pmatrix}
            1&0\\
            \varpi^jz&1
        \end{pmatrix}\begin{pmatrix}
          1&x\\0&1\end{pmatrix}=
 \begin{pmatrix}
            1&0\\
            \varpi^jz+\varpi^{2j}y & 1
        \end{pmatrix}.$$
        This proves (2).
 Further, (3) follows from the direct computations.   
\end{proof}

    \begin{lemma} \label{lem: ss multiplicity less than 2}
For characters $\chi_1, \chi_2$ of $\Ol^\times$  such that $(\chi_1,\chi_2)\in C$ we have $$\langle \vtc ,\ind_\bol^\gol(\chi_1,\chi_2)\rangle_{\gol}\leq 2.$$
   \end{lemma}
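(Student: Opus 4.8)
The plan is to start from the Mackey double-coset decomposition \eqref{eq:Mackey-intertwiner-SS} and bound the contribution of each double coset using the explicit description of the double cosets and the intersections $(\zol\uol)^g\cap\bol$ from \autoref{info about B_G_U}. By part (1) of that lemma the relevant $g$ range over $\Delta$, i.e. over the Weyl element $w=\mat{0}{1}{1}{0}$ together with the unipotent-type representatives $g_j(z)=\mat{1}{0}{\varpi^j z}{1}$ for $1\le j\le\ell$ and $z\in\cO_\ell^\times$; and by part (2) the coset $\bol g_j(z)\zol\uol$ depends on $\varpi^j z$ only modulo $\varpi^{2j}$, which will be used to collapse many of the $z$'s into a single double coset. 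The strategy is: (a) dispose of $w$; (b) show that only the double cosets with $2j\ge\ell$ (equivalently $j\ge\lup$) can contribute, and that among those, at most two contribute a nonzero term, each of multiplicity $1$; (c) conclude.

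First I would handle $g=w$. Here $(\zol\uol)^w\cap\bol=\zol$ by part (3), so the local term is $\langle(\chi\otimes\psi_t)^w,(\chi_1,\chi_2)\rangle_{\zol}$, which is $0$ or $1$ according to whether $(\chi_1,\chi_2)|_{\zol}=\chi|_{\zol}$; by \eqref{eqn: ss Z character equality} this equals $1$. So $w$ contributes exactly $1$.

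Next, for $g=g_j(z)$ with $j<\ell$, part (3) gives $(\zol\uol)^{g}\cap\bol=\{aI_2+\mat{-\varpi^j xz}{x}{0}{\varpi^j xz}\mid a\in\cO_\ell^\times,\ \val(x)\ge\rmf(2j)\}$. On this group I would compare the characters: the central parameter $a$ is matched because of \eqref{eqn: ss Z character equality}, so the local inner product reduces to comparing $(\chi\otimes\psi_t)^{g_j(z)}$ and $(\chi_1,\chi_2)$ on the one-parameter family $u_x:=I_2+\mat{-\varpi^j xz}{x}{0}{\varpi^j xz}$, $\val(x)\ge\rmf(2j)$. On $u_x$ the character $(\chi_1,\chi_2)$ evaluates (to leading order) via $\chi_1\chi_2^{-1}$ on the diagonal entry $1-\varpi^j xz$, while $(\chi\otimes\psi_t)^{g_j(z)}$ picks up a $\psi$-factor of the shape $\psi(\varpi^{\ell-t}\cdot(\text{entry coming from conjugating }\psi_t))$ together with the central piece. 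The key numerics: since $\chi_1\chi_2^{-1}$ is injective, its restriction to $1+\varpi^{\ell-1}\cO_\ell$ is nontrivial, and the additive character $\psi$ is primitive; writing $\chi_1\chi_2^{-1}(1+\varpi^{\lup}y)=\psi(\varpi^{\lup}(\lambda_1-\lambda_2)y)$ with $\lambda_1-\lambda_2\in\cO_\ell^\times$, the equality of the two characters on the whole family $\{u_x\}$ forces, after matching the $\varpi$-powers, two congruence conditions: one on $z$ (of the form $\varpi^j z\equiv(\text{something fixed by }\lambda_1,\lambda_2,\lambda,t)$) and, crucially, the constraint $2j\ge\ell$ for a solution to exist at all (if $2j<\ell$ the leading $\varpi^{j}$-term on the $\chi_1\chi_2^{-1}$ side cannot be cancelled by the $\varpi^{\ell-t}$ or $\varpi^{\lup}$-terms on the other side, since $\val(x)\ge\rmf(2j)=\ell-2j$ is too large). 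Thus no $g_j(z)$ with $j<\lup$ contributes. For $j\ge\lup$ the collapsing from part (2) (cosets depend on $\varpi^j z\bmod\varpi^{2j}$, and $2j\ge\ell$ so $\varpi^j z$ is determined modulo $0$, i.e. the residue class of $\varpi^j z$ in $\cO_\ell$ is a genuine invariant) means these double cosets are indexed by the value $\varpi^j z\in\varpi^{\lup}\cO_\ell\setminus\varpi^{\lceil\ell/2\rceil+1}\cO_\ell$ roughly speaking; the character-matching equation then has at most one solution for $\varpi^j z$ given $(\chi_1,\chi_2)$, and when it has a solution the corresponding local term is $1$ (the group on which the two characters now agree has index giving inner product exactly $1$). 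Hence the $g_j$-part contributes at most $1$.

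Adding the $w$-contribution ($=1$) and the $g_j$-contribution ($\le 1$) gives $\langle\vtc,\ind_\bol^\gol(\chi_1,\chi_2)\rangle_{\gol}\le 2$, as claimed. The main obstacle I anticipate is the bookkeeping in step (b): carefully tracking, for each $j$, the exact $\varpi$-valuation of each matrix entry of $u_x$ after conjugation by $g_j(z)$, expanding $\chi_1\chi_2^{-1}$ and the $\psi$-factors to high enough order, and verifying that the resulting system of congruences in the unknown $\varpi^j z$ is (i) unsolvable when $j<\lup$ and (ii) has a unique solution modulo the coset relation when $j\ge\lup$. This is where the injectivity of $\chi_1\chi_2^{-1}$ and primitivity of $\psi$ are used essentially, and it is the only genuinely computational part; everything else is a formal application of Mackey theory plus \autoref{info about B_G_U}.
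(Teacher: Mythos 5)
Your overall framework (Mackey's formula plus \autoref{info about B_G_U}: one unit of multiplicity from the Weyl element, at most one more from the representatives $g_j(z)=\left(\begin{smallmatrix}1&0\\ \varpi^j z&1\end{smallmatrix}\right)$, each local term being $\leq 1$ since both sides are one-dimensional) is the same as the paper's. But step (b) contains a genuine error: it is \emph{not} true that only the $g_j(z)$ with $2j\geq\ell$ can contribute. Writing the matching condition on $(\zol\uol)^{g_j(z)}\cap\bol$ out explicitly, with $\chi_1\chi_2^{-1}(1+\varpi^{\lup}y)=\psi(\varpi^{\lup}\nu y)$ and $x$ ranging over $\varpi^{\max\{\ell-2j,0\}}\cO_\ell$, it reduces to a single congruence $\varpi^{\ell-t}\equiv \varpi^{j}\nu z \pmod{\varpi^{\min\{2j,\ell\}}}$, whose unique solution (with $z$ a unit) is $j=\ell-t$, $z\equiv\nu^{-1}$; this is exactly the representative recorded in \autoref{cor:coset-form-BGU} and used in \autoref{prop: ss in indzu-G}. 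For $t>\ldown$ --- e.g.\ $t=\ell-1$ with $\ell\geq 3$, where $j=1$ --- this contributing $j$ satisfies $2j<\ell$, contradicting your claimed constraint. The heuristic you give (that for $2j<\ell$ the $\varpi^{j}$-term on the $\chi_1\chi_2^{-1}$ side ``cannot be cancelled'') fails because both sides of the matching condition are linear in the \emph{same} variable $x$ restricted to $\varpi^{\ell-2j}\cO_\ell$: the restriction on $x$ weakens both congruences equally, and cancellation occurs precisely when $j=\ell-t$, irrespective of whether $2j\geq\ell$. Since your argument only establishes uniqueness of the contributing coset within the range $j\geq\lup$ and (incorrectly) discards the range $j<\lup$, the bound $\leq 2$ does not follow as written.

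The conclusion is correct and your approach is repairable by replacing ``$2j\geq\ell$'' with the correct determination ``$j=\ell-t$'', but note that the paper sidesteps the $j$-by-$j$ valuation analysis entirely. Given two contributing representatives $g_k=\left(\begin{smallmatrix}1&0\\ \varpi^{j_k}z_k&1\end{smallmatrix}\right)$ with $j_1\leq j_2$, it evaluates both matching conditions at elements $X_k$ built from the same $x\in\varpi^{\ell-2j_1}\cO_\ell$, observes that the two $\chi\otimes\psi_t$-sides agree, and deduces $\chi_1\chi_2^{-1}\bigl(1+(\varpi^{j_2}z_2-\varpi^{j_1}z_1)x\bigr)=1$ for all such $x$; injectivity of $\chi_1\chi_2^{-1}$ then forces $\varpi^{j_2}z_2-\varpi^{j_1}z_1\in(\varpi^{2j_1})$, so $g_1$ and $g_2$ lie in the same double coset by \autoref{info about B_G_U}(2). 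This comparison trick proves at once that at most one non-Weyl double coset contributes, without ever identifying which one, and is the cleaner route.
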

\begin{proof}   We use \autoref{eq:Mackey-intertwiner-SS} and \autoref{info about B_G_U} to prove this result. For $g=\left(\begin{smallmatrix}
        0 & 1\\ 1 & 0
    \end{smallmatrix}\right) \in \Delta$, by $\zol \uol^g\cap \bol =\zol$ and \autoref{eqn: ss Z character equality}, we get
    \begin{equation}
          \chi\otimes\psi_t^g(x)=\chi(x)=(\chi_1,\chi_2)(x), \label{eqn: } 
    \end{equation}
    for all $x \in \co_\ell^{\times}.$ Therefore it is enough to show that among the remaining $g\in \Delta$, atmost one more can give a non-zero intertwiner. Assume that 
  $g_k=\left(\begin{smallmatrix}
           1 & 0\\
           \varpi^{j_k} z_k & 1
     \end{smallmatrix}\right)$ for $1\leq k\leq 2$ and satisfies 
       \begin{equation}
       \label{eqn: ss g_k satisfies on ZU^gk cap B}
         \chi\otimes\psi_t^{g_k} =(\chi_1,\chi_2) \text{ on } (\zol\uol)^{g_k}\cap \bol, \text{ for } k\in \{1,2\}.    
        \end{equation}
       Without loss of generality, assume that $j_1 \leq j_2.$ Consider $x = \varpi^{\ell - 2 j_1}x' \in \cO_\ell$ such that $x' \in \cO_\ell.$ Then $-\varpi^{2{j_k}}z_k^2x=0$. 
     By \autoref{info about B_G_U}, we have  $$X_k =\begin{pmatrix}
         1-\varpi^{j_k}z_kx&x\\
         0&1+\varpi^{j_k}z_kx
     \end{pmatrix}\in (\zol \uol)^{g_k}\cap \bol,$$ for $k \in \left\{ 1,2 \right\}$. We note that $\chi \otimes \psi_t^{g_1}(X_1) = \chi \otimes \psi_t^{g_2}(X_2)$. Hence by \autoref{eqn: ss g_k satisfies on ZU^gk cap B}, we must have $ (\chi_1,\chi_2)(X_1) =(\chi_1,\chi_2) (X_2)$. This is equivalent to
     $\chi_1\chi_2^{-1}\{1+(\varpi^{j_2}z_2-\varpi^{j_1}z_1)x\} =1,$ where $x = \varpi^{\ell - 2 j_1}x'$ with an arbitrary $x'$. Since $\chi_1 \chi_2^{-1}$ is injective, we obtain  $\varpi^{\ell-2{j_1}}(\varpi^{j_2}z_2-\varpi^{j_1}z_1)=0$. In other words $\varpi^{j_2}z_2-\varpi^{j_1}z_1=\varpi^{2{j_1}}y$ for some $y\in \co_\ell$.
        Now by \autoref{info about B_G_U}(2), we have $\bol g_1 \zol\uol=\bol g_2 \zol \uol.$ This proves the lemma.
     \end{proof}

\begin{proposition}
\label{prop: ss in indzu-G}
Let $\rho\in \mathrm{Irr}^{\mathrm{ss}}(\gol)$ such that $m_\rho \neq 0.$  Then  
$\mathrm{dim}_\mathbb C(\mathrm{Hom}_{\gol}(\rho, \vtc)) = 2.$
\end{proposition}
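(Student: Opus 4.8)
The plan is to combine the upper bound $\langle\vtc,\rho\rangle_{\gol}\le 2$ from \autoref{lem: ss multiplicity less than 2} with a matching lower bound, obtained by exhibiting two distinct double cosets in the Mackey decomposition \eqref{eq:Mackey-intertwiner-SS} along which the relevant one-dimensional characters agree. Write $\rho\cong\ind_{\bol}^{\gol}(\chi_1,\chi_2)$ with $(\chi_1,\chi_2)\in C$ by \autoref{lem:split-semisimple from borel induction}, and fix $\lambda_1,\lambda_2\in\cO_\ell$ with $\chi_i(1+\varpi^{\lup}x)=\psi(\varpi^{\lup}\lambda_i x)$ for all $x\in\cO_\ell$ and $i=1,2$; these exist because $\psi$ is primitive, and since $(\chi_1,\chi_2)\in C$ the difference $\lambda_1-\lambda_2$ is a unit. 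By \autoref{info about B_G_U}(1) it is enough to look for contributing cosets among the representatives in $\Delta$.

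The coset of $w=\left(\begin{smallmatrix}0&1\\1&0\end{smallmatrix}\right)$ always contributes: by \autoref{info about B_G_U}(3) we have $(\zol\uol)^{w}\cap\bol=\zol$, and on $\zol$ both $(\chi\otimes\psi_t)^{w}$ and $(\chi_1,\chi_2)$ equal $\chi$ by \eqref{eqn: ss Z character equality}, so the corresponding summand of \eqref{eq:Mackey-intertwiner-SS} is one dimensional. For a second coset I would take $g_0=\left(\begin{smallmatrix}1&0\\\varpi^{\ell-t}z&1\end{smallmatrix}\right)$ with $z\in\cO_\ell^\times$ still to be chosen. By \autoref{info about B_G_U}(3) a general element of $(\zol\uol)^{g_0}\cap\bol$ has the form $a\left(\begin{smallmatrix}1-\varpi^{\ell-t}yz& y\\0&1+\varpi^{\ell-t}yz\end{smallmatrix}\right)$ with $a\in\cO_\ell^\times$ and $\val(y)\ge\rmf(2(\ell-t))$; on it $(\chi\otimes\psi_t)^{g_0}$ takes the value $\chi(a)\,\psi(\varpi^{\ell-t}y)$, while $(\chi_1,\chi_2)$ takes the value $\chi(a)\,\chi_1(1-\varpi^{\ell-t}yz)\,\chi_2(1+\varpi^{\ell-t}yz)$, using \eqref{eqn: ss Z character equality} for the scalar part. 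Splitting into the cases $2(\ell-t)\ge\ell$ and $2(\ell-t)<\ell$, one checks that $\val(y)\ge\rmf(2(\ell-t))$ forces $\varpi^{\ell-t}yz\in\varpi^{\lup}\cO_\ell$, so that $\chi_1(1-\varpi^{\ell-t}yz)\,\chi_2(1+\varpi^{\ell-t}yz)=\psi(\varpi^{\ell-t}yz(\lambda_2-\lambda_1))$; comparing with $\psi(\varpi^{\ell-t}y)$ then reduces the identity $(\chi\otimes\psi_t)^{g_0}=(\chi_1,\chi_2)$ on $(\zol\uol)^{g_0}\cap\bol$ to the single congruence $z(\lambda_2-\lambda_1)\equiv 1\pmod{\varpi^{\min(t,\ell-t)}}$. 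Since $\lambda_2-\lambda_1$ is a unit, this congruence has a solution $z\in\cO_\ell^\times$, which we fix.

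Finally, $w$ and $g_0$ lie in distinct double cosets of $\bol\backslash\gol/\zol\uol$: the coset of $w$ is the big Bruhat cell $\bol w\bol$, whose elements all have unit $(2,1)$-entry, whereas the $(2,1)$-entry of $g_0$ is $\varpi^{\ell-t}z$, a non-unit for $t<\ell$. Hence \eqref{eq:Mackey-intertwiner-SS} has at least two nonzero summands, so $\dim_{\mathbb C}\mathrm{Hom}_{\gol}(\rho,\vtc)\ge 2$, and \autoref{lem: ss multiplicity less than 2} forces equality. I expect the only real work to be the case analysis used to confirm $\varpi^{\ell-t}yz\in\varpi^{\lup}\cO_\ell$ over the whole allowed range of $y$ and to pin down the exact modulus in the congruence on $z$ — a computation parallel, in the opposite direction, to the one already carried out in the proof of \autoref{lem: ss multiplicity less than 2}.
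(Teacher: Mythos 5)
Your proposal is correct and follows essentially the same route as the paper: the upper bound is quoted from \autoref{lem: ss multiplicity less than 2}, and the lower bound is obtained by pairing the antidiagonal coset (which contributes via the central character identity \eqref{eqn: ss Z character equality}) with a coset $\left(\begin{smallmatrix}1&0\\\varpi^{\ell-t}z&1\end{smallmatrix}\right)$ where $z$ is essentially $(\lambda_1-\lambda_2)^{-1}$ --- the paper's $\nu^{-1}$, up to an immaterial sign in the congruence. The case split you defer ($2t\le\ell$ versus $2t>\ell$) is exactly the computation the paper also leaves implicit, and your verification that the two double cosets are distinct is sound.
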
 

\begin{proof}

By \autoref{sec:construction-SS} and \autoref{lem: ss multiplicity less than 2}, it is enough to prove that
$\langle\vtc,\ind_{\bol}^{\gol}(\chi_1,\chi_2)
\rangle \geq 2$.
For characters $\chi_1,\chi_2$ such that $(\chi_1,\chi_2)\in C$, let $\nu\in \co_\ell^{\times} $ such that $\chi_1\chi_2^{-1}(1+\varpi^{\lup}x)=\psi({\varpi^{\lup}\nu x})$. Consider $g=\left(\begin{smallmatrix}
    1 & 0\\
    \varpi^{\ell-t}\nu^{-1} & 1
\end{smallmatrix}\right)$. By considering $2t\leq \ell$ and $2t>\ell$ cases separately, we can easily deduce that 
 $\chi\otimes\psi_t^g=(\chi_1,\chi_2)$ on $\zol\uol^g\cap \bol$. 
This proves the claim and completes the proof of the proposition.  \end{proof}

\begin{corollary}
\label{cor:coset-form-BGU} Let $t < \ell$ and $\chi, \chi_1, \chi_2$ as in \autoref{lem: ss multiplicity less than 2}. Then $\langle (\chi \otimes \psi_t^g, (\chi_1, \chi_2) \rangle_{\zol\uol^g \cap \bol} \neq 0$ for $g\in \bol \backslash \gol / \zol\uol$ implies either $g = \left(\begin{smallmatrix}
    1 & 0 \\ 0 & 1
\end{smallmatrix}\right)$ or $g = \left(\begin{smallmatrix}
    1 & 0 \\ \varpi^{\ell - t} \nu^{-1} & 1
\end{smallmatrix}\right)$ where $\nu$ is such that $\chi_1\chi_2^{-1}(1 + \varpi^{\lup}x) = \psi(\varpi^{\lup}\nu x).$    
\end{corollary}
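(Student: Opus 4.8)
The plan is to extract this statement from the proofs of \autoref{lem: ss multiplicity less than 2} and \autoref{prop: ss in indzu-G}, which between them already \emph{locate} the contributing double cosets rather than merely bound their number. First I would use \autoref{info about B_G_U}(1) to take $g$ in $\Delta$, so that $g$ represents either the Weyl coset $\left(\begin{smallmatrix}0&1\\1&0\end{smallmatrix}\right)$ or a coset of $g_j(z)=\left(\begin{smallmatrix}1&0\\\varpi^{j}z&1\end{smallmatrix}\right)$ with $z\in\cO_\ell^\times$ and $1\le j\le\ell$ (the class $j=\ell$ being the identity). For the Weyl representative, \autoref{info about B_G_U}(3) gives $(\zol\uol)^{g}\cap\bol=\zol$, so the corresponding local term is $\langle\chi|_{\zol},(\chi_1,\chi_2)|_{\zol}\rangle$, which equals $1$ by \eqref{eqn: ss Z character equality}; thus the Weyl coset always contributes. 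It then remains to show that among the $g_j(z)$ only the coset of $\left(\begin{smallmatrix}1&0\\\varpi^{\ell-t}\nu^{-1}&1\end{smallmatrix}\right)$ can carry a nonzero local term.

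For this I would re-run the closing computation in the proof of \autoref{lem: ss multiplicity less than 2}. By \autoref{info about B_G_U}(3), the group $(\zol\uol)^{g_j(z)}\cap\bol$ contains the one-parameter subgroup of matrices $\left(\begin{smallmatrix}1-\varpi^{j}xz&x\\0&1+\varpi^{j}xz\end{smallmatrix}\right)$ with $\val(x)\ge\rmf(2j)$. Requiring that $(\chi\otimes\psi_t)^{g_j(z)}$ agree with $(\chi_1,\chi_2)$ on this subgroup (a necessary condition for a nonzero local term), treating the cases $2j\le\ell$ and $2j>\ell$ separately as there, and invoking injectivity of $\chi_1\chi_2^{-1}$, forces $\varpi^{j}z\equiv\varpi^{\ell-t}\nu^{-1}\pmod{\varpi^{2j}}$; by \autoref{info about B_G_U}(2) this congruence identifies the double coset, i.e.\ $\bol\,g_j(z)\,\zol\uol=\bol\left(\begin{smallmatrix}1&0\\\varpi^{\ell-t}\nu^{-1}&1\end{smallmatrix}\right)\zol\uol$. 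Finally \autoref{prop: ss in indzu-G} shows this last coset does carry a nonzero local term. A shortcut avoiding the congruence: \autoref{lem: ss multiplicity less than 2} already bounds the sum in \eqref{eq:Mackey-intertwiner-SS} by $2$, while the Weyl coset above together with the coset exhibited in \autoref{prop: ss in indzu-G} already contribute $2$, so these two are exactly the cosets on which the local term is nonzero.

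I expect the only real work to be the bookkeeping in the second step: selecting the right one-parameter subgroup inside $(\zol\uol)^{g_j(z)}\cap\bol$, carrying the truncation exponent $\rmf(2j)$ through the comparison of characters, and handling the $2j\le\ell$ versus $2j>\ell$ dichotomy. Since this is a direct reuse of the end of the proof of \autoref{lem: ss multiplicity less than 2} combined with \autoref{prop: ss in indzu-G}, no genuinely new difficulty arises; the corollary is essentially a sharpening of the accounting already carried out there.
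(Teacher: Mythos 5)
Your proposal is correct and follows the same route the paper intends: the corollary is stated without proof precisely because it is the byproduct of the proof of \autoref{lem: ss multiplicity less than 2} (at most two double cosets, and any second contributing coset is pinned down by the injectivity of $\chi_1\chi_2^{-1}$ together with \autoref{info about B_G_U}(2)) combined with the explicit coset exhibited in the proof of \autoref{prop: ss in indzu-G}; your counting shortcut is also valid since the two named cosets are distinct for $t<\ell$ and each local term is at least $1$. One remark: your argument identifies the always-contributing coset as the Weyl representative $\left(\begin{smallmatrix}0&1\\1&0\end{smallmatrix}\right)$, not the identity appearing in the printed statement; for $1\le t\le \ell-1$ the identity coset in fact contributes $0$ because $(\chi_1,\chi_2)$ is trivial on $\uol$ while $\psi_t$ is not, so the printed $\left(\begin{smallmatrix}1&0\\0&1\end{smallmatrix}\right)$ is a typo for the Weyl element -- consistent with how the corollary is invoked later, where the cosets are listed as $g_1=\left(\begin{smallmatrix}0&1\\1&0\end{smallmatrix}\right)$ and $g_2=\left(\begin{smallmatrix}1&0\\\varpi^{\ell-t}\delta^{-1}&1\end{smallmatrix}\right)$. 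Your proof therefore establishes the corrected (and actually used) form of the corollary.
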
 

We now describe the number of $\ss$ and $\sns$ constituents of $V_\chi^t.$ Let $n_{\mathrm{ss}}(V_\chi^t)$  and $n_{\mathrm{sns}}(V_{\chi}^t)$ denote the number (counted with multiplicity) of $\ss$ and $\sns$ constituents of $\vtc$.

\begin{corollary}
\label{coro:number-ss-sns-vtc}
For the representation $\vtc$ of $\gol,$ we have the following:
\begin{enumerate}
\item $n_{\mathrm{ss}} = (q-1)^2 q^{\ell-2}.$
\item $n_{\mathrm{sns}}=q^{\ell-2}(q-1).$
\end{enumerate}
    
\end{corollary}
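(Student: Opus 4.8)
The plan is a dimension count that reduces the whole statement to computing $n_{\mathrm{ss}}$ directly. First, $\dim\vtc = [\gol:\zol\uol] = (q^2-1)q^{2\ell-2}$, and by \autoref{lem: non-regular constituents} the non-regular part $(\vtc)^{\text{non-reg}}$ is (the inflation of) $\ind_{\Z_{\ell-1}\U_{\ell-1}}^{\G_{\ell-1}}(\bar\chi\otimes\psi_t)$, of dimension $[\G_{\ell-1}:\Z_{\ell-1}\U_{\ell-1}] = (q^2-1)q^{2\ell-4}$; hence $\dim(\vtc)^{\text{reg}} = (q^2-1)q^{2\ell-2} - (q^2-1)q^{2\ell-4} = (q^2-1)^2 q^{2\ell-4}$ (throughout we take $\ell\geq 2$, as \autoref{lem: non-regular constituents} requires). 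Since $\mathbf{ss}$ and $\mathbf{sns}$ are types of regular representations, no constituent of $(\vtc)^{\text{non-reg}}$ is of either type, and by \autoref{cor:cuspidal-multiplicity-zero=vtc} the regular part of $\vtc$ has no cuspidal constituent. Therefore every regular constituent of $\vtc$ is $\mathbf{ss}$ or $\mathbf{sns}$, and using the dimensions in \autoref{table:numbers-dimensions} we obtain the single numerical relation
\begin{equation}
\label{eq:ss-sns-dim-relation}
n_{\mathrm{ss}}\,(q+1)q^{\ell-1} + n_{\mathrm{sns}}\,(q^2-1)q^{\ell-2} = (q^2-1)^2 q^{2\ell-4}.
\end{equation}

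Next I compute $n_{\mathrm{ss}}$. By \autoref{prop: ss in indzu-G} every $\mathbf{ss}$ constituent of $\vtc$ occurs with multiplicity exactly $2$, so $n_{\mathrm{ss}} = 2\cdot\#\{\rho\in\mathrm{Irr}^{\mathrm{ss}}(\gol) : m_\rho\neq 0\}$. By \autoref{lem:split-semisimple from borel induction}, such a $\rho$ equals $\ind_\bol^\gol(\chi_1,\chi_2)$ for a pair $(\chi_1,\chi_2)\in C$, unique up to the swap $(\chi_1,\chi_2)\leftrightarrow(\chi_2,\chi_1)$; by \autoref{eqn: ss Z character equality} one needs $\chi_1\chi_2 = \chi$ as characters of $\co_\ell^\times\cong\zol$, and the argument in the proof of \autoref{prop: ss in indzu-G} (taking $g = \left(\begin{smallmatrix} 1 & 0\\ \varpi^{\ell-t}\nu^{-1} & 1\end{smallmatrix}\right)$) shows this condition is also sufficient. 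Since $(\chi_1,\chi_2)\in C$ forces $\chi_1\neq\chi_2$, the swap has no fixed points, so the factor $2$ from the multiplicity cancels the $\tfrac12$ from the swap and $n_{\mathrm{ss}} = \#\{(\chi_1,\chi_2)\in C : \chi_1\chi_2 = \chi\}$. Writing $\chi_2 = \chi\chi_1^{-1}$, this set is in bijection with $\{\chi_1\in\widehat{\co_\ell^\times} : \chi_1^2\chi^{-1}\text{ is injective}\}$. Now $H := 1+\varpi^{\ell-1}\co_\ell$ is isomorphic to $(\mathbb{F}_q,+)$ for $\ell\geq 2$, hence of odd exponent $p$, so squaring is a bijection of $\widehat H$; thus $\chi_1^2\chi^{-1}$ fails to be injective exactly when $\chi_1|_H$ equals the unique square root of $\chi|_H$ in $\widehat H$, and the number of $\chi_1\in\widehat{\co_\ell^\times}$ with that prescribed restriction is $|\co_\ell^\times|/|H| = (q-1)q^{\ell-2}$. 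Therefore $n_{\mathrm{ss}} = (q-1)q^{\ell-1} - (q-1)q^{\ell-2} = (q-1)^2 q^{\ell-2}$, which is part (1).

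Finally, substituting $n_{\mathrm{ss}} = (q-1)^2 q^{\ell-2}$ into \autoref{eq:ss-sns-dim-relation} and simplifying, $n_{\mathrm{sns}}(q^2-1)q^{\ell-2} = (q^2-1)^2 q^{2\ell-4} - (q-1)^2(q+1)q^{2\ell-3} = (q-1)^2(q+1)q^{2\ell-4}$, whence $n_{\mathrm{sns}} = (q-1)q^{\ell-2}$, which is part (2). I expect the only genuinely delicate step to be the direct count of $n_{\mathrm{ss}}$: pinning down precisely the set of $\mathbf{ss}$ representations occurring in $\vtc$ (necessity from \autoref{eqn: ss Z character equality}, sufficiency from the proof of \autoref{prop: ss in indzu-G}) and then the character count, which hinges on $p$ being odd so that squaring is bijective on $\widehat H$. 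Everything else is bookkeeping with subgroup indices and the numerical data of \autoref{table:numbers-dimensions}.
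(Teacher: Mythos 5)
Your proof is correct and follows essentially the same route as the paper: part (1) comes from the multiplicity-two result of \autoref{prop: ss in indzu-G} together with a count of the $\ss$ representations above the central character $\chi$, and part (2) from the identical dimension count $\dim\vtc = \dim((\vtc)^{\text{non-reg}}) + n_{\mathrm{ss}}\dim_{\mathrm{ss}} + n_{\mathrm{sns}}\dim_{\mathrm{sns}}$ using \autoref{lem: non-regular constituents}, \autoref{cor:cuspidal-multiplicity-zero=vtc} and \autoref{table:numbers-dimensions}. The only (cosmetic) difference is in the $\ss$ count: the paper divides $|\mathrm{Irr}^{\mathrm{ss}}(\gol)|$ by $|\widehat{\zol}|$, tacitly assuming equidistribution of $\ss$ representations over central characters, whereas you count the pairs $(\chi_1,\chi_2)\in C$ with $\chi_1\chi_2=\chi$ directly via the bijectivity of squaring on $\widehat{1+\varpi^{\ell-1}\co_\ell}$ (here $p$ odd), which makes that equidistribution explicit.
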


\begin{proof}
 By \autoref{lem: non-regular constituents}, the dimension of $(\vtc)^{non-reg}$ is $q^{2\ell-4}(q^2-1)$. Recall $\text{dim}_{ss}$ and $\text{dim}_{sns}$ denote the dimensions of $\ss$ and $\sns$ representations of $\gol$. By \autoref{prop: ss in indzu-G}, all $\ss$ representations with fixed central character $\chi_1\chi_2$ are constituents of $\vtc$ with multiplicity two. Therefore using \autoref{table:numbers-dimensions}, $n_{\text{ss}}(\vtc)=2 \frac{(q-1)^3 q^{2\ell-3}}{2}/|\zol|=(q-1)^2 q^{\ell-2}$. To compute $n_{\text{sns}}(\vtc)$, we first note that 
\[
\dim(\vtc) =\dim((\vtc)^{non-reg})+n_{ss}(\vtc)\dim_{ss}+n_{sns}(\vtc) \dim_{sns} .
\]
By direct computation, using $\dim(\vtc)=q^{2\ell-2}(q^2-1)$ and \autoref{table:numbers-dimensions}, we obtain $n_{\mathrm{sns}}(\vtc)=q^{\ell-2}(q-1)$.
\end{proof}

\subsection{ Split nonsemisimple Case} 

In this section, we consider the case of $\sns$ constituents of $\vtc$. Again as mentioned earlier, we assume $0\leq t< \ell$. First, we prove several lemmas that will be useful in proving the main result.
Let $A= \left(\begin{smallmatrix}     
 a  & b \\ 
\varpi^{\ell-t} & \lambda - a
\end{smallmatrix}\right)\in \mathfrak{g}(\co_{\ldown})$. For $A$ to be a $\sns$ matrix, we must have $\lambda-2a=0 \mod (\varpi)$. Assume $\lambda-2a=\varpi^i d$ for some $d \in \co_{\ldown}^\times$ and $1\leq i \leq \ldown.$ In this subsection we will work with this $A=\left(\begin{smallmatrix}     
a  & b \\ 
\varpi^{\ell-t} & a+\varpi^i d
\end{smallmatrix}\right)\in \mathfrak{g}(\co_{\ldown})$. Consider the set   $\Sigma = \left\{ \left(\begin{smallmatrix}  1 & 0 \\ z& w \end{smallmatrix}\right), \left( \begin{smallmatrix} 0 & 1 \\ w & \varpi z' \end{smallmatrix}\right)  \right\} \subseteq \gol.$ 
\begin{lemma}
\label{lem:Z Kl2 double cosets}
 The set $\Sigma$ is an exhaustive set of  double coset representatives for the following cases:
\begin{enumerate} 
\item For $ \zol \uol \backslash \gol / \K^{\lup}.$ Further $w, z, \varpi z'$ are determined modulo $(\varpi^{\lup}).$ 
\item For $ \zol \uol \backslash \gol / \s_A$ where  $\lup\geq t .$ Further $w$, $z$ and $\varpi z'$ are determined modulo $(\varpi^{\ldown}).$
\item For $ \zol \uol \backslash \gol  / \s_A$, where  $\lup <  t .$ Further $z$, $\varpi z'$ are determined modulo $(\varpi^{\ell-t})$ and $w$ is determined modulo $(\varpi^{\ldown}).$  
\end{enumerate} 
\end{lemma}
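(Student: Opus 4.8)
The plan is to treat the three cases uniformly by first analysing the double cosets $\zol\uol\backslash\gol/\K^{\lup}$ and then refining. The starting point is the Bruhat-type decomposition of $\gol$: every $g\in\gol$ can be written, after left multiplication by an element of $\uol$ (and absorbing a unit into $\zol$), in one of the two shapes $\left(\begin{smallmatrix} 1 & 0 \\ z & w\end{smallmatrix}\right)$ or $\left(\begin{smallmatrix} 0 & 1 \\ w & \varpi z'\end{smallmatrix}\right)$, according to whether the $(1,1)$-entry of $g$ is a unit or not; this is just row reduction from the left by upper-triangular matrices. I would first verify that these two families, with $w\in\cO_\ell^\times$ and $z,z'$ arbitrary, surject onto $\zol\uol\backslash\gol$, and then compute the right action of the relevant subgroup ($\K^{\lup}$, respectively $\s_A$).

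For case (1), one computes $\left(\begin{smallmatrix} 1 & 0 \\ z & w\end{smallmatrix}\right)(I+\varpi^{\lup}M)$ for $M\in\M_2(\cO_{\ldown})$ and checks which right $\K^{\lup}$-translates can be brought back into the same normal form by a left $\zol\uol$-translate; the outcome is that $z$ and $w$ only matter modulo $\varpi^{\lup}$ (and similarly $\varpi z'$ in the second family), since the entries of $\varpi^{\lup}M$ are what get added. This gives the claimed indeterminacy. For cases (2) and (3), I would replace $\K^{\lup}$ by $\s_A = \C_{\gol}(\tilde A)\K^{\ldown}$, using the explicit description of the centralizer of a $\sns$ element recalled in \autoref{sec: construction-SNS}: its non-scalar part is built out of the off-diagonal slot $\varpi^{\ell-t}$ of $A$ and of $\varpi^{\ldown}$. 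The key computation is to see, by multiplying a representative in $\Sigma$ on the right by a general element of $\s_A$ and on the left by a general element of $\zol\uol$, exactly which congruence the parameters $z,z',w$ are pinned down to. When $\lup\geq t$ the congruence subgroup $\K^{\ldown}$ dominates and everything is determined modulo $\varpi^{\ldown}$; when $\lup<t$ the off-diagonal entry $\varpi^{\ell-t}$ in $\C_{\gol}(\tilde A)$ is \emph{shallower} than $\varpi^{\ldown}$, so the columns $z$, $\varpi z'$ become determined only modulo $\varpi^{\ell-t}$ while the diagonal perturbation still forces $w$ modulo $\varpi^{\ldown}$. A counting check — comparing $|\Sigma|$ (computed with the stated moduli) against $|\zol\uol\backslash\gol/\s_A| = \sum_{g}|\s_A|/|\zol\uol\cap(\s_A)^g|\cdot|\gol|^{-1}$, or equivalently against $\sum_{g\in\Sigma}[\zol\uol:\zol\uol\cap(\s_A)^g]$ summing to $[\gol:\zol\uol]$ — closes the argument by showing the list is both exhausting and irredundant.

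The main obstacle I expect is the bookkeeping in case (3): keeping straight the three different $\varpi$-adic precisions ($\varpi^{\ell-t}$ for $z$ and $\varpi z'$, $\varpi^{\ldown}$ for $w$) while simultaneously left-reducing by $\zol\uol$ and right-reducing by $\s_A$, and making sure the two families in $\Sigma$ do not overlap after these reductions (i.e.\ that no $g$ with unit $(1,1)$-entry is equivalent to one with non-unit $(1,1)$-entry). The cleanest way to organize this is to fix the Bruhat stratum first, reduce within each stratum separately, and then do the cardinality comparison stratum by stratum, so that the verification reduces to a handful of explicit $2\times2$ matrix identities of the type displayed in the proof of \autoref{info about B_G_U}(2).
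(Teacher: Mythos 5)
Your strategy is essentially the paper's: the published proof consists of exactly the kind of explicit $2\times 2$ identities you describe (it displays only one, for part (3), showing that $\left(\begin{smallmatrix}1&0\\ \varpi^{\ell-t}z&w\end{smallmatrix}\right)$ and $\left(\begin{smallmatrix}1&0\\ 0&w'\end{smallmatrix}\right)$ with $w'=w-\varpi^{\ell-t}z^2w^{-1}b-\varpi^i dz$ lie in the same double coset, and dismisses the remaining verifications as ``direct computations''); your plan is the same computation, organized more carefully, with the welcome addition of a cardinality check for irredundancy. One correction to your first step, though: left multiplication by an element $\left(\begin{smallmatrix}a&ab\\ 0&a\end{smallmatrix}\right)\in\zol\uol$ rescales the bottom row of $g$ by $a$ and leaves it otherwise untouched, so the Bruhat stratum is governed by the $(2,2)$-entry of $g$, not the $(1,1)$-entry. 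Concretely, $g$ reduces to $\left(\begin{smallmatrix}1&0\\ z&w\end{smallmatrix}\right)$ exactly when its $(2,2)$-entry is a unit (solve $q+bs=0$ for $b$, which needs $s\in\cO_\ell^\times$, and then $a=s/\det(g)$), and reduces to $\left(\begin{smallmatrix}0&1\\ w&\varpi z'\end{smallmatrix}\right)$ exactly when the $(2,2)$-entry is a non-unit, in which case invertibility forces the $(2,1)$-entry to be a unit. The matrix $\left(\begin{smallmatrix}1&1\\ -1&\varpi\end{smallmatrix}\right)$ has unit $(1,1)$-entry yet lies in the second family, so your stated criterion would misplace it. Once the criterion is stated in terms of the $(2,2)$-entry, the non-overlap issue you flag at the end resolves itself: the two families of representatives in $\Sigma$ have unit, respectively non-unit, $(2,2)$-entries, and this dichotomy is preserved by both the left $\zol\uol$-action and the right action of $\K^{\lup}$ or $\s_A$ (both of which sit inside matrices congruent to upper-triangular ones modulo $\varpi$), so no representative of one family can meet a double coset of the other.
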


\begin{proof} We include a proof of (3) here. The rest of the parts similarly follow by direct computations. 
Assume $g=\left(\begin{smallmatrix}
        1 & 0\\
        \varpi^{\ell-t}z & w
    \end{smallmatrix}\right)$ and $g'=\left(\begin{smallmatrix}
        1 & 0\\
        0 & w'
    \end{smallmatrix}\right)$ where $w'=w-\varpi^{\ell-t}z^2w^{-1}b-\varpi^i d z$. For proving the claim, it is enough to show that $\zol\uol g \s_A=\zol\uol g' \s_A$. For $\lup<t$, take $y=-zw^{-1}$, $x=1=ay$ and $\alpha=bzw^{-1}(w-\varpi^{\ell-t}z^2w^{-1}b-\varpi^idz)^{-1}$. Hence
\[
\left(\begin{smallmatrix}
    1 & \alpha\\
    0 & 1
\end{smallmatrix}\right)\left(\begin{smallmatrix}
        1 & 0\\
        \varpi^{\ell-t}z & w
    \end{smallmatrix}\right)
   \left( \begin{smallmatrix}
        x+ya & by\\
        \varpi^{\ell-t}y & x+ay+\varpi^idy
    \end{smallmatrix}\right)=\left(\begin{smallmatrix}
        1 & 0\\
        0 & w-\varpi^{\ell-t}z^2w^{-1}b-\varpi^idz
\end{smallmatrix}\right).
\]
This completes the proof.
\end{proof}

 \begin{lemma}
  \label{conditions with Kl2}
  For a $\sns$ matrix $A = \left(\begin{smallmatrix}   
a  & b \\ 
\varpi^{\ell-t} &  a+\varpi^i d
\end{smallmatrix}\right)$ and $g\in \Sigma$, we have
 $\chi\otimes\psi_t=\psi_A^g$ on $\zol \uol \cap \K^{\lup}$ if and only if $g=\left(\begin{smallmatrix}  
    1 & 0\\
    \varpi z & w
\end{smallmatrix}\right)$ for some $z\in \co_\ell$ and $w\in \co_\ell^\times$ and
      \begin{equation*}
      \varpi^{\ell-t}(w-1)-\varpi^2 z^2 w^{-1}b-(\varpi^i d)\varpi z=0 \mod(\varpi^{\ldown}).        \end{equation*} 
\end{lemma}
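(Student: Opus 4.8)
The plan is to compute both sides of the identity $\chi\otimes\psi_t=\psi_A^g$ explicitly on the abelian group $\zol\uol\cap\K^{\lup}$, which by the formula recalled in the proof of Lemma~\ref{type of the matrix A} consists of the matrices $\left(\begin{smallmatrix} 1+\varpi^{\lup}x & \varpi^{\lup}y\\ 0 & 1+\varpi^{\lup}x\end{smallmatrix}\right)$ with $x,y\in\co_\ell$. The left-hand side evaluated on such an element is $\chi(1+\varpi^{\lup}x)\,\psi(\varpi^{\ell-t}\varpi^{\lup}y)=\psi(\varpi^{\lup}\lambda x)\,\psi(\varpi^{\ell-t+\lup}y)$, using the normalization $\chi(1+\varpi^{\lup}x)=\psi(\varpi^{\lup}\lambda x)$ fixed at the start of the section. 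So first I would settle the shape of $g$: writing $g=\left(\begin{smallmatrix}1&0\\ \varpi z&w\end{smallmatrix}\right)$ (the first family in $\Sigma$, with the bottom-left entry forced to lie in $\varpi\co_\ell$ since $A$ is $\sns$ and $i\ge 1$), I compute $g^{-1}(I+\varpi^{\lup}M)g$ for the relevant $M$ and read off $\psi_A^g$ via $\psi_A^g(h)=\psi(\varpi^{\lup}\tr(\hat A\,(g h g^{-1}-I)/\varpi^{\lup}))$; the $\left(\begin{smallmatrix}0&1\\ w&\varpi z'\end{smallmatrix}\right)$ family is ruled out separately because conjugating $\K^{\lup}\cap\zol\uol$ by it produces entries incompatible with a character agreeing with $\chi\otimes\psi_t$ — concretely the trace pairing forces a contradiction in the $\psi(\varpi^{\ell-t+\lup}y)$-component.

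Next, having reduced to $g=\left(\begin{smallmatrix}1&0\\ \varpi z&w\end{smallmatrix}\right)$, I match the two characters coordinate by coordinate in $x$ and $y$. Since $\zol\uol\cap\K^{\lup}$ is generated by the scalar part $I+\varpi^{\lup}xI_2$ and the unipotent part $I+\varpi^{\lup}y E_{12}$, the identity $\chi\otimes\psi_t=\psi_A^g$ on this group is equivalent to two scalar congruences modulo $\varpi^{\ldown}$ (the precision of $A$, which lives in $\g(\co_{\ldown})$): one coming from the $E_{12}$-direction and one from the scalar direction. The $E_{12}$-comparison should be automatically satisfied (both sides give the same $\varpi^{\ell-t+\lup}$-twisted character of $y$, as $g$ is lower-triangular mod $\varpi$), while the scalar comparison, after expanding $\tr(\hat A(ghg^{-1}-I)/\varpi^{\lup}))$ with $h=I+\varpi^{\lup}xI_2$ and simplifying, yields precisely $\varpi^{\ell-t}(w-1)-\varpi^2 z^2 w^{-1}b-\varpi^{i+1}d z\equiv 0\pmod{\varpi^{\ldown}}$ after dividing through by $x$ (which ranges over all of $\co_\ell$, so the coefficient must vanish). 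The normalization $\lambda=2a+\varpi^i d$ from the $\sns$ hypothesis is what makes the $a$-terms and the $\lambda$-terms on the two sides cancel, leaving only the stated expression.

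The main obstacle I anticipate is purely bookkeeping: carefully conjugating by $g$ and keeping track of which terms survive modulo $\varpi^{\ldown}$ versus modulo the ambient $\varpi^\ell$. In particular the cross term $\varpi^2 z^2 w^{-1}b$ arises from the off-diagonal entry $b$ of $A$ interacting with the lower-left entry $\varpi z$ of $g$ through the conjugation, and one must check it is not further truncated; similarly one must verify that $w\in\co_\ell^\times$ is genuinely forced (if $w\in\varpi\co_\ell$ then $g$ is not invertible, so this is automatic from $g\in\gol$). Once the conjugation is written out honestly, the equivalence is immediate, and I would present only the final simplified congruence rather than the intermediate matrix products.
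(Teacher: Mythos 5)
Your overall strategy is the same as the paper's: evaluate $\chi\otimes\psi_t$ and $\psi_A^g$ on $\zol\uol\cap\K^{\lup}$, rule out the antidiagonal family of coset representatives (using that $b$ must be a unit because $A$ is regular with non-unit off-diagonal entry $\varpi^{\ell-t}$), and read the congruence off the surviving comparison. The final congruence you write down is the correct one.

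However, you have swapped which generator of $\zol\uol\cap\K^{\lup}$ produces which condition, and as written your derivation cannot yield the lemma. You claim the stated congruence comes from "the scalar comparison, after expanding $\tr(\hat A(ghg^{-1}-I)/\varpi^{\lup})$ with $h=I+\varpi^{\lup}xI_2$". But $h$ is central, so $ghg^{-1}=h$ and $\tr(\hat A\cdot xI_2)=x(2a+\varpi^i d)$: this comparison contains no $b$, $z$ or $w$ whatsoever, and is automatically satisfied by the standing normalization $2a+\varpi^i d\equiv\lambda\pmod{\varpi^{\ldown}}$. Conversely, the $E_{12}$-direction is \emph{not} "automatically satisfied because $g$ is lower-triangular mod $\varpi$": conjugating $E_{12}$ by $g=\left(\begin{smallmatrix}1&0\\ \varpi z&w\end{smallmatrix}\right)$ gives $\left(\begin{smallmatrix}\varpi z & w\\ -\varpi^2 z^2/w & -\varpi z\end{smallmatrix}\right)$, and pairing with $A$ via the trace yields $\varpi^{\ell-t}w-\varpi^2z^2w^{-1}b-\varpi^{i+1}dz$, whose comparison with the coefficient $\varpi^{\ell-t}$ on the $(\chi\otimes\psi_t)$-side is exactly the stated congruence modulo $\varpi^{\ldown}$. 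This is also where the constraint $z\in\varpi\co_\ell$ actually comes from (if $z$ were a unit the term $z^2w^{-1}b$ would be a unit while the other terms are not, so the congruence would fail); your attribution of that constraint directly to "$A$ is $\sns$ and $i\ge1$" skips this step. The error is fixable by interchanging the roles of the two directions, but as planned the computation would terminate with only the vacuous trace identity and never produce the congruence.
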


\begin{proof}
By Mackey's restriction formula,
    \begin{equation*}
          \langle \mathrm{Ind}_{ \zol \uol }^\gol((\chi\otimes\psi_t) , \ind_   {\K^\lup}^\gol(\psi_A)   \rangle  = \oplus_{g \in  \zol \uol \backslash \gol / \K^{\lup} } \langle( \chi\otimes\psi_t), \psi_A^g   \rangle_{ \zol \uol \cap \K^{\lu}}. \label{eq:mac-res-thm}
    \end{equation*}
      We first claim that  $\langle(\chi\otimes \psi_t), \psi_A^g   \rangle_{ \zol \uol \cap \K^{\lu}} = 0$ for any $g \in \Sigma$ such that $g = \left(\begin{smallmatrix} 0 & 1 \\ w & z \end{smallmatrix}\right) 
    $ and $z$ is a non-unit.
    For $X = \left(\begin{smallmatrix}  1+\varpi^{\lup}y  & \varpi^{\lup}x \\ 0 & 1+\varpi^{\lup}y   \end{smallmatrix}\right)\in \zol \uol \cap K^{\lup}$ and $g = \left(\begin{smallmatrix}  0 & 1 \\ w & z \end{smallmatrix}\right)$, 
   \begin{equation*}
   \label{eqn: chi tensor psi of k-l2 intertwiner}
        (\chi\otimes\psi_t) (X ) = \psi ( \varpi^{\ell - t + \lup}  x+\varpi^{\lup}\lambda y)
   \end{equation*} 
   
   and  
   $$
   \psi_A^g(X) = \psi( \varpi^{\lup} x (bw + (\varpi^i d) z - \varpi^{\ell -t} w^{-1} z^2 +\varpi^{\lup}\lambda y) ).  
   $$
    Hence $\chi \otimes \psi_t = \psi_A^g$ on $\zol \uol \cap \K^{\lup}$ implies
    \begin{gather}
    \psi(\varpi^\lup x ( \varpi^{\ell -t} (-1- w^{-1} z^2 )  + bw + (\varpi^i d)z  ))   =   1 \label{eq:conj-eq-3}, 
    \end{gather}
    for all $x \in \cO_\ell$.  Since $A$ is regular we must have $b$ is a unit for $\ell> t$. Since $w$ is a unit and  $\psi$ is a non-degenerate character, we observe that \autoref{eq:conj-eq-3} does not hold for all $x \in \cO_\ell$. This completes the proof of the claim. We next focus on $g \in \Sigma$ such that $g =  \left(\begin{smallmatrix}   1 & 0 \\ z & w  \end{smallmatrix}\right).$ 
    For $X = \left(\begin{smallmatrix} 1+\varpi^{\lup}y & \varpi^{\lup}x \\ 0 & 1+\varpi^{\lup}y  \end{smallmatrix}\right) \in \zol \uol \cap \K^{\lup}$, we have 
    \begin{equation*}
    \label{eqn: psi-A-g of k-l2 intertwiner}
        \psi_A^g(X) = \psi(\varpi^\lup((-z^2w^{-1}b
 +\varpi^{\ell-t}w-(\varpi^i d)z)x+\lambda y)). 
    \end{equation*}
 
 Hence $\psi_A^g(X) = \chi\otimes \psi_t(X)$ if and only if 
 \begin{gather*}
 \varpi^{\ell-t}(w-1)- z^2 w^{-1}b-(\varpi^i d) z=0 \mod(\varpi^{\ldown}). \end{gather*}

  We notice that the above equation is satisfied only if $z$ is a non-unit. This proves the result.
\end{proof}

\begin{lemma}
\label{lem: number-of-non-conjugate-sns-matrices}
For $2 \leq \ell-t$ and fixed $\lambda\in \Ol$, let $\mathcal{S}=\{\left(\begin{smallmatrix}
   a & b\\
      \varpi^{\ell-t} & \lambda-a
\end{smallmatrix}\right)\in \mathfrak{g}(\co_{\ell}) \mid \lambda-2a=0 \mod(\varpi), \,\, b \in \co_{\ell}^\times\}.$ 
Let $\mathcal{M}$ be a set of all pairwise non-similar matrices in $\mathcal{S}.$  Then
   $|\mathcal{M}|\leq q^{\ell-2}$.
\end{lemma}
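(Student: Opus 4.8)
The plan is to count conjugacy classes of matrices in $\mathcal{S}$ by finding a convenient normal form under $\GL_2(\cO_\ell)$-conjugation, or at least enough invariants to get the upper bound $q^{\ell-2}$. Since every $A \in \mathcal{S}$ has fixed trace $\lambda$ and lower-left entry $\varpi^{\ell-t}$ with $b$ a unit, the similarity class is controlled by the characteristic polynomial together with a finer invariant coming from the module structure of $\cO_\ell^2$ over $\cO_\ell[A]$.

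First I would reduce to a cleaner representative. Given $A = \left(\begin{smallmatrix} a & b \\ \varpi^{\ell-t} & \lambda - a \end{smallmatrix}\right)$ with $b \in \cO_\ell^\times$, conjugating by $\mathrm{diag}(b^{-1}, 1)$ (or an upper/lower unipotent) I can normalize $b = 1$, turning $A$ into a companion-type matrix $\left(\begin{smallmatrix} a & 1 \\ \beta & \lambda - a \end{smallmatrix}\right)$ with $\beta = \varpi^{\ell - t}(\text{unit})$; in fact conjugating further I may assume $A$ is the companion matrix of its characteristic polynomial $X^2 - \lambda X + (a(\lambda - a) - \varpi^{\ell-t})$. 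Two companion matrices over $\cO_\ell$ are similar over $\GL_2(\cO_\ell)$ if and only if their characteristic polynomials coincide (this is the cyclic-vector / Smith-normal-form argument: $\cO_\ell^2$ is a cyclic $\cO_\ell[X]$-module, so it is determined by the single invariant factor, i.e. the characteristic polynomial). Hence the number of similarity classes in $\mathcal{S}$ is at most the number of distinct characteristic polynomials $X^2 - \lambda X + c$ arising, which is the number of distinct values of $c = a(\lambda - a) - \varpi^{\ell-t}$ as $a$ ranges over the residues with $\lambda - 2a \equiv 0 \bmod \varpi$.

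Next I would count those values of $c$. Writing $2a = \lambda - \varpi^i d$ as in the setup, the constant term is $c = a(\lambda-a) - \varpi^{\ell-t} = \tfrac14(\lambda^2 - \varpi^{2i}d^2) - \varpi^{\ell-t}$ (using that $2$ is a unit since $p$ is odd). Since $\lambda$ and the exponent $\ell - t \geq 2$ are fixed, $c$ is determined by $\varpi^{2i}d^2 \bmod \varpi^\ell$, i.e. by the square of $a - \lambda/2$, which is a non-unit. The non-units $\varpi \cO_\ell$ number $q^{\ell-1}$, but $a$ and $a' = \lambda - a$ give the same class, and more importantly $c$ depends only on $(a - \lambda/2)^2 \bmod \varpi^\ell$; squaring the non-units $\varpi\cO_\ell \cong \cO_{\ell-1}$ (via $\varpi x \mapsto \varpi^2 x^2$) lands in $\varpi^2\cO_\ell$ and the fibers of $x \mapsto x^2$ on $\cO_{\ell-1}^\times$ have size $2$, which after bookkeeping the powers of $\varpi$ yields at most $q^{\ell-2}$ distinct values of $c$. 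Combining with the normal-form step gives $|\mathcal{M}| \leq q^{\ell-2}$.

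The main obstacle I anticipate is the normal-form step over the non-reduced ring $\cO_\ell$: one must be careful that the companion-matrix reduction is valid, i.e. that $\cO_\ell^2$ is genuinely a cyclic module over $\cO_\ell[A]$ for every $A \in \mathcal{S}$ — this uses that the lower-left entry is $\varpi^{\ell-t}$ with $\ell - t < \ell$ so that $A$ is not scalar and the standard basis vector $e_2$ generates. The square-counting in the last step also needs the odd-characteristic hypothesis (to invert $2$ and to control the fibers of squaring) and a short case analysis on whether $2i \geq \ell$ or $2i < \ell$; neither is deep but both need care to land exactly on $q^{\ell-2}$ rather than a constant multiple. I would present the normal-form lemma first, then the counting as a direct computation.
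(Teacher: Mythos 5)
Your proof is correct and follows essentially the same route as the paper: reduce each $A$ to a canonical form determined by its characteristic polynomial (the paper invokes the fact that regular matrices over $\cO_\ell$ with equal trace and determinant are conjugate, which is exactly your cyclic-vector/companion-matrix argument), and then observe that with $\lambda$ fixed the determinant $\lambda^2/4 - (\varpi^{2i}d^2/4 + \varpi^{\ell-t}b)$ varies only within a coset of $\varpi^2\cO_\ell$, giving at most $q^{\ell-2}$ classes. The one slip is that you drop the unit $b$ from the constant term $c = a(\lambda-a)-\varpi^{\ell-t}b$, but this is harmless: since $\ell-t\geq 2$ the term $\varpi^{\ell-t}b$ also lies in $\varpi^2\cO_\ell$, so the bound $|\varpi^2\cO_\ell| = q^{\ell-2}$ absorbs it directly, with no need for the fiber analysis of the squaring map.
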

\begin{proof}
Let $A =\left( \begin{smallmatrix} a & b \\ \varpi^{\ell-t} & \lambda-a \end{smallmatrix} \right)\in \mathcal{S}.$ Then $D=\left(\begin{smallmatrix}
    \lambda/2 & 1\\
    \lambda^2/4- \mathrm{det}(A) &  \lambda/2
\end{smallmatrix}\right)\in \mathfrak{g}(\co_{\ell})$ is conjugate to $A$ because $A$ and $D$ are regular matrices in $\mathfrak{g}(\co_{\ell})$ with $\text{trace}(A)=\text{trace}(D)$ and $\text{det}(A)=\text{det}(D)$. Assume $\lambda-2a=\varpi^i d $ for some $1\leq i\leq \ldown$. Then $\lambda^2/4-\text{det}(A) = \varpi^2(\frac{\varpi^{2i-2}d^2}{4}+\varpi^{\ell-t-2}b).$ 
For $x=(\frac{\varpi^{2i-2}d^2}{4}+\varpi^{\ell-t-2}b)$, the matrix $D$ can be written as $ \left(\begin{smallmatrix}
    \lambda/2 & 1\\
    \varpi^2 x &  \lambda/2
\end{smallmatrix}\right)$,
where $x\in \Ol$. Since $\lambda$ is fixed, the number of non-conjugate matrices of the form $D$ is at most $ q^{\ell-2}$. This proves the lemma.
\end{proof}

\begin{thm}
\label{cor: SNS in induction Z}
  The following are true for any $\rho\in \mathrm{Irr}^{\sns}(\gol):$
  \begin{enumerate}
  \item For $\ell-1 \leq t$, we have $\langle V^t, \rho\rangle_{\gol} \leq 1$. 
 \item For $t< \ell-1$, the intertwiner $\langle V^t, \rho\rangle_{\gol}$ depends on the cardinality of the residue field.  \end{enumerate} 
    
\end{thm}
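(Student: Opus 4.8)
The plan is to analyze the intertwiner $\langle V^t_\chi, \rho\rangle_{\gol}$ for a $\sns$ representation $\rho$ via Mackey's formula, reducing the computation to a count of double cosets in $\zol\uol\backslash\gol/\s_A$ on which the relevant characters agree. By the decomposition $V^t = \oplus_{\chi}V^t_\chi$ it suffices to treat each $\vtc$, and by \autoref{cor:cuspidal-multiplicity-zero=vtc} and \autoref{type of the matrix A} any $\sns$ constituent of $\vtc$ arises from a matrix $A = \left(\begin{smallmatrix} a & b \\ \varpi^{\ell-t} & \lambda - a \end{smallmatrix}\right)$ with $\lambda - 2a \equiv 0 \pmod\varpi$. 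Writing $\rho \cong \ind_{\s_A}^{\gol}\tilde\sigma$ for a suitable irreducible $\tilde\sigma$ of $\s_A$ lying above $\psi_A$ (using the constructions of \autoref{sec:E.construction}, \autoref{O.construction}, and \autoref{sec: construction-SNS}), Frobenius reciprocity and Mackey give
\[
\langle \vtc, \rho\rangle_{\gol} = \sum_{g \in \zol\uol\backslash\gol/\s_A} \langle (\chi\otimes\psi_t)^g,\ \tilde\sigma\rangle_{(\zol\uol)^g \cap \s_A}.
\]

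For part (1), when $t \geq \ell-1$ I would combine \autoref{prop:multiplicity-free-SA-representation} (multiplicity-freeness of $\tilde\sigma|_{\U_A}$ for odd $\ell$, $t\geq\lup$) with the double-coset description of \autoref{lem:Z Kl2 double cosets}. The key point is that for $t = \ell$ the result is already known (\cite{MR4399251}), and for $t = \ell-1$ one uses \autoref{lem: non-regular constituents} to pass to $\G_{\ell-1}$ for the non-regular part, while for the regular $\sns$ part one shows via \autoref{conditions with Kl2} that essentially only one double coset $g$ contributes, and on that coset the contribution is at most $1$ because $\tilde\sigma$ restricted to the relevant unipotent-type subgroup $\U_A$ is multiplicity-free. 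The even-$\ell$ case is handled similarly using the inertia group $\s_x = \C_{\gol}(\tilde x)\K^{\ell/2}$ and the fact that $\psi_x$ extends to $\s_x$. I would need to track carefully that for $t \geq \ell-1$ the congruence in \autoref{conditions with Kl2}, namely $\varpi^{\ell-t}(w-1) - z^2w^{-1}b - \varpi^i d z \equiv 0 \pmod{\varpi^{\ldown}}$, pins down $g$ up to the double coset relation, leaving a single surviving term bounded by $1$.

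For part (2), the strategy is to exhibit, for $t < \ell-1$, a residue-field-dependent lower bound on some $\langle\vtc,\rho\rangle_{\gol}$. Here \autoref{lem: number-of-non-conjugate-sns-matrices} is crucial: it bounds the number of non-similar $\sns$ matrices $A$ occurring, by $q^{\ell-2}$, while \autoref{coro:number-ss-sns-vtc}(2) says $n_{\sns}(\vtc) = q^{\ell-2}(q-1)$ constituents are present (counted with multiplicity). A counting/pigeonhole argument then forces the average multiplicity of $\sns$ constituents to grow with $q$ — more precisely, since all $\sns$ constituents with a fixed central character have equal dimension and the total $\sns$-isotypic dimension is fixed, comparing $n_{\sns}(\vtc)$ against the number of available distinct $\sns$ representations (via \autoref{table:numbers-dimensions}, divided by $|\zol|$) shows the multiplicity must be $\Theta(q^{?})$, hence residue-field dependent. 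I would make this precise by computing, for the specific $A$ with $\lambda - 2a = \varpi d$ (the $i=1$ case), that the congruence in \autoref{conditions with Kl2} has a number of solutions $g$ (modulo the double-coset identifications of \autoref{lem:Z Kl2 double cosets}) that scales with a positive power of $q$, and that each such $g$ genuinely contributes to the intertwiner with $\tilde\sigma$.

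**Main obstacle.** The hard part will be part (2): turning the counting inequalities into an honest computation of $\langle\vtc,\rho\rangle_{\gol}$ for an explicit $\rho$, i.e. showing that the double cosets $g$ permitted by \autoref{conditions with Kl2} each contribute a \emph{nonzero} term $\langle(\chi\otimes\psi_t)^g,\tilde\sigma\rangle_{(\zol\uol)^g\cap\s_A}$ rather than merely bounding the total. This requires understanding how $\tilde\sigma|_{(\zol\uol)^g\cap\s_A}$ decomposes — which in turn uses the detailed Heisenberg-lift structure of $\tilde\sigma$ from \autoref{sec: construction-SNS} — and checking compatibility of the central and unipotent characters on each such intersection. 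The even-versus-odd $\ell$ case distinction, and the sub-case $\lup$ versus $t$ in \autoref{lem:Z Kl2 double cosets}, will make the bookkeeping delicate, so I expect the bulk of the work to be a careful case analysis organized around the valuation $i$ of $\lambda - 2a$ and the relative sizes of $t$, $\lup$, and $\ell - t$.
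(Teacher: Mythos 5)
Your proposal follows essentially the same route as the paper: part (1) via the Mackey double-coset analysis that reduces to a single contributing coset $g_w$ combined with the multiplicity-freeness of $\tilde\sigma|_{\U_A}$ from \autoref{prop:multiplicity-free-SA-representation}, and part (2) via the pigeonhole comparison of the bound $q^{\ell-2}$ on the number of \emph{distinct} $\sns$ constituents (obtained from \autoref{lem: number-of-non-conjugate-sns-matrices} together with the count of representations lying above each $\psi_A$ with fixed central character) against $n_{\sns}(\vtc)=(q-1)q^{\ell-2}$ from \autoref{coro:number-ss-sns-vtc}. The ``main obstacle'' you flag for part (2) is not actually needed: the paper never verifies that each permitted double coset contributes a nonzero term, since the counting argument alone already forces some $\sns$ constituent to have multiplicity at least $q-1$, which suffices for residue-field dependence.
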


\begin{proof} 
For $t = \ell$, this result follows from \cite{MR4356279}. We now focus on $ t \leq \ell-1.$ Let $\rho$ be a $\sns$ representation of $\gol$ such that $\rho$ is a constituent of $\vtc$. Then the following hold for $\rho$:
\begin{enumerate} 
\item By \autoref{type of the matrix A}, there exists $A= \left(\begin{smallmatrix}   
 a  & b \\ 
\varpi^{\ell-t} & a+ \varpi^i d  
\end{smallmatrix}\right)\in \mathfrak{g}(\co_{\ldown})$ 
such that $2a+\varpi^i d=\lambda \mod (\varpi^{\ldown})$ 
and $\rho$ lies above the character $\psi_A$ of 
$\K^\lup$.  
\item By \autoref{sec:E.construction},  there exists unique $\tilde{\sigma} \in \mathrm{Irr}(\s_A \mid \psi_A)$ of dimension $q^{\lup - \ldown}$ such that $\rho \cong  \ind_{\s_A}^{\gol}(\tilde{\sigma})$. 
\end{enumerate}
  By Mackey's restriction formula,
  \begin{equation}
\label{eq:intertwiner-numbers}
{\langle V_\chi^t, \rho \rangle}_{\gol}  = {\langle \ind_{\zol \uol }^\gol(\chi\otimes\psi_t), \ind_{\s_A}^\gol \tilde{\sigma} \rangle}_{\gol}  =  \oplus_{g \in \Sigma_A} \langle \chi\otimes\psi_{t}^g , \tilde{\sigma}\rangle_{({\zol \uol })^g \cap \s_A},
\end{equation} 
where $\Sigma_A =\zol \uol / \gol \backslash \s_A.$ 
 From \autoref{conditions with Kl2}, only coset representatives that may give a non zero intertwiner between $\chi \otimes \psi_t^g$ and $\tilde{\sigma}$ on $(\zol \uol)^g \cap \s_A$ are of the form $g=\left(\begin{smallmatrix}
    1 & 0\\
    \varpi^j z &  v
\end{smallmatrix}\right)\in \Sigma_A$ with  $j \leq  \ldown$. 
We now separately consider the cases $t = \ell-1$ and $t< \ell-1$. 

\vspace{.2 cm}
{\bf Case~1: $\mathbf{t = \ell-1}$:} By \autoref{lem:Z Kl2 double cosets}(3), any $g=\left(\begin{smallmatrix}
    1 & 0\\
    \varpi z &  v
\end{smallmatrix}\right) \in \Sigma_A$ for $t = \ell-1$ can be further reduced to the form $g_w := \left(\begin{smallmatrix}
    1 & 0\\
    0 &  w
\end{smallmatrix}\right)$. Let $\Sigma_A' = \{ g_w \mid w \in \cO_\ell^\times\}.$
We note that $(\zol \uol)^g\cap \s_A=\zol \uol\cap \s_A$ for all $g\in \Sigma_A'$ and $  \zol \uol \cap \s_A = \zol \uol (\varpi^{\ld-1} \cO_\ell).$ 
We now consider $\ell$ even and odd separately. 

\vspace{.2 cm}
{\bf $\ell$-even:} \,\, 
Here $\tilde{\sigma} = \widetilde{\psi_A}$ for some extension of $\widetilde{\psi_A}$ of $\psi_A$ to $\s_A$. By ${ \langle \rho, \vtc \rangle }_{\gol} \neq 0$ and  \autoref{eq:intertwiner-numbers}, there exists $g_w \in \Sigma_A'$ such that 
\begin{equation*} 
    \chi\otimes\psi_{\ell-1}^{g_{w}}(x) = \widetilde{\psi_A}(x), \,\, \mathrm{for \,\, all}\,\, x \in U(\varpi^{\ldown-1} \co_{\ell}) \label{eqn: l even t=l-1 , equality for g}. 
\end{equation*}
Now suppose there exists  $g_{w'}=\left(\begin{smallmatrix}
    1 & 0\\
    0 &  w'
\end{smallmatrix}\right) \in \Sigma_A^{'}$ such that
\begin{equation*}
    \chi\otimes\psi_{\ell-1}^{g_w} (x) = \chi \otimes \psi_{\ell-1}^{g_{w'}}(x),  \mathrm{\,\, for \,\, all} \,\, x \in U(\varpi^{{\ldown}-1} \cO_\ell).    \label{eqn: l even t=l-1 ,equality for g'}
\end{equation*}
Then we must have $\psi(\varpi^{\ld} xw) = \psi(\varpi^{\ld} xw')$ for all $x \in \cO_{\ell}$, which gives, $w=w' \mod(\varpi^{\ld})$. This implies $\zol \uol g_w \s_A = \zol \uol  g_{w'} \s_A,$ that is both $g_w$ and $g_{w'}$ represent the same double cosets. Since $g_{w'}$ varies over the coset representatives $\zol \uol \backslash \gol / \s_A$ in \autoref{eq:intertwiner-numbers} with $\langle \chi \otimes \psi_t^{g_{w'}}, \tilde{\sigma} \rangle_{(\zol \uol)^g_{w'} \cap \s_A} \neq 0$, we obtain that $\rho$ occurs with multiplicity one as a constituent of $V_\chi^t$.

\vspace{.2 cm}
{\bf $\ell$-odd:} \,\,
For $\ell$ odd, let $R_{\tilde{A}}$ denote the radical associated to $\psi_A $ (see \autoref{O.construction}).  We note that $\U(\varpi^{\ld} \cO_\ell) \subseteq R_{\tilde{A}}$. Therefore,  by   $\tilde{\sigma}|_ {\U(\varpi^{\ld} \cO_\ell)} = \widetilde{\psi_A}^{\oplus q}$ for some extension $\widetilde{\psi_A}$. Hence $ {\langle \vtc, \ind_{\s_A}^\gol \tilde{\sigma} \rangle }_{\gol}\neq 0$ implies there exists $g_w = \left(\begin{smallmatrix}  1 & 0 \\ 0 & w \end{smallmatrix}\right)$ such that 
\begin{equation} 
\chi\otimes\psi_{\ell-1}^{g_w}(x) = \widetilde{\psi_A}(x); \,\, \forall x \in \U(\varpi^{\ldown} \cO_\ell) \label{eqn: l odd t=l-1, equality for gw}. 
\end{equation} 

By the same arguments as given in $\ell$-even, $g_w$ is the unique coset representative in $\Sigma_A^{'}$ satisfying \autoref{eqn: l odd t=l-1, equality for gw}.   To complete the proof in this case, we need to further prove that  
\[
\langle \chi \otimes \psi_{\ell-1}^{g_w},\tilde{\sigma}  \rangle_{(\zol \uol)^{g_w} \cap \s_A} \leq 1.
\]
 By \autoref{prop:multiplicity-free-SA-representation}, the representation $\tilde{\sigma} |_{\U_A}$ is multiplicity free. For $t = \ell-1$, we have $\zol \U(\varpi^{\ldown-1}\cO_\ell) = (\zol \uol)^{g_w} \cap \s_A$. Therefore $\langle \chi \otimes \psi_{\ell-1}^g,\tilde{\sigma}  \rangle_{(\zol \uol)^{g_w} \cap \s_A} \leq 1$. This completes the proof for $t = \ell-1$.

\vspace{.3cm} 
{\bf Case~2: $\mathbf{ t < \ell-1}:$} For this case, we prove that for large $q$,  there exists a $\sns$ representation $\rho$ such that $\dim_{\mathbb C}(\mathrm{Hom}_{\gol}(\rho, \vtc))$ is $q$-dependent. To prove this, we show that the number of distinct $\sns$ constituents of $\vtc$ is at most $q^{\ell -2}$. Comparing this with $n_{\text{SNS}}(\vtc) = (q-1)q^{\ell-2}$ from \autoref{coro:number-ss-sns-vtc}, we observe that there must be some $\sns$ constituent of $\vtc$ with multiplicity strictly greater than one (recall $q$ is odd). Moreover, in view of given $n_{\text{SNS}}(\vtc)$ and considering large $q$, we must have multiplicity dependent on $q$ in general. We now proceed to prove the following claim:

\vspace{.2cm} 
\noindent {\bf Claim:} The number of distinct $\sns$ constituents of $\vtc$ is at most $q^{\ell -2}.$

\noindent {\bf Proof of the Claim:} We separately consider the cases of $\ell$ even and $\ell$ odd. First assume that $\ell$ is even, that is $\ldown = \lup$. By \autoref{type of the matrix A}, any $\sns$ constituents of  $\vtc$ lies above $\psi_A$ for $A$ of the form $\left(\begin{smallmatrix} a & b \\ \varpi^{\ell-t} & \lambda-a  \end{smallmatrix}\right) \in \mathfrak{g}(\cO_{\ldown})$, where $\lambda$ is determined by $\chi$, $\lambda - 2a \in (\varpi)$, and $b \in \Ol^\times$. By \autoref{lem: number-of-non-conjugate-sns-matrices}, the number of such non-conjugate matrices is atmost $q^{\ldown-2}.$ By the construction as mentioned in \autoref{sec:E.construction}, the number of $\sns$ representations lying above each such $\psi_A$, with a fixed central character, is atmost $q^{\ldown}.$ Hence the total number of distinct $\sns$ representations of $\vtc$ is at most $q^{\ell-2}.$ This proves the claim for $\ell$ even.

We now consider the case of $\ell$ odd. Let $\rho$ be a $\sns$ representation such that $\rho$ is a constituent of $\vtc$ and $\rho \cong \ind_{\s_A}^{\gol}\tilde{\sigma}$. By definition of $\vtc$, $\tilde{\sigma}$ and Frobenius reciprocity, we must have $\tilde{\sigma}_{\mid {\K^\lup}} \cong \psi_A^{\oplus q}$. Hence it is enough to determine the number of non-conjugate $A$ and number of distinct irreducible constituents lying above $\psi_A$ and $\chi$. By \autoref{lem: number-of-non-conjugate-sns-matrices}, the number of such non-conjugate matrices is atmost $q^{\ldown-2}$. Further, by construction as given in \autoref{O.construction}, the number of distinct irreducible representations lying above $\psi_A$ and $\chi$ is $q^{\lup}$. Hence the maximum number of distinct $\sns$ constituents of $\vtc$ is $q^{\ell -2}.$ This completes the proof of the claim. 
 \end{proof}
\begin{proof}[Proofs of \autoref{thm:multiplicity-bounds} and \autoref{thm:intermediate-steps-multiplicity-bounds}]
Proof of these results directly follow from \autoref{lem: non-regular constituents}, \autoref{cor:cuspidal-multiplicity-zero=vtc}, \autoref{prop: ss in indzu-G} and \autoref{cor: SNS in induction Z}. 
\end{proof}

\section{Proof of \autoref*{thm:borel-strong-Gelfand-pair}}
\label{sec:proof-borel-strong-Gelfand-pair}
Recall that $\bol$ denotes the group of all upper-triangular matrices in $\gol$ and $\uol$ is the subgroup of $\bol$ consisting  of all unipotent matrices. Every character of $\uol$, upto conjugation by $\bol$, is of the form 
\[ \psi_t: \uol  \rightarrow \mathbb C^\times; \,\,\psi_t(\begin{pmatrix} 1 & x \\ 0 & 1 \end{pmatrix} = \psi(\varpi^{\ell -t} x),
\]
for $0 \leq t \leq \ell.$ We observe that $\ztol \uol$, where $\ztol = 
\{\mat{x}{0}{0}{x+\varpi^t y} : x,y \in \co_{\ell}\}\cap \GL_2(\co_{\ell})$, is the stabilizer of $\psi_t$ in $\bol.$

Let $\chi$ be a one dimensional representation of $\zol$, the center of $\gol$, and let $\tilde{\chi}$ be an extension of $\chi$ to $\ztol$.  The following defines a one-dimensional representation of $\ztol \uol$.
	$$ (\tilde{\chi}, \psi_t) : \ztol \uol \rightarrow \mathbb{C}^\times;  \, \,  \,\,   xy \mapsto \tilde{\chi}(x)\psi_t(y),$$ 
 for $x \in \ztol$ and $y \in \uol.$ Every character of $\ztol \uol$ lying above $\psi_t$ is of the form $(\tilde{\chi}, \psi_t)$ for some $\tilde{\chi} \in \widehat{\ztol}$, an extension of ${\chi}\in \widehat{\zol}$. Hence ~\autoref{thm:borel-strong-Gelfand-pair} follows from Clifford's theory and the following result. 
 \begin{thm}
 \label{thm:multiplicity-one-wtc}
 The $\gol$-representation space  $\mathrm{Ind}_{\ztol \uol }^{\gol}(\tilde{\chi},\psi_t)$ is multiplicity free for every representation $(\tilde{\chi},\psi_t)$ of $\ztol \uol$.
\end{thm}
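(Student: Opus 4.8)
The plan is to prove \autoref{thm:multiplicity-one-wtc} by analyzing $\mathrm{End}_{\gol}(\mathrm{Ind}_{\ztol \uol}^{\gol}(\tilde\chi,\psi_t))$ via Mackey theory, reducing to a double-coset computation and then a case analysis over $t$. By Mackey's intertwining-number formula,
\[
\langle \mathrm{Ind}_{\ztol \uol}^{\gol}(\tilde\chi,\psi_t),\,\mathrm{Ind}_{\ztol \uol}^{\gol}(\tilde\chi,\psi_t)\rangle = \sum_{g\in \ztol\uol \backslash \gol / \ztol\uol} \langle (\tilde\chi,\psi_t),\, (\tilde\chi,\psi_t)^g\rangle_{\ztol\uol \cap (\ztol\uol)^g},
\]
so it suffices to bound the sum on the right by $1$. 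First I would note that since $\ztol$ normalizes $\uol$ and $\ztol\uol \supseteq \zol\uol$, the relevant double cosets refine those of $\zol\uol\backslash\gol/\zol\uol$ studied in \autoref{info about B_G_U} and \autoref{lem:Z Kl2 double cosets}, so I can take double coset representatives of the shape $\left(\begin{smallmatrix} 1 & 0\\ \varpi^j z & 1\end{smallmatrix}\right)$ (with $z$ a unit, $0\le j\le \ell$) together with the Weyl element $\left(\begin{smallmatrix} 0 & 1\\ 1 & 0\end{smallmatrix}\right)$, and possibly collapse some of them further using the larger group $\ztol$. The key point is that each summand is either $0$ or $1$ — it is the intertwining number between two characters of a group — so the whole argument is a matter of showing at most one $g$ (namely the identity coset) contributes.

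Second, for the Weyl element the intersection $\ztol\uol \cap (\ztol\uol)^w$ is contained in $\ztol$ together with the opposite unipotent; comparing $(\tilde\chi,\psi_t)$ with its $w$-conjugate on the unipotent part forces $\psi_t = 1$ on a nontrivial subgroup unless the contribution vanishes, which it does because $\psi_t$ is nontrivial (for $t\ge 1$; the $t=0$ case, where $\psi_0$ is trivial, needs separate but easy treatment, and indeed for $t=0$ the statement reduces to $\mathrm{Ind}_{\bol}^{\gol}$ of a character, handled by the classical $\ell=1$ Gelfand-pair argument lifted via the congruence filtration). For the non-Weyl representatives $g_j = \left(\begin{smallmatrix} 1 & 0\\ \varpi^j z & 1\end{smallmatrix}\right)$ with $j<\ell$, I would compute $\ztol\uol\cap (\ztol\uol)^{g_j}$ explicitly — it consists of matrices of the form $aI + \left(\begin{smallmatrix} * & x\\ 0 & *\end{smallmatrix}\right)$ subject to a valuation condition on $x$ depending on $j$, exactly as in \autoref{info about B_G_U}(3) but with $\ztol$ in place of $\zol$ — and then compare $(\tilde\chi,\psi_t)$ and $(\tilde\chi,\psi_t)^{g_j}$ on this group. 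Conjugation by $g_j$ shifts the upper-right entry by a term of valuation $\ge \ell-j$ coming from the lower-left entry $\varpi^{\ell-t}$-part of the character, and here the crucial difference from the $\ss$ analysis in \autoref{lem: ss multiplicity less than 2} is that $\tilde\chi$ is a character of the \emph{one-dimensional-larger} group $\ztol$, not merely of $\zol$: the extra diagonal direction in $\ztol$ records precisely the datum that an injective-type character would otherwise carry, and this is what kills all the extra cosets and yields multiplicity one rather than the bound $2$ (or worse) seen for the DGG modules.

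The main obstacle I anticipate is the bookkeeping for the representatives $g_j$ with $j$ close to $\ell-t$: one must check that the condition "$(\tilde\chi,\psi_t)$ and $(\tilde\chi,\psi_t)^{g_j}$ agree on $\ztol\uol\cap (\ztol\uol)^{g_j}$" is incompatible with $g_j$ representing a double coset distinct from the identity, and this requires carefully tracking how restricting $\tilde\chi$ to $1+\varpi^{?}\cO_\ell$-type subgroups interacts with $\psi_t$; the valuation $\rmf(\cdot)$ thresholds must be computed in both the $2t\le\ell$ and $2t>\ell$ regimes (as in \autoref{prop: ss in indzu-G}), and for odd $\ell$ one may additionally need the fact that the relevant character actually extends, which is guaranteed by the Heisenberg-type construction recalled in \autoref{O.construction}. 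Once the double-coset sum is shown to collapse to the single identity term contributing $1$, the theorem follows, and \autoref{thm:borel-strong-Gelfand-pair} is then immediate by Clifford theory as indicated in the text.
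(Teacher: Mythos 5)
There is a fundamental gap in your strategy: the Mackey self-intertwining number
\[
\dim_{\mathbb C}\mathrm{End}_{\gol}\bigl(\mathrm{Ind}_{\ztol \uol}^{\gol}(\tilde\chi,\psi_t)\bigr)
=\sum_{g\in \ztol\uol \backslash \gol / \ztol\uol} \langle (\tilde\chi,\psi_t),\, (\tilde\chi,\psi_t)^g\rangle_{\ztol\uol \cap (\ztol\uol)^g}
\]
equals $\sum_{\rho} m_\rho^2$, where $m_\rho$ runs over the multiplicities of the irreducible constituents. Multiplicity-freeness is the statement that this sum equals the \emph{number of distinct constituents}, not that it equals $1$; forcing the sum to be $1$ (i.e.\ showing only the identity double coset contributes) would prove that $\wtc$ is \emph{irreducible}, which is false. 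Indeed $\wtc$ contains many inequivalent constituents — for instance every $\ss$ representation $\ind_{\bol}^{\gol}(\chi_1,\chi_2)$ compatible with $(\tilde\chi,\psi_t)$ occurs in it — so a correct computation of the right-hand side necessarily produces many nonzero summands, and your plan of "showing at most one $g$ contributes" cannot be carried out. (Your treatment of the Weyl element is also off: $\ztol\uol\cap(\ztol\uol)^w=\ztol$, on which $\psi_t$ is invisible, so that coset can genuinely contribute and is not killed by the nontriviality of $\psi_t$.) To salvage the endomorphism-algebra route you would need either a Gelfand-trick anti-involution stabilizing each contributing double coset, or an independent count of the constituents to compare against $\sum m_\rho^2$; you supply neither.

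The paper avoids this issue entirely by bounding each multiplicity $\dim_{\mathbb C}\mathrm{Hom}_{\gol}(\rho,\wtc)$ individually, splitting over the type of $\rho$: cuspidals do not occur (\autoref{cor:cuspidal-multiplicity-zero=vtc}); for $\ss$ representations one computes $\langle \wtc, \ind_{\bol}^{\gol}(\chi_1,\chi_2)\rangle$ via Mackey against $\bol$ and shows the two cosets surviving from \autoref{cor:coset-form-BGU} cannot both contribute, because together they would force $\lambda-2\lambda'\equiv 0 \pmod{\varpi}$, contradicting semisimplicity; and for $\sns$ representations one computes against the inertia group $\s_A$, collapses the relevant double cosets $\bigl(\begin{smallmatrix}1&0\\0&w\end{smallmatrix}\bigr)$ modulo $(\varpi^{f(t,i)})$, and shows $\tilde\sigma|_{\ztol\uol\cap\s_A}$ is multiplicity-free using \autoref{prop:multiplicity-free-SA-representation} (for $t\ge\lup$) or the explicit model of \autoref{sec: construction-SNS} (for $t<\lup$). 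Your intuition that the extra diagonal direction in $\ztol$ carries the datum that eliminates cosets is correct and is precisely what drives the paper's $\ss$ argument, but it must be deployed coset-by-coset against each irreducible $\rho$, not against the self-intertwining sum.
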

We will now focus on proving the above result. For $t = 0$ and $t = \ell$, this follows  from \cite{MR0338274} and \cite{MR4399251}, respectively. The restriction of $(\tilde{\chi}, \psi_t)$ to $\zol \uol$ is denoted by $(\chi, \psi_t)$. We denote $\mathrm{Ind}_{\ztol \uol }^{\gol}(\tilde{\chi},\psi_t)$ by $\wtc$ and as earlier $\mathrm{Ind}_{\zol \uol }^{\gol}(\chi,\psi_t)$ by $\vtc$. Note that $\wtc$ is a subrepresentation of $\vtc$. Therefore, the result for $t = \ell-1 $ follows from \autoref{thm:multiplicity-bounds}. Hence, we will assume $1 \leq t \leq \ell-2$ for the rest of this section. 

As $\tilde{\chi}$ is an extension of $\chi$ to $\ztol$,  there exists ${\chi'} \in \widehat{1 + \varpi^t \cO_\ell}$ such that
\[
\tilde{\chi}: \ztol \rightarrow \mathbb C^\times; \,\,\, \mat{x}{0}{0}{x} \mat{1}{0}{0}{1+\varpi^{t} y} \mapsto \chi(x) {\chi'}(1 + \varpi^t y).
\]
In this case, we denote $\tilde{\chi}$ by  $(\chi, {\chi'})$ and $(\tilde{\chi}, \psi_t)$ by $(\chi, {\chi'}, \psi_t).$ 
 Let $s =\max\{t,\lup\}$ and $ s' = \min\{t,\lup\}$. 
There exists $\lambda, \lambda' \in\co_\ell$ such that $\chi(1+\varpi^{\lup} x)=\psi(\varpi^{\lup}\lambda x)$ and ${\chi'}(1+\varpi^{s}x)=\psi(\varpi^{s}\lambda' x)$ for $x \in \co_\ell$.   
The following result has a proof parallel to \autoref{lem: non-regular constituents} and \autoref{type of the matrix A}, hence we omit it here. 
  \begin{lemma}
\label{lem: non-regular constituents-W-type of the matrix A-wtc}
\begin{enumerate}
  \item For $t \in [1, \ell-2]$, we have $(\wtc)^{\text{non-reg}} \cong \ind_{\Z^t_{\ell-1}\U_{\ell-1}}^{\G_{\ell-1}}(\tilde{\chi}_1, \psi_t) $ for some character $\tilde{\chi}_1$ of $\Z^t_{\ell-1}$.
  \item Let $X \in \M_2(\cO_{\ell_1})$ be such that $\psi_X$ is a constituent of $\mathrm{Res}^{\gol}_{\K^\lup}(\wtc)$, then $X$ is conjugate to 
 
   $ \left(\begin{smallmatrix}   
 \lambda-\lambda'-\varpi^{\ell-s}\beta & b \\ 
\varpi^{\ell-t} & \lambda'+\varpi^{\ell-s}\beta
\end{smallmatrix} \right)
$
for some $\beta, b \in \cO_{\ldown}$. 
  \end{enumerate} 
\end{lemma}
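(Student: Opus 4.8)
The plan is to establish part (2) first and then deduce part (1), running the arguments of \autoref{type of the matrix A} and \autoref{lem: non-regular constituents} respectively, with the center $\zol$ replaced throughout by the larger stabilizer $\ztol$. Note that since $1\le t\le\ell-2$ we have $\ell-1\ge s=\max\{t,\lup\}$, so all the characters in play are controlled by $\lambda$ and $\lambda'$ on the congruence level $\ell-1$.

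For part (2), I would restrict $\wtc$ to $\K^{\lup}$ and apply Mackey's restriction formula, giving $\mathrm{Res}^{\gol}_{\K^{\lup}}\wtc\cong\bigoplus_{g}\mathrm{Ind}_{\K^{\lup}\cap(\ztol\uol)^{g}}^{\K^{\lup}}(\tilde{\chi},\psi_t)^{g}$. Since the three types of $\psi_X$ are conjugation invariants, it is enough to treat $g=e$, so the constituent $\psi_X$ of $\widehat{\K^{\lup}}\cong\M_2(\cO_{\ldown})$ must agree with $(\tilde{\chi},\psi_t)$ on $\K^{\lup}\cap\ztol\uol$. A direct computation shows that this intersection consists of the matrices $\left(\begin{smallmatrix} 1+\varpi^{\lup}a & \varpi^{\lup}b\\ 0 & 1+\varpi^{\lup}c\end{smallmatrix}\right)$ with $a\equiv c\pmod{\varpi^{\max\{t-\lup,0\}}}$; evaluating $(\tilde{\chi},\psi_t)$ on such an element by means of $\chi(1+\varpi^{\lup}x)=\psi(\varpi^{\lup}\lambda x)$, $\chi'(1+\varpi^{s}x)=\psi(\varpi^{s}\lambda' x)$ and $\psi_t\left(\begin{smallmatrix} 1 & \varpi^{\lup}b\\ 0 & 1\end{smallmatrix}\right)=\psi(\varpi^{\ell-t+\lup}b)$, and matching against $\psi(\varpi^{\lup}\mathrm{tr}(\hat{X}y))$, pins down — modulo the annihilator of $\K^{\lup}\cap\ztol\uol$ inside $\M_2(\cO_{\ldown})$ — the conjugation invariants $\mathrm{tr}(X)=\lambda$, $X_{21}=\varpi^{\ell-t}$ and $X_{22}\equiv\lambda'\pmod{\varpi^{\ell-s}}$ (the last two automatic when $t\le\lup$, since then $\varpi^{\ell-t}=\varpi^{\ell-s}=0$ in $\cO_{\ldown}$). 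Hence $X$ is conjugate to $\left(\begin{smallmatrix} \lambda-\lambda'-\varpi^{\ell-s}\beta & b\\ \varpi^{\ell-t} & \lambda'+\varpi^{\ell-s}\beta\end{smallmatrix}\right)$ for suitable $\beta,b\in\cO_{\ldown}$. The only case split is whether the unipotent coordinate contributes, i.e.\ whether $t>\lup$ or $t\le\lup$; the computation is otherwise uniform.

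For part (1), I would use that an irreducible $\rho$ of $\gol$ is non-regular precisely when $\rho\big|_{\K^{\ell-1}}$ is a multiple of a single scalar character $\psi_{\mu I}$, $\mu\in\mathbb{F}_q$, which is $\gol$-invariant; so by Frobenius reciprocity a non-regular $\rho$ occurs in $\wtc$ if and only if $(\tilde{\chi},\psi_t)$ and $\psi_{\mu I}$ agree on $\ztol\uol\cap\K^{\ell-1}$. Reducing the computation of part (2) modulo $\varpi$ identifies the condition on $(\lambda,\lambda')$ under which this is possible and then fixes $\mu$; in that case one obtains $(\wtc)^{\text{non-reg}}\cong\mathrm{Ind}_{\ztol\uol\K^{\ell-1}}^{\gol}\big((\tilde{\chi},\psi_t)\otimes\psi_{\mu I}\big)$, a subrepresentation of $\mathrm{Ind}_{\K^{\ell-1}}^{\gol}(\psi_{\mu I})$. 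Exactly as in \autoref{lem: non-regular constituents}, I would extend $\psi_{\mu I}$ to the character $\delta(g)=\psi(\hat{\mu}\,\det(g))$ of $\gol$, invoke \cite{MR2270898}*{Corollary~6.17} to get $\mathrm{Ind}_{\K^{\ell-1}}^{\gol}(\psi_{\mu I})\cong\delta\otimes\mathbb{C}[\G_{\ell-1}]$, and conclude $(\wtc)^{\text{non-reg}}\cong\mathrm{Ind}_{\Z^t_{\ell-1}\U_{\ell-1}}^{\G_{\ell-1}}(\tilde{\chi}_1,\psi_t)$, where $\tilde{\chi}_1$ is the character of $\Z^t_{\ell-1}$ obtained by twisting $\tilde{\chi}$ by $\delta^{-1}$ and passing to $\G_{\ell-1}$. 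The step I expect to be the main obstacle is exactly the point where the proof genuinely departs from the $\vtc$ case: because $\ztol$ strictly contains the center $\zol$, the restriction of $(\tilde{\chi},\psi_t)$ to $\ztol\uol\cap\K^{\ell-1}$ is governed by both $\lambda$ and $\lambda'$ and need not be a scalar character of $\K^{\ell-1}$, so one must first isolate the precise condition on $(\lambda,\lambda')$ guaranteeing that $\wtc$ has a non-regular constituent at all, and then check that the twist by $\delta^{-1}$ descends to a genuine character $\tilde{\chi}_1$ of $\Z^t_{\ell-1}$ (and not merely of $\Z_{\ell-1}$), i.e.\ that it stays compatible with the extra datum $\chi'$. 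Everything else is bookkeeping with the parameters $\lambda,\lambda',t,s$ across the two ranges $t\le\lup$ and $t>\lup$.
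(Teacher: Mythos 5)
The paper itself omits this proof, saying only that it is parallel to \autoref{lem: non-regular constituents} and \autoref{type of the matrix A}; your proposal is exactly that parallel argument. Part (2) is correct as written. One caution on wording: for $t\le\lup$ the conditions $X_{21}=\varpi^{\ell-t}$ and $X_{22}\equiv\lambda'\pmod{\varpi^{\ell-s}}$ are not ``automatic'' --- in $\cO_{\ldown}$ they read $X_{21}=0$ and $X_{22}=\lambda'$, which are genuine constraints --- but they are precisely the constraints encoded by the displayed matrix, so your conclusion stands.

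The obstacle you flag in part (1) is real, and closing it shows that the statement needs a caveat. On $\ztol\uol\cap\K^{\ell-1}=\{I+\varpi^{\ell-1}\left(\begin{smallmatrix}a&b\\0&c\end{smallmatrix}\right)\}$ (the congruence between $a$ and $c$ is vacuous because $t\le\ell-2$), one computes $(\tilde{\chi},\psi_t)=\psi\bigl(\varpi^{\ell-1}((\lambda-\lambda')a+\lambda'c)\bigr)$, the $\psi_t$-contribution $\varpi^{2\ell-t-1}b$ being trivial. This extends to a scalar character $\psi_{\mu I}$ of $\K^{\ell-1}$ if and only if $\lambda\equiv 2\lambda'\pmod{\varpi}$, in which case $\mu=\lambda'$. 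If $\lambda\not\equiv 2\lambda'\pmod{\varpi}$, every character of $\K^{\ell-1}$ lying above $(\tilde{\chi},\psi_t)|_{\ztol\uol\cap\K^{\ell-1}}$ is $\psi_Y$ with $Y\equiv\left(\begin{smallmatrix}\lambda-\lambda'&*\\0&\lambda'\end{smallmatrix}\right)$ regular modulo $\varpi$, so $(\wtc)^{\text{non-reg}}=0$, whereas the right-hand side of part (1) is never zero; thus part (1) holds as stated only when $\lambda\equiv 2\lambda'\pmod{\varpi}$ and should read $(\wtc)^{\text{non-reg}}=0$ otherwise. A dimension count confirms this: summing the claimed right-hand side over all $q^{\ell-t}$ extensions $\tilde{\chi}$ of $\chi$ would exceed $\dim (\vtc)^{\text{non-reg}}$ from \autoref{lem: non-regular constituents} by a factor of $q$. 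None of this affects the use of the lemma in \autoref{sec:proof-borel-strong-Gelfand-pair}, since a zero non-regular part is trivially multiplicity-free. In the nondegenerate case your remaining worry disappears: the twist of $(\tilde{\chi},\psi_t)\otimes\psi_{\mu I}$ by $\delta^{-1}$ is trivial on $\K^{\ell-1}$, hence on $\ztol\cap\K^{\ell-1}$, and $\ztol$ surjects onto $\Z^t_{\ell-1}$ because $t\le\ell-2$; since $\delta^{-1}$ is trivial on $\uol$, the twisted character descends to a character of the form $(\tilde{\chi}_1,\psi_t)$ on $\Z^t_{\ell-1}\U_{\ell-1}$ as required.
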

In view of the above lemma, to prove \autoref{thm:multiplicity-one-wtc}, it is enough to prove that every regular representation of $\gol$ has multiplicity at most one as a subrepresentation of $\wtc.$ For $\ell = 1$, it is well known. Hence, we will now onwards assume that $\ell \in [2, \ell-2]$. For the cuspidal representations of $\gol,$  it follows from \autoref{cor:cuspidal-multiplicity-zero=vtc}. We now proceed to focus on $\ss$ and $\sns$ representations of $\gol.$

    \begin{lemma}
  For any $\ss$ representation $\rho$, we have $\dim_{\mathbb C}(\mathrm{Hom}_\gol(\rho, \wtc)) \leq 1.$ 
\end{lemma}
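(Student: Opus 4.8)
The goal is to bound, by one, the multiplicity of a split semisimple irreducible $\rho$ inside $\wtc = \ind_{\ztol\uol}^{\gol}(\chi,\chi',\psi_t)$ for $1 \le t \le \ell-2$. Since $\wtc$ is a subrepresentation of $\vtc$, and by \autoref{prop: ss in indzu-G} we already know $\dim\mathrm{Hom}_\gol(\rho,\vtc) \le 2$, the entire task reduces to ruling out the possibility that the two copies of $\rho$ occurring in $\vtc$ both survive in the subspace $\wtc$. The natural strategy is to run the Mackey analysis directly for $\wtc$: writing $\rho \cong \ind_\bol^\gol(\chi_1,\chi_2)$ with $\chi_1\chi_2^{-1}$ injective (via \autoref{lem:split-semisimple from borel induction}), Frobenius reciprocity and the Mackey formula give
\[
\langle \wtc, \ind_\bol^\gol(\chi_1,\chi_2)\rangle_\gol = \bigoplus_{g \in \bol\backslash\gol/\ztol\uol} \langle (\chi,\chi',\psi_t)^g, (\chi_1,\chi_2)\rangle_{(\ztol\uol)^g \cap \bol}.
\]
So I first want a set of double coset representatives for $\bol\backslash\gol/\ztol\uol$. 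Since $\ztol\uol \subseteq \zol\uol$ with small index, the representatives should refine those in \autoref{info about B_G_U}: essentially the same set $\Delta = \{\left(\begin{smallmatrix} 1 & 0 \\ \varpi^j z & 1\end{smallmatrix}\right), \left(\begin{smallmatrix} 0 & 1 \\ 1 & 0\end{smallmatrix}\right)\}$, possibly with $j$ ranging slightly differently or with extra representatives coming from the quotient $\zol\uol / \ztol\uol$. I would compute this explicitly by tracking where the extra freedom in $\ztol$ (the $1+\varpi^t y$ factor on the diagonal) goes.

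**Key steps, in order.** (1) Determine $\bol\backslash\gol/\ztol\uol$ and, for each representative $g$, compute $(\ztol\uol)^g \cap \bol$ explicitly — this is the analogue of \autoref{info about B_G_U}(3), and the presence of $\chi'$ on $1+\varpi^t\cO_\ell$ will tighten which $g$ can contribute. (2) For $g = \left(\begin{smallmatrix}0&1\\1&0\end{smallmatrix}\right)$, the intersection is $\ztol$ and the intertwiner forces $(\chi_1,\chi_2)|_{\ztol} = \tilde\chi$; this pins down $\chi_1\chi_2$ and also $\chi_1,\chi_2$ on $1+\varpi^t\cO_\ell$ up to the Weyl flip. (3) For the "lower unipotent" representatives $g_j = \left(\begin{smallmatrix}1&0\\\varpi^j z&1\end{smallmatrix}\right)$, evaluate $(\chi,\chi',\psi_t)^{g_j}$ against $(\chi_1,\chi_2)$ on the intersection and extract the defining congruence on $j$ and $z$: using that $\chi_1\chi_2^{-1}$ is injective (as in the proof of \autoref{lem: ss multiplicity less than 2}), show the non-vanishing forces $\varpi^j z \equiv \varpi^{\ell-t}\nu^{-1}$ modulo a high power, so there is at most one such double coset. (4) Conclude that at most one of the two potential contributions to $\langle\vtc,\rho\rangle$ actually lies in $\wtc$: the point is that the \emph{extra} constraint coming from $\chi'$ on the diagonal torus part $1+\varpi^t\cO_\ell$ (which is vacuous for $\vtc$) is precisely what kills one of the two double cosets, or forces the identity coset and the $\nu^{-1}$-coset to coincide.

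**Main obstacle.** The delicate point is Step (4): one must show that the $\chi'$-constraint genuinely eliminates one copy rather than cutting each copy in half or doing nothing. Concretely, in \autoref{cor:coset-form-BGU} the two surviving cosets for $\vtc$ are $g = I$ and $g = \left(\begin{smallmatrix}1&0\\\varpi^{\ell-t}\nu^{-1}&1\end{smallmatrix}\right)$; I expect that on $(\ztol\uol)^g \cap \bol$ the character $(\chi,\chi',\psi_t)^g$ restricted to the new piece $\{\mathrm{diag}(x(1+\varpi^t y),\,x)\}$-type elements differs between these two cosets — for the identity coset it sees $\chi'$ directly, for the other coset it sees $\chi'$ twisted by a conjugation that shifts it by something involving $\nu$ and $t$ versus $\lup$ — so the requirement that \emph{both} equal $(\chi_1,\chi_2)$ on this piece is contradictory unless $t \geq \ell-1$ (the excluded range). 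Handling the two regimes $t \le \lup$ and $t > \lup$ (equivalently $s = \max\{t,\lup\}$ being $\lup$ or $t$) separately, as the paper does elsewhere via $s, s'$, will likely be necessary, and getting the congruence bookkeeping exactly right — which power of $\varpi$ the various equalities hold modulo — is where the real work lies. A clean way to organize it: show directly that if $g_1 = I$ and $g_2$ both give nonzero intertwiners, then the $\chi'$-equation reads $\chi'(1+\varpi^t y)^{\pm 1}$ agreeing with an expression that, by injectivity-type reasoning applied to $\chi_1\chi_2^{-1}$ restricted appropriately, collapses $g_1$ and $g_2$ into one double coset — exactly mirroring the final move in \autoref{lem: ss multiplicity less than 2}.
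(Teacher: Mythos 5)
Your overall strategy is the same as the paper's: run Mackey for $\wtc$ against $\ind_{\bol}^{\gol}(\chi_1,\chi_2)$, inherit the at-most-two contributing double cosets from the $\vtc$ analysis (\autoref{cor:coset-form-BGU}), and show the extra constraint coming from $\chi'$ on $1+\varpi^t\cO_\ell$ rules out both cosets contributing simultaneously. Steps (1)--(3) are exactly what the paper does (with the minor caveat that the two relevant representatives are the Weyl element $\left(\begin{smallmatrix}0&1\\1&0\end{smallmatrix}\right)$, whose intersection with $\bol$ is $\ztol$, and $\left(\begin{smallmatrix}1&0\\\varpi^{\ell-t}\nu^{-1}&1\end{smallmatrix}\right)$ --- not the identity coset, which dies already on $\uol$ for $t\ge 1$).

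The gap is in Step (4), which you flag as the main obstacle but resolve with two conjectured mechanisms, neither of which is what actually happens. The two cosets do \emph{not} collapse into one, and no dichotomy on $t$ versus $\ell-1$ is needed. What actually closes the argument is an input your plan never invokes: the $\wtc$-analogue of \autoref{type of the matrix A} (namely \autoref{lem: non-regular constituents-W-type of the matrix A-wtc}(2)) shows that a $\ss$ constituent of $\wtc$ forces $\lambda-2\lambda'\notin(\varpi)$, where $\lambda,\lambda'$ are the parameters of $\chi,\chi'$. Now if both cosets contributed, the Weyl coset (intersection $\ztol$) would force one of $\chi_1,\chi_2$ to equal $\chi'$ on $1+\varpi^t\cO_\ell$ and $\chi_1\chi_2=\chi$ on $\zol$, while the explicit computation of $(\ztol\uol)^{g_2}\cap\bol$ (which contains elements with $1+\varpi^t d$ in the lower diagonal entry) forces the \emph{other} one to equal $\chi'$ there as well. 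Hence $\chi=\chi_1\chi_2=\chi'^2$ on $1+\varpi^t\cO_\ell$, i.e.\ $\lambda\equiv 2\lambda'\pmod{\varpi}$ --- contradiction. (Equivalently, $\chi_1=\chi_2$ on $1+\varpi^t\cO_\ell\supseteq 1+\varpi^{\ell-1}\cO_\ell$ contradicts the injectivity of $\chi_1\chi_2^{-1}$, since $t\le\ell-2$; your phrase ``injectivity-type reasoning'' is adjacent to this, but the conclusion is a contradiction with $\rho$ being $\ss$, not a coset identity.) Without identifying this precondition on $\lambda,\lambda'$ --- or the equivalent injectivity violation --- the contradiction cannot be closed, so as written the proposal does not yet prove the bound.
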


\begin{proof}  
From \autoref{lem: non-regular constituents-W-type of the matrix A-wtc}, we note that $\dim_{\mathbb C}(\mathrm{Hom}_{\gol}(\rho, \wtc)) \neq 0$ for a $\ss$ representation $\rho$ of $\gol$ implies $\lambda - 2\lambda' \notin (\varpi).$
Since $\wtc\subseteq \vtc$, by \autoref{lem: ss multiplicity less than 2}, we have $\dim_{\mathbb C}(\mathrm{Hom}_{\gol}(\rho, \wtc)) \leq 2.$
By Mackey's restriction formula,
\[
\langle\ind_{\ztol \uol}^\gol(\tilde{\chi},\psi_t),\ind_\bol^\gol(\chi_1,\chi_2)\rangle_{\gol} =\oplus_{g\in \bol\backslash \gol/\ztol \uol}{\langle {(\tilde{\chi}, \psi_t)}^g,(\chi_1,\chi_2)\rangle}_{(\ztol \uol)^g\cap \bol}.
\]
From \autoref{cor:coset-form-BGU}, only cosets in $\ztol \uol \backslash \gol /\bol$ that may give a non zero intertwiner are $g_1=\left(\begin{smallmatrix}
    0 & 1\\
    1 & 0
\end{smallmatrix}\right)$ and $g_2=\left(\begin{smallmatrix}
    1 & 0\\
    \varpi^{\ell-t}\delta^{-1} & 1
\end{smallmatrix}\right),$ where $\delta$ is as given in \autoref{cor:coset-form-BGU}.
We claim that both $g_1$ and $g_2$ can not simultaneously give non zero intertwiner in this case. This will prove our result. We prove our claim by contradiction. Suppose
\begin{gather}
(\chi,{\chi'},\psi_t)^{g_i}=(\chi_1,\chi_2) \text{ on }(\ztol \uol)^{g_i}\cap \bol,  \label{eq:2e}
\end{gather}
for $1 \leq i \leq 2$. We note that $(\ztol \uol)^{g_1}\cap \bol=
       \ztol$ and  
    \[
   (\ztol \uol)^{g_2} \cap \bol =  \{ aI_2 + \varpi^{\max\{ 2t-\ell, 0\}} 
       \left(\begin{smallmatrix}
           -\varpi^{\ell-t} x\delta^{-1} & x\\
           0 & \varpi^t d + \varpi^{\ell-t}  x\delta^{-1}
       \end{smallmatrix}\right) \mid  x,d \in \co_\ell, a \in \co_\ell^\times   \}.   
       \]
By considering suitable elements of $(\ztol \uol)^{g_i} \cap \bol$ and using \autoref{eq:2e}, we obtain 
\begin{gather*}
    {\chi'}(1+\varpi^td)=\chi_1(1+\varpi^td) = \chi_2 (1 + \varpi^t d) \,\,\mathrm{and}  \,\,
    \chi(c)=\chi_1\chi_2(c),
\end{gather*}
for all $d \in \co_\ell$ and $c\in \co_\ell^{\times}$.  Therefore we must have $ \chi(1+\varpi^td)= {\chi'}^{2}(1+\varpi^td).
$
 This implies $\lambda-2\lambda'=0 \mod (\varpi)$, a contradiction to our assumption that $\lambda-2\lambda'\neq 0 \mod (\varpi)$. This completes the proof of our result. 
 \end{proof}  

 For $t \leq \ell$ and $i \leq \ldown$, define  $f(t,i) = \begin{cases}  
t-\max\{t-i, 0\}, & \text{ for } \lup > t; \\
\ldown, & \lup \leq t \text{ for } \text{ and }  i+t\geq\ell; \\ 
t-\lup +i, & \lup \leq t \text{ for } \text{ and } i+t<\ell. 
\end{cases}$

We now focus on $\sns$ constituents in $\wtc$. For  $ A=\left(\begin{smallmatrix}   
 \lambda-\lambda'-\varpi^{\ell-s}\beta & b \\ 
\varpi^{\ell-t} & \lambda'+\varpi^{\ell-s}\beta
\end{smallmatrix} \right)
$ to be $\sns$, we must have $2\lambda'-\lambda+2\varpi^{\ell-s}\beta=\varpi^i d$ for some $1\leq i \leq \ldown$ and $d\in \co_{\ldown}^\times$. Hence we will use $A=\left(\begin{smallmatrix}
          a  & b \\ 
\varpi^{\ell-t} & a+\varpi^i d
      \end{smallmatrix} \right) $ from now on and use the conditions $a=\lambda-\lambda'-\varpi^{\ell-s}\beta$ and $2\lambda'-\lambda+2\varpi^{\ell-s}\beta=\varpi^i d$ wherever needed.
  \begin{lemma}
      \label{lem: wtc SNS-coset equality}
      Fix a $\sns$ matrix $A =  \left(\begin{smallmatrix}
          a  & b \\ 
\varpi^{\ell-t} & a+\varpi^i d
      \end{smallmatrix} \right) 
\in \M_2(\co_{\ldown})$. For $g=\left(\begin{smallmatrix} 1 & 0 \\ 0 & w  \end{smallmatrix}\right)$ and $g'=\left(\begin{smallmatrix} 1 & 0 \\ 0 & w'  \end{smallmatrix}\right)$ with $
    w -w' \in (\varpi^{f(t,i)}) $, we have  $(\ztol \uol) g \s_A=(\ztol \uol) g'\s_A$.
  \end{lemma}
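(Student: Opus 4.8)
The plan is to prove the double-coset equality by producing explicit elements $b\in\ztol\uol$ and $s\in\s_A$ with $g'=bgs$; equivalently, to choose $b$ so that $s:=g^{-1}b^{-1}g'$ lands in $\s_A$. I would restrict attention to left factors of the shape $b=\left(\begin{smallmatrix}1 & u\\ 0 & 1+\varpi^{t}z\end{smallmatrix}\right)$, which lie in $\ztol\uol$ because $t\ge 1$ makes $1+\varpi^{t}z$ a unit. A direct $2\times 2$ computation then gives
\[
s=g^{-1}b^{-1}g'=\begin{pmatrix}1 & -uw'(1+\varpi^{t}z)^{-1}\\ 0 & (1+\varpi^{f}\gamma)(1+\varpi^{t}z)^{-1}\end{pmatrix},
\]
where $f:=f(t,i)$ and $\gamma\in\co_\ell$ is chosen so that $w^{-1}w'=1+\varpi^{f}\gamma$; such a $\gamma$ exists precisely because $w-w'\in(\varpi^{f})$. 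Here $u$ is a free parameter affecting only the $(1,2)$-entry of $s$, and as $z$ ranges over $\co_\ell$ the $(2,2)$-entry of $s$ runs through the coset $(1+\varpi^{f}\gamma)(1+\varpi^{t}\co_\ell)$, so that entry minus $1$ runs through $\varpi^{f}\gamma+(\varpi^{t})$. It therefore remains to choose $z$ and $u$ making $s\in\s_A$.

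For this I would invoke the description of $\s_A$ from \autoref{sec:E.construction} and \autoref{O.construction}: one has $\K^{\ldown}\subseteq\s_A$ and $\s_A=\C_{\gol}(\tilde A)\K^{\ldown}$ for a lift $\tilde A\in\g(\co_\ell)$ of $A$. Since $\K^{\ldown}$ is the kernel of reduction modulo $\varpi^{\ldown}$ and is contained in $\s_A$, the group $\s_A$ is the full preimage of $\C_{\GL_2(\co_{\ldown})}(A)$; and because $b\in\co_\ell^{\times}$ makes $A$ regular over the local ring $\co_{\ldown}$, this centralizer equals $\{xI+yA:x,y\in\co_{\ldown}\}\cap\GL_2(\co_{\ldown})$. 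Hence $s\in\s_A$ if and only if $\bar s\equiv xI+yA\pmod{\varpi^{\ldown}}$ for some $x,y\in\co_{\ldown}$, where $\bar s$ is the reduction of $s$ (automatically invertible, being a product of units). Comparing the entries of $\bar s$ with those of
\[
xI+yA=\begin{pmatrix}x+ya & yb\\ y\varpi^{\ell-t} & x+y(a+\varpi^{i}d)\end{pmatrix}
\]
gives: the $(2,1)$-entry forces $y\varpi^{\ell-t}\equiv 0\pmod{\varpi^{\ldown}}$; the $(1,1)$-entry determines $x$; the $(1,2)$-entry determines a residue for $u$ (as $b$ and $w'$ are units), for which one takes any lift; and subtracting the $(1,1)$- from the $(2,2)$-equation yields $y\varpi^{i}d\equiv\bar\eta-1\pmod{\varpi^{\ldown}}$. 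Using the range of the $(2,2)$-entry found above, together with $(\varpi^{t})+(\varpi^{\ldown})=(\varpi^{\min\{t,\ldown\}})$, the whole system collapses to the single requirement: find $y\in\co_\ell$ with $y\varpi^{\ell-t}\equiv 0\pmod{\varpi^{\ldown}}$ and $y\varpi^{i}d\equiv\varpi^{f}\gamma\pmod{\varpi^{\min\{t,\ldown\}}}$.

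The final step is to solve this requirement in each branch of the definition of $f(t,i)$, noting that $f(t,i)=\min\{t,i\}$ when $\lup>t$. When $f\ge\min\{t,\ldown\}$ one simply takes $y=0$; this disposes of the branch $\lup\le t$, $i+t\ge\ell$ (where $f=\ldown=\min\{t,\ldown\}$), and the range $i\ge t$ inside $\lup>t$ (where $f=t=\min\{t,\ldown\}$, using $t\le\lup-1\le\ldown$). In the two remaining sub-cases one sets $y=\varpi^{f-i}\gamma d^{-1}$: one checks $f\ge i$ directly from the formula — namely $f=i$ when $\lup>t$ and $i<t$, and $f=i+t-\lup\ge i$ when $\lup\le t$ and $i+t<\ell$ — so that $y\varpi^{i}d=\varpi^{f}\gamma$ exactly; and one checks the residual constraint $\val(y)+(\ell-t)=f-i+\ell-t\ge\ldown$, i.e. $f\ge i+t-\lup$, which holds with room to spare ($0>t-\lup$) in the first sub-case and with equality in the second. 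I expect this case-matching — lining up the three-branch definition of $f(t,i)$ against the two inequalities $f\ge i$ and $f\ge i+t-\lup$ while tracking whether $t$ or $\ldown$ is smaller — to be the only genuinely fiddly point; everything else is the explicit $2\times 2$ arithmetic of the first paragraph together with the standard description of the centralizer of a regular matrix over a local ring and its behaviour under reduction modulo $\varpi^{\ldown}$.
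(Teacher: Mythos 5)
Your argument is correct and follows essentially the same route as the paper: both proofs exhibit explicit elements $b\in\ztol\uol$ (a $\ztol$-diagonal factor for the $\varpi^{t}$-shift plus a unipotent factor to clear the $(1,2)$-entry) and $s\in\s_A$ of the form $xI+y\tilde A$ modulo $\K^{\ldown}$, and then match the three branches of $f(t,i)$. The only difference is organizational — you first reduce uniformly to the single congruence on $y$ via the description of $\s_A$ as the preimage of $\{xI+yA\}$, whereas the paper writes down the multipliers case by case — and your version in fact handles the subcase $\lup>t$, $i<t$ (where $f=i<t$) cleanly, which requires the $\s_A$-side shift rather than only the $\ztol$-side one.
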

  \begin{proof}
We have different cases depending on the function $f(t,i)$ and we will assume that $w'=w+\varpi^{f(t,i)}{w_0}$ for some ${w_0}\in \co_{\ell}$.

\noindent \textit{ Case 1: $(\lup> t)$}
For this case $\ell-t>\ldown$, therefore $A =  \left(\begin{smallmatrix} 
          a& b \\
          0 & a+\varpi^id
      \end{smallmatrix} \right).$ 

      \begin{enumerate}
          \item For $i\leq t$, we have 
             \begin{equation*}
      \label{eq:a1}
         \begin{pmatrix}
        1 & 0\\
        0 & 1+\varpi^t{w^{-1}w_0}
      \end{pmatrix}
      \begin{pmatrix}
          1 & 0\\
          0 & w
      \end{pmatrix}= \begin{pmatrix}
          1 & 0\\
          0 & w'
      \end{pmatrix}. 
      \end{equation*}
      \item For $i>t$, take $y=w^{-1}{w_0}d^{-1}$ and $\alpha w=-b y{(1+\varpi^id y)}^{-1}.$ We get
\begin{equation}
\label{eq: 2nd equation }
\begin{pmatrix}
    1 & \alpha \\
    0 & 1
\end{pmatrix}
    \begin{pmatrix}
        1 & 0\\
        0 & w
    \end{pmatrix}\begin{pmatrix}
        1 & b y\\
        0 & 1+\varpi^i dy
    \end{pmatrix}=\begin{pmatrix}
        1 & 0\\
        0 & w'
    \end{pmatrix}.
\end{equation}

      \end{enumerate}
      
\noindent   \textit{Case 2: $(\lup\leq t \text{ and } i+t\geq \ell)$}   For this case $f(t, i) = \ldown.$ Hence, the result follows because the cosets are determined modulo $(\varpi^{\ldown})$.
   
\noindent  \textit{Case 3: $(\lup\leq t \text{ and } i+t< \ell)$}  Take  $\gamma=-d^{-1}{w_0}w^{-1}$, $y=-\varpi^{t-\lup}\gamma$ ,  $x=1-ay$, and $\alpha=-b w^{-1}y(x+y(a+\varpi^i d))^{-1}$. Then we have
\begin{equation}
\label{eq:a3}
\begin{pmatrix}
    1 & \alpha\\
    0 & 1
\end{pmatrix}
\begin{pmatrix}
    1 & 0\\
    0 & w
\end{pmatrix}
\begin{pmatrix}
    x+ay & b y\\
    \varpi^{\ell-t}y+\varpi^{\ldown} \gamma & x+(a+\varpi^i d)y
\end{pmatrix}=
\begin{pmatrix}
    1 & 0\\
    0 & w'
\end{pmatrix}.
\end{equation}
  Hence the claim follows.  \end{proof}

       The following is parallel to \autoref{lem:Z Kl2 double cosets} for the current case. It follows by direct computations, so we omit the proof here. Consider the set   $\Sigma_t = \left\{ \left(\begin{smallmatrix}  1 & 0 \\ z& w \end{smallmatrix}\right),  \left(\begin{smallmatrix} 0 & 1 \\ w & \varpi z' \end{smallmatrix}  \right) \right\} \subseteq \gol$. The following lemma lists important information regarding $\Sigma$ that we use without explicitly mentioning it. 
\begin{lemma}
\label{lem:Zt-Kl2 double cosets}
Let $A= \left(\begin{smallmatrix}     a  & b \\ 
\varpi^{\ell-t} & a+\varpi^i d
\end{smallmatrix}\right)\in \mathfrak{g}(\co_{\ldown})$. Then $\Sigma$   is an exhaustive set of  double coset representatives for the following: 
\begin{enumerate}
  \item For $ \ztol \uol \backslash \gol / \K^{\lup},$ where $z$ and $\varpi z'$ are determined modulo $(\varpi^{\lup})$ and $w$ is determined modulo $(\varpi^{s'})$ where $s'= \mathrm{min}\{t, \lup\}$.
\item For $ \ztol \uol \backslash \gol / \s_A$ with  $\lup\geq t $,  where $z$ and $\varpi z'$ are determined modulo $(\varpi^{\ldown})$ and $w$ is determined modulo $(\varpi^{s'})$ where $s'= \mathrm{min}\{t, \ldown\}$.
\item For $ \ztol \uol \backslash \gol  / \s_A$ with  $\lup <  t $, where $z$ and $\varpi z'$ are determined modulo $(\varpi^{\ell-t})$ and $w$ is determined modulo $(\varpi^{s'})$ where $s'= \mathrm{min}\{t, \ldown\}$.
\end{enumerate}

\end{lemma}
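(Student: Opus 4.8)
The plan is to mimic the proof of \autoref{lem:Z Kl2 double cosets}, the only genuinely new ingredient being that $\ztol$ is slightly larger than the centre $\zol$: a general element of $\ztol\uol$ has the form $\left(\begin{smallmatrix} a & b \\ 0 & c \end{smallmatrix}\right)$ with $a,c\in\co_\ell^\times$, $b\in\co_\ell$ arbitrary and $a\equiv c\bmod\varpi^{t}$, so left multiplication by $\ztol\uol$ lets one scale the first row by an arbitrary unit, add an arbitrary multiple of the second row to it, and then scale the second row by any unit that is $\equiv$ the first one modulo $\varpi^{t}$. Two things must be established: \emph{(a)} every double coset meets $\Sigma$, and \emph{(b)} the precise redundancy among the elements of $\Sigma$.

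For \emph{(a)}, one starts from an arbitrary $g\in\gol$ and combines the left operations above with the right operations coming from the relevant subgroup $H$ — here $H=\K^{\lup}$ in (1) and $H=\s_A=\C_{\gol}(\tilde A)\K^{\ldown}$ in (2) and (3). The reduction splits according to whether $g$ can be brought to the shape $\left(\begin{smallmatrix} 1 & 0 \\ * & * \end{smallmatrix}\right)$ (equivalently, whether the bottom--right entry can be made a unit): in that case one clears the $(1,2)$--entry and rescales to reach $\left(\begin{smallmatrix} 1 & 0 \\ z & w \end{smallmatrix}\right)$ with $w$ a unit; otherwise the determinant condition forces the $(1,2)$-- and $(2,1)$--entries to be units, one clears the $(1,1)$--entry, rescales, and lands on $\left(\begin{smallmatrix} 0 & 1 \\ w & v \end{smallmatrix}\right)$ with $w$ a unit and $v$ a non-unit, i.e.\ $v=\varpi z'$. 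For part (1) this is cleanest after passing to $\gol/\K^{\lup}\cong\GL_2(\co_{\lup})$, where the problem becomes the description of the left orbits of $\Z^{s'}\U(\co_{\lup})$ on $\GL_2(\co_{\lup})$, the exponent $s'=\min\{t,\lup\}$ being exactly the reduction modulo $\varpi^{\lup}$ of the congruence $a\equiv c\bmod\varpi^{t}$; for parts (2) and (3) one uses the description $\s_A=\C_{\gol}(\tilde A)\K^{\ldown}$, so the relevant right operations are the small ones from $\K^{\ldown}$ together with those coming from the abelian centraliser.

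For \emph{(b)}, given two elements $g,g'$ of $\Sigma$ of the same shape one writes down and solves entry by entry the equation $\gamma g h=g'$ with $\gamma\in\ztol\uol$ and $h\in H$ — the same bookkeeping as in the proofs of \autoref{lem:Z Kl2 double cosets}(3) and \autoref{lem: wtc SNS-coset equality}. Already the subfamily $h=I$, for the shape $\left(\begin{smallmatrix} 1 & 0 \\ z & w \end{smallmatrix}\right)$, forces $\gamma=\left(\begin{smallmatrix} 1 & 0 \\ 0 & c \end{smallmatrix}\right)$ with $c\equiv1\bmod\varpi^{t}$, hence $(z,w)\mapsto(cz,cw)$; this is precisely the place where $\ztol$ behaves differently from $\zol$ (for $\zol$ one is forced to take $c=1$), and it is what makes $w$ determined to a coarser precision than $z$. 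Feeding in the remaining $h\in H$ — the congruence subgroup together with, in the $\s_A$ cases, the centraliser — and invoking \autoref{lem: wtc SNS-coset equality} for one half of the $w$--identification, one reads off the stated moduli: $z$ and $\varpi z'$ modulo $\varpi^{\lup}$ for $H=\K^{\lup}$, modulo $\varpi^{\ldown}$ for $H=\s_A$ with $\lup\ge t$, and modulo $\varpi^{\ell-t}$ for $H=\s_A$ with $\lup<t$ (in the last case the lower--left entry $\varpi^{\ell-t}$ of $A$, now small relative to $\varpi^{\lup}$, limits how far the $(2,1)$--slot can be moved), together with $w$ modulo the power $\varpi^{s'}$ recorded in the statement. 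As a consistency check, and as an alternative to solving $\gamma g h=g'$ directly, one can match the cardinality of the resulting parametrised list against $|\ztol\uol\backslash\gol/H|$ computed from the orders of $\ztol\uol$, $H$, and their relevant intersections.

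The genuinely routine case is $H=\K^{\lup}$, where passage to $\GL_2(\co_{\lup})$ trivialises the right action. The main obstacle is parts (2) and (3): $\s_A$ is neither normal nor a congruence subgroup, and its shape depends on the valuations $\ell-t$ and $i$ of the entries of $A$; one has to track how these valuations interact with the $\varpi^{t}$--slack in the diagonal of $\ztol$ separately in the ranges $\lup\ge t$ and $\lup<t$ (as in the case split governing $f(t,i)$ in \autoref{lem: wtc SNS-coset equality}), and then verify that the list of reductions so obtained is simultaneously exhaustive and irredundant — this is exactly the content compressed into the statement's phrases "$z$ and $\varpi z'$ determined modulo $\varpi^{\bullet}$'' and "$w$ determined modulo $\varpi^{s'}$'' in the three cases.
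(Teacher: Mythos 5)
Your proposal is correct and takes essentially the same approach as the paper: the paper omits the proof of this lemma entirely, saying only that it is parallel to \autoref{lem:Z Kl2 double cosets} and ``follows by direct computations'', and your plan --- reduce an arbitrary $g$ to the two shapes in $\Sigma$ via row operations from $\ztol\uol$ and right multiplication by $\K^{\lup}$ resp.\ $\s_A=\C_{\gol}(\tilde{A})\K^{\ldown}$, then solve $\gamma g h=g'$ entry by entry to read off the moduli --- is exactly that computation. You also correctly isolate the one genuinely new ingredient relative to \autoref{lem:Z Kl2 double cosets}, namely the extra scaling $(z,w)\mapsto(cz,cw)$ with $c\equiv 1\bmod\varpi^{t}$ coming from $\ztol\supsetneq\zol$, which is precisely what produces the coarser modulus $\varpi^{s'}$ on $w$.
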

 
In this section, whenever $t$ is clear from the context, we shall also denote $\Sigma_t$ by $\Sigma$ itself.

  \begin{lemma}
  \label{lem:coset-representative-form-wtc}
 Suppose $s=\mathrm{max}\{t,\lup\}$. For a $\sns$ matrix $A = \left(\begin{smallmatrix}   
a  & b \\ 
\varpi^{\ell-t} & a+\varpi^i d
\end{smallmatrix}\right)$, we have      $(\tilde{\chi}, \psi_t)=\psi_A^g$ on $\ztol \uol \cap \K^{\lup}$ for some $g \in \Sigma$ if and only if $g=\left(\begin{smallmatrix}  
    1 & 0\\
    \varpi z & w
\end{smallmatrix}\right)$ for some $z\in \co_\ell$ and 
\begin{gather*}
    \varpi zw^{-1} b  =  0 \mod(\varpi^{\ell -s}), \\ 
    \varpi^{\ell-t}(w-1) -\varpi^2 z^2  w^{-1} b -(\varpi^i d)\varpi z =  0 \mod (\varpi^{\ldown}).  
\end{gather*}
In particular $\varpi z \in (\varpi^{\ell - s})$. 
  \end{lemma}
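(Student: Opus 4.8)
The plan is to follow the template of the proof of \autoref{conditions with Kl2}, the new ingredient being the behaviour of the $\chi'$-direction coming from the extra torus factor in $\ztol$. First I would describe the abelian group $\ztol \uol \cap \K^{\lup}$: since $\ztol \uol$ consists exactly of the matrices $\mat{a}{c}{0}{b}$ with $a \in \co_\ell^\times$, $c \in \co_\ell$ and $a \equiv b \pmod{\varpi^{t}}$, intersecting with $\K^{\lup}$ gives the matrices $\mat{1+\varpi^{\lup}x}{\varpi^{\lup}c}{0}{1+\varpi^{\lup}x'}$ with $x' - x \in (\varpi^{\max\{t-\lup,0\}})$, and hence with $\varpi^{\lup}(x'-x) \in (\varpi^{s})$. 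Consequently this group is generated by the three families $\mat{1+\varpi^{\lup}x}{0}{0}{1+\varpi^{\lup}x}$ (scalars), $\mat{1}{0}{0}{1+\varpi^{s}e}$ (the $\ztol$-direction), and $\mat{1}{\varpi^{\lup}c}{0}{1}$ (the unipotent direction). Since $(\tilde\chi,\psi_t)$ and every $\psi_A^{g}$ are one-dimensional, it suffices to compare them on these three families; checking that they do generate $\ztol \uol \cap \K^{\lup}$ is a short direct verification.

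Next I would dispose of the coset representatives $g = \mat{0}{1}{w}{\varpi z'} \in \Sigma$. Evaluating $\psi_A^{g}$ on the unipotent generator, the $(2,1)$-entry of the relevant conjugate of $A$ equals $wb + \varpi(\cdots)$; as $A$ is regular and $\ell > t$ we have $b \in \co_\ell^\times$, so this entry is a unit, whereas $(\tilde\chi,\psi_t)\mat{1}{\varpi^{\lup}c}{0}{1} = \psi(\varpi^{\ell-t+\lup}c)$ forces it to be $\equiv \varpi^{\ell-t} \equiv 0 \pmod{\varpi}$ (recall $\ell-t \geq 1$), a contradiction because $\psi$ is non-degenerate. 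Hence any $g$ realizing $(\tilde\chi,\psi_t) = \psi_A^{g}$ on $\ztol \uol \cap \K^{\lup}$ must be of the form $\mat{1}{0}{z}{w}$ with $w \in \co_\ell^\times$; this is exactly the reduction made in \autoref{conditions with Kl2}.

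For $g = \mat{1}{0}{z}{w}$ with $w \in \co_\ell^\times$ I would compute the conjugate $gAg^{-1}$ once and read off the three comparisons. On scalars both characters equal $\psi(\varpi^{\lup}\lambda x)$, using $\mathrm{tr}(A) = 2a + \varpi^{i}d = \lambda$ (which holds for the $A$ supplied by \autoref{lem: non-regular constituents-W-type of the matrix A-wtc}), so this case imposes nothing. On the unipotent generator, the $(2,1)$-entry of $gAg^{-1}$ must be $\equiv \varpi^{\ell-t} \pmod{\varpi^{\ldown}}$; since this entry equals $\varpi^{\ell-t}w - bz^{2}w^{-1} - \varpi^{i}dz$ and $b$ is a unit, reducing modulo $\varpi$ forces $z \in (\varpi)$, and then, writing $g = \mat{1}{0}{\varpi z}{w}$, the congruence becomes exactly the second displayed relation. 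On the $\ztol$-generator, $\psi_A^{g}\mat{1}{0}{0}{1+\varpi^{s}e} = \psi(\varpi^{s}S'e)$ with $S'$ the $(2,2)$-entry of $gAg^{-1}$, so matching with $\chi'(1+\varpi^{s}e) = \psi(\varpi^{s}\lambda'e)$ amounts to $S' \equiv \lambda' \pmod{\varpi^{\ell-s}}$; substituting the defining relations $a = \lambda - \lambda' - \varpi^{\ell-s}\beta$ and $\varpi^{i}d = 2\lambda' - \lambda + 2\varpi^{\ell-s}\beta$ reduces $S' - \lambda'$ to $\varpi z w^{-1}b + \varpi^{\ell-s}\beta$, so the condition is precisely $\varpi z w^{-1}b \equiv 0 \pmod{\varpi^{\ell-s}}$ — the first displayed relation. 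Running these steps in reverse, if $g = \mat{1}{0}{\varpi z}{w}$ satisfies both relations then $(\tilde\chi,\psi_t) = \psi_A^{g}$ on all three generating families, hence on $\ztol \uol \cap \K^{\lup}$. Finally $\varpi z \in (\varpi^{\ell-s})$ follows from the first relation since $w^{-1}b$ is a unit.

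The main obstacle is the valuation bookkeeping: one must keep straight the moduli that appear ($\varpi^{\ell}$ for $\psi$, $\varpi^{\ldown}$ for $\psi_A$ on $\K^{\lup}$, and $\varpi^{\ell-s}$ for $\chi'$), together with the split between $t \geq \lup$ and $t < \lup$ that is hidden inside $s$; and in the $\ztol$-direction one has to use the defining relations between $a, d, i, \beta$ and $\lambda, \lambda'$ to see the cancellation that produces the clean relation $\varpi z w^{-1}b \equiv 0 \pmod{\varpi^{\ell-s}}$. Everything else is the same bilinear-form computation on $\K^{\lup}$ already carried out in \autoref{conditions with Kl2}.
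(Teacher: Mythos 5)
Your proposal is correct and follows essentially the same route as the paper: both reduce to cosets of the form $\left(\begin{smallmatrix}1&0\\ \varpi z&w\end{smallmatrix}\right)$ via the unit/non-unit argument on the $(2,1)$-entry (the paper does this by citing $\wtc\subseteq\vtc$ and \autoref{conditions with Kl2}), and both then compare the two characters on a general element of $\ztol\uol\cap\K^{\lup}$ — your three generating families correspond exactly to the paper's comparison of the coefficients of $y$, $\alpha$, and $x$ — before substituting $a=\lambda-\lambda'-\varpi^{\ell-s}\beta$ and $\varpi^i d=2\lambda'-\lambda+2\varpi^{\ell-s}\beta$ to obtain the two displayed congruences.
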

\begin{proof}

We use an expression similar to \autoref{eq:mac-res-thm} for $\wtc$. 
Since $\wtc\subseteq \vtc$, only $g\in \Sigma$ that can give $(\tilde{\chi}, \psi_t)(X)=\psi_A^g(X)$ for $X \in \ztol \uol \cap \K^\lup$ are  such that $g=\left(\begin{smallmatrix}
      1 & 0 \\ \varpi z & w
  \end{smallmatrix}\right)$.
For $X= \left(\begin{smallmatrix}  1+\varpi^{\lup }y  & \varpi^{\lup }x \\ 0 & 1+\varpi^{\lup }y +\varpi^s\alpha   \end{smallmatrix}\right) \in \ztol \uol \cap \K^{\lup}$, we have
  \begin{gather*}
      \psi_A^g(X)=\psi(\varpi^{\lup}(2a+\varpi^i d)y+\varpi^{s-\lup}\alpha(a+\varpi^i d-bw^{-1}\varpi z)+(-\varpi^i d \varpi z-b \varpi^2 z^2 w^{-1}+\varpi^{\ell-t}w)x),
       \end{gather*}
and 
\[
(\tilde{\chi}, \psi_t)(X)=\psi(\varpi^{\lup}(\varpi^{\ell-t}x+\lambda y+\varpi^{s-\lup}\lambda'\alpha)).
\]
By comparing coefficients of $x,y$ and $\alpha$, we obtain
 \begin{gather*}
 2a+\varpi^i d=\lambda \mod(\varpi^{\ldown}), \\
a+\varpi^i d-\lambda'- w^{-1} \varpi z b =0 \mod (\varpi^{\ell-s}),\\ 
  \varpi^{\ell - t}  ( w - 1) - w^{-1} \varpi^2 z^2 b -(\varpi^i d)\varpi z = 0 \mod (\varpi^{\ldown}).
  \end{gather*}
  Now using the conditions $a=\lambda-\lambda'-\varpi^{\ell-s}\beta$ and $2\lambda'-\lambda+2\varpi^{\ell-s}\beta=\varpi^i d$ in above equations we get our result.
  \end{proof}
The following lemma, its proof is by direct computations, gives the description of the group $\ztol \uol \cap \s_A$. 
\begin{lemma}
\label{lem: ZtU intersection SA}
    Let $A=\left(\begin{smallmatrix}
        a & b\\
        \varpi^{\ell-t} & a+\varpi^i d
    \end{smallmatrix}\right)\in \mathfrak{g}(\co_{\ldown})$, then  
    \[
     \ztol\uol \cap \s_A=
     \begin{cases}
        \{ \left( \begin{smallmatrix}
        x & \varpi^{max\{0,t-i\}}y\\
        0 & x+\varpi^{i+max\{0,t-i\}} d y+\varpi^{\ldown}c 
        \end{smallmatrix}\right)\}, & \text{ if } \lup > t;  \\
          \{ \left( \begin{smallmatrix}
        x & \varpi^{t-{\lup}}y\\
        0 & x+\varpi^t c 
        \end{smallmatrix}\right)\}, & \text{ if } \lup \leq  t  \text{ and } i\leq \ldown \text{ and } i+t \geq \ell; \\
          \{ \left( \begin{smallmatrix}
        x & \varpi^{\ldown-i}y\\
        0 & x+\varpi^t c 
        \end{smallmatrix}\right)\}, & \text{ if } \lup \leq t  \text{ and } i<\ldown \text{ and } i+t < \ell.
    \end{cases}
    \]
    
\end{lemma}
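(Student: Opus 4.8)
The plan is to first replace membership in $\s_A$ by a congruence condition, and then intersect with the explicit shape of $\ztol\uol$. Recall from \autoref{sec:E.construction} (for $\ell$ even) and \autoref{O.construction} (for $\ell$ odd) that the inertia group $\s_A$ of $\psi_A\in\widehat{\K^{\lup}}$ equals $\C_{\gol}(\widehat A)\K^{\ldown}$, where $\widehat A\in\M_2(\co_\ell)$ is any lift of $A$. Since $A$ is $\sns$ it is regular, so its reduction modulo $\varpi$ is non-scalar and $\C_{\gol}(\widehat A)=(\co_\ell I_2+\co_\ell\widehat A)^\times$ surjects onto $\C_{\GL_2(\co_{\ldown})}(A)=(\co_{\ldown}I_2+\co_{\ldown}A)^\times$; hence
\[
\s_A=\rho_{\ldown}^{-1}\bigl(\C_{\GL_2(\co_{\ldown})}(A)\bigr),
\]
i.e. $g\in\s_A$ exactly when $\rho_{\ldown}(g)$ commutes with $A$ in $\M_2(\co_{\ldown})$. (One could equivalently run the whole computation brute-force from $\s_A=(\co_\ell I_2+\co_\ell\widehat A)^\times\K^{\ldown}$; the bookkeeping is the same.)

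Next I would write a general element of $\ztol\uol$ as $g=\left(\begin{smallmatrix}x & z\\ 0 & x'\end{smallmatrix}\right)$ with $x\in\co_\ell^\times$, $z\in\co_\ell$ and $x'-x\in\varpi^t\co_\ell$, and impose $g\in\s_A$. Expanding the commutator $\rho_{\ldown}(g)A=A\rho_{\ldown}(g)$ in $\M_2(\co_{\ldown})$ for $A=\left(\begin{smallmatrix}a & b\\ \varpi^{\ell-t} & a+\varpi^i d\end{smallmatrix}\right)$ and using that $b$ is a unit, a short calculation shows that $g\in\ztol\uol\cap\s_A$ if and only if, writing bars for reduction modulo $\varpi^{\ldown}$,
\[
\varpi^{\ell-t}\bar z\equiv 0,\qquad \varpi^{\ell-t}(\bar x'-\bar x)\equiv 0,\qquad b(\bar x'-\bar x)\equiv\varpi^i d\,\bar z\pmod{\varpi^{\ldown}}.
\]

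Finally I would solve this system together with the ambient constraint $x'-x\in(\varpi^t)$, splitting according to the position of $\ell-t$ and $t$ relative to $\ldown$; this is exactly what produces the three cases. If $\lup>t$ then $\ell-t>\ldown$, so the first two congruences are vacuous and the third alone is in force: it forces $\bar z\in(\varpi^{\max\{0,t-i\}})$, and then pins $\bar x'-\bar x$ modulo $\varpi^{\ldown}$ in terms of $\bar z$ up to a free multiple of $\varpi^{\ldown}$, giving the first family. If $\lup\le t$ then $t\ge\ldown$, so $\varpi^t\equiv 0\pmod{\varpi^{\ldown}}$, the diagonal entries already agree modulo $\varpi^{\ldown}$, and what remains is $\varpi^i d\,\bar z\equiv 0$ together with $\varpi^{\ell-t}\bar z\equiv 0$ modulo $\varpi^{\ldown}$; comparing the two resulting valuations $\ldown-i$ and $t-\lup$, whose order is governed by the sign of $\ell-(i+t)$, one gets $\bar z\in(\varpi^{t-\lup})$ when $i+t\ge\ell$ and $\bar z\in(\varpi^{\ldown-i})$ when $i+t<\ell$, while the diagonal difference stays the unconstrained $\varpi^t c$ coming from $\ztol$; these are the second and third families. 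Reading off the surviving parameters gives the matrix descriptions in the statement. The only step that is not purely mechanical is this last valuation bookkeeping — deciding which congruence is binding, in particular at the boundary values $i=\ldown$ and $i+t=\ell$, and verifying in each case that the first two congruences follow from the chosen valuation of $\bar z$ — and this is precisely where the $\max/\min$ exponents and the three-way split come from.
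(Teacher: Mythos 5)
Your argument is correct and is exactly the ``direct computation'' the paper invokes without writing out: identifying $\s_A$ with $\rho_{\ldown}^{-1}(\C_{\G_{\ldown}}(A))$ via regularity of $A$, reducing membership to the three congruences you list, and then doing the valuation bookkeeping against the ambient constraint $x'-x\in(\varpi^t)$ reproduces the three families and the $\max$-exponents. One small point worth recording: your third congruence gives $\bar x'-\bar x=b^{-1}\varpi^i d\,\bar z$, so in the first case the printed formula $x+\varpi^{i+\max\{0,t-i\}}dy+\varpi^{\ldown}c$ is literally correct only after absorbing the unit $b^{-1}$ into $d$ (harmless where the lemma is applied, since only the valuations matter there, but your computation is the accurate one).
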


\begin{proposition}
For any $\rho \in \mathrm{Irr}^{\sns}(\gol)$, we have $\dim_{\mathbb C}(\mathrm{Hom}_{\gol}(\rho, \wtc)) \leq 1.$ 
\end{proposition}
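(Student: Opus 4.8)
The plan is to run the same Mackey-theoretic analysis that was used for the $\ss$ case, but now with the finer double-coset data recorded in \autoref{lem:Zt-Kl2 double cosets} and \autoref{lem: ZtU intersection SA}, and with the extra rigidity coming from the constraint $2\lambda'-\lambda+2\varpi^{\ell-s}\beta = \varpi^i d$ that a $\sns$ constituent of $\wtc$ must satisfy. First I would fix a $\sns$ representation $\rho$, write $\rho \cong \ind_{\s_A}^{\gol}\tilde{\sigma}$ for the appropriate $A = \left(\begin{smallmatrix} a & b \\ \varpi^{\ell-t} & a+\varpi^i d\end{smallmatrix}\right)$ as in \autoref{lem: non-regular constituents-W-type of the matrix A-wtc}(2), and expand
\[
\langle \wtc, \rho\rangle_{\gol} = \bigoplus_{g \in \ztol\uol \backslash \gol / \s_A} \langle (\tilde{\chi},\psi_t)^g, \tilde{\sigma}\rangle_{(\ztol\uol)^g \cap \s_A}.
\]
By \autoref{lem:coset-representative-form-wtc}, only cosets with $g = \left(\begin{smallmatrix} 1 & 0 \\ \varpi z & w\end{smallmatrix}\right)$ and $\varpi z \in (\varpi^{\ell-s})$ can contribute, and among those \autoref{lem: wtc SNS-coset equality} lets me reduce the representative modulo $(\varpi^{f(t,i)})$ on the diagonal entry $w$; combined with \autoref{lem:Zt-Kl2 double cosets}(2)–(3) this should cut the list of contributing double cosets down to a single one (the argument being: any two contributing $g$'s force $w \equiv w' \bmod (\varpi^{f(t,i)})$ and $\varpi z \equiv \varpi z' $ to the same precision, hence the same double coset, exactly as in the $\ell$-even/$\ell$-odd subarguments of \autoref{cor: SNS in induction Z}, Case~1).

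Once the contributing coset is unique, say $g_0$, it remains to bound the local term $\langle (\tilde{\chi},\psi_t)^{g_0}, \tilde{\sigma}\rangle_{(\ztol\uol)^{g_0}\cap \s_A}$ by one. Here I would split into $\ell$ even and $\ell$ odd. For $\ell$ even, $\tilde{\sigma} = \widetilde{\psi_A}$ is one-dimensional, so the local term is automatically $\leq 1$ and the claim is immediate. For $\ell$ odd, $\tilde{\sigma}$ is $q$-dimensional with $\tilde{\sigma}|_{\K^\lup} \cong \psi_A^{\oplus q}$, and I would invoke \autoref{prop:multiplicity-free-SA-representation}: restricting $\tilde{\sigma}$ to $\U_A = \zol\uol \cap \s_A$ is multiplicity free, and the group $(\ztol\uol)^{g_0}\cap \s_A$ — whose explicit shape is given by \autoref{lem: ZtU intersection SA} — sits inside $\U_A\K^\lup$ in such a way that the character $(\tilde{\chi},\psi_t)^{g_0}$ restricted there is detected inside a single $\widetilde{\psi_A}$-isotypic line. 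That forces the local multiplicity to be $\leq 1$ and finishes the proof. The case $t \geq \lup$ versus $t < \lup$ will have to be handled separately when matching $(\ztol\uol)^{g_0}\cap\s_A$ against the domain on which \autoref{prop:multiplicity-free-SA-representation} applies, since the latter is stated for $t \geq \lup$; for $t < \lup$ one instead uses that $f(t,i) = t - \max\{t-i,0\}$ already pins $w$ tightly enough that the local group is abelian and the character-comparison is direct.

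The main obstacle I anticipate is the uniqueness-of-contributing-coset step when $\lup \leq t$: there the congruences in \autoref{lem:coset-representative-form-wtc} only determine $\varpi z$ modulo $(\varpi^{\ell-t})$ and $w$ modulo $(\varpi^{s'})$ with $s' = \min\{t,\ldown\}$, so one has to check carefully that the second displayed congruence there, together with the reduction in \autoref{lem: wtc SNS-coset equality}, genuinely collapses all solutions to one double coset rather than a $q$-dependent family — otherwise the bound would fail, as it does for $\vtc$ when $t < \ell-1$. The point that saves $\wtc$ (but not $\vtc$) is precisely the additional character ${\chi'}$ on $1+\varpi^t\cO_\ell$: the extra equation $a + \varpi^i d - \lambda' - w^{-1}\varpi z\, b \equiv 0 \bmod(\varpi^{\ell-s})$ in \autoref{lem:coset-representative-form-wtc}, which has no analogue for $\vtc$, is what rigidifies $z$ and $w$ simultaneously. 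I would make sure to extract from that equation the statement that $z$ is determined by $w$ (up to the relevant precision), and then feed it back into the second congruence to show $w$ is determined modulo $(\varpi^{f(t,i)})$, closing the loop.
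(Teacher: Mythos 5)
Most of your outline tracks the paper's proof closely: the Mackey expansion over $\ztol\uol\backslash\gol/\s_A$, the reduction of contributing cosets to $\left(\begin{smallmatrix}1&0\\0&w\end{smallmatrix}\right)$ via \autoref{lem:coset-representative-form-wtc}, the uniqueness of the contributing double coset by forcing $w\equiv w'\bmod(\varpi^{f(t,i)})$ and invoking \autoref{lem: wtc SNS-coset equality}, the immediate conclusion for $\ell$ even (where $\tilde{\sigma}$ is one-dimensional), and the appeal to \autoref{prop:multiplicity-free-SA-representation} for $\ell$ odd with $t\geq\lup$. Your observation that the extra congruence coming from ${\chi'}$ is exactly what rigidifies the situation for $\wtc$ but not for $\vtc$ is also the right diagnosis.

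The gap is in your treatment of the remaining case, $\ell$ odd and $t<\lup$. You claim that there ``the local group is abelian and the character-comparison is direct.'' The group $\hta=(\ztol\uol)\cap\s_A$ described in \autoref{lem: ZtU intersection SA} is \emph{not} abelian in this range; in fact the paper proves that $\tilde{\sigma}|_{\hta}$ is an \emph{irreducible} $q$-dimensional representation, which is only possible because $\hta$ is nonabelian. Moreover, even if you could replace $\hta$ by some abelian subgroup, a ``direct character comparison'' does not bound the multiplicity: $\tilde{\sigma}$ is $q$-dimensional, and its restriction to an abelian group is a sum of $q$ characters which could a priori repeat, so you would still need an argument that the character $(\tilde{\chi},\psi_t)$ occurs at most once. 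The paper's actual argument uses the alternative construction of $\sns$ representations from \autoref{sec: construction-SNS} (available precisely for $\ell$ odd): writing $\tilde{\sigma}\cong\ind_{\mathrm{N}\tC_{\gol}(\tilde A)}^{\s_A}\psi_A''$, computing $\mathrm{Res}_{\hta}^{\s_A}\tilde{\sigma}$ by Mackey using $\hta\,\mathrm{N}\tC_{\gol}(\tilde A)=\s_A$, and then checking via Clifford theory that the stabilizer of $\psi_A''|_{\hta\cap\mathrm{N}\tC_{\gol}(\tilde A)}$ in $\hta$ equals $\hta\cap\mathrm{N}\tC_{\gol}(\tilde A)$, so the restriction is irreducible and the local intertwining number with any character is at most one. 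Some such structural input on $\tilde{\sigma}|_{\hta}$ is unavoidable here, and your proposal does not supply it.
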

\begin{proof}
Let $\tilde{\sigma}$ be an irreducible representation of $\s_A$ lying above $\psi_A$ such that $\rho \cong \ind_{\s_A}^{\gol}\tilde{\sigma}$. By definition of $\wtc$ and Mackey's restriction formula,   \[\langle  \wtc, \ind_{\s_A}^{\gol}(\tilde{\sigma})\rangle \neq 0\,\,  \mathrm{if\, and \, only \, if}\,\, \langle (\chi, {\chi'}, \psi_t),\tilde{\sigma}^g \rangle \neq 0 \,\, \mathrm{on}\,\, (\ztol \uol) \cap \s_A^g, \] for some $g \in \ztol \uol \backslash \gol / \s_A$. Since $\zol \uol \cap \K^{\lup}$ is a subgroup of $(\ztol \uol) \cap \s_A^g$ and $\tilde{\sigma}|_{\K^{\lup}} \cong (\psi_A)^{\oplus q^{\lup - \ldown}}$, by \autoref{lem:coset-representative-form-wtc}, we only need to consider the double coset representatives $g$ of $(\ztol \uol) \backslash \gol / \s_A$ of the form $ g =\left(\begin{smallmatrix} 1 & 0 \\ \varpi^{\ell-s}z & w   \end{smallmatrix}\right)\in \Sigma$. By  \autoref{lem:coset-representative-form-wtc}, these cosets can be further reduced to the form $\left(\begin{smallmatrix} 1 & 0 \\ 0 & w \end{smallmatrix}\right)$. Hence  we have $(\ztol \uol )\cap \s_A^g= (\ztol \uol)\cap \s_A$.
 Suppose $g = \left(\begin{smallmatrix} 1 & 0 \\ 0 & w  \end{smallmatrix} \right) $ and $g'=\left(\begin{smallmatrix} 1 & 0 \\ 0 & w'   \end{smallmatrix} \right) $ with
 \begin{gather}
 \label{eqn: wtc equalities for different cosets}
      \langle (\tilde{\chi}, \psi_t), \tilde{\sigma}^g \rangle_{(\ztol \uol) \cap \s_A} \neq 0 \,\, \mathrm{and} \,\, \langle  (\tilde{\chi}, \psi_t),  \tilde{\sigma}^{g'} \rangle_{(\ztol \uol) \cap \s_A} \neq 0.
      \intertext{To prove our result, we show that}
      (\ztol \uol) g \s_A= (\ztol \uol) g' \s_A \quad\text{and}\quad \langle (\tilde{\chi}, \psi_t), \tilde{\sigma}^g \rangle_{\ztol \uol \cap \s_A} =1.
 \end{gather}
We will consider the cases where 
$\ell$  is even and odd separately.
  \vspace{.2 cm}
  
\noindent {\bf $\ell$ is even:} Let $\ell = 2m$. In this case $\tilde{\sigma}=\widetilde{\psi_A}$. Assume $X\in \ztol \uol \cap \s_A.$ Then \autoref{eqn: wtc equalities for different cosets}  gives
\begin{gather*}
     (\tilde{\chi}, \psi_t)(X)=\widetilde{\psi_A}^g(X)  \,\,\mathrm{ and } \,  \,  (\tilde{\chi}, \psi_t)(X)=\widetilde{\psi_A}^{g'}(X),
\end{gather*}
for all $X \in \ztol \uol \cap \s_A$. The above equation can also be written as  ${  (\tilde{\chi}, \psi_t)}^g(X)=  {(\tilde{\chi}, \psi_t)}^{g'}(X)$ for all $X \in \ztol \uol \cap \s_A.$
By considering $X\in \ztol \uol\cap \s_A$ from \autoref{lem: ZtU intersection SA}, the following must be satisfied.  
\begin{gather*}
    \psi_t(\varpi^{t-f(t,i)}y(w-w'))=1 \,\, \mathrm{for \, all} \,\, y \in \cO_\ell. 
\end{gather*}
Therefore $w =w' \mod (\varpi^{f(t,i)})$.
   By \autoref{lem: wtc SNS-coset equality}, we have that $\ztol \uol g \s_A=\ztol \uol g'\s_A$.  Hence the claim is proved.
 \vspace{.2cm}  
  
\noindent {\bf $\mathbf{\ell}$ is odd:}  
By the same arguments as given for $\ell$ even, there exists at most one coset representative of $ \ztol \uol \backslash \gol /\s_A$, say $g$, such that $$\langle (\chi, \chi' , \psi_t), \tilde{\sigma}^g \rangle_{\ztol \uol \cap \s_A} \neq 0.$$ It remains to prove that $\langle (\chi, \chi' , \psi_t), \tilde{\sigma}^g \rangle_{\ztol \uol \cap \s_A} \neq 0$ implies  $\langle (\chi, \chi' , \psi_t), \tilde{\sigma}^g \rangle_{\ztol \uol \cap \s_A} = 1.$ 
We now consider the cases of $t \geq \lup$ and $t < \lup$ separately. We note that $(\ztol \uol)^g = \ztol \uol.$ Hence it is enough to prove that $\tilde{\sigma}|_{(\ztol \uol) \cap \s_A}$ is multiplicity free. 
\vspace{.2 cm}

{\bf $t \geq \lup$:}  Since $(\zol \uol)  \cap \s_A \subseteq (\ztol \uol) \cap \s_A,$ we obtain our result from \autoref{prop:multiplicity-free-SA-representation}. \vspace{.2 cm} 

{\bf $t < \lup$:} For proof of this case, we shall use the construction of $\sns$ representations as given in \autoref{sec: construction-SNS}. Consider the matrix $ \tilde{A} = \left(\begin{smallmatrix}
    \lambda-\lambda'+\varpi^i d/2  & 1\\
    \varpi^{2i}\frac{d^2}{4} &    \lambda-\lambda'+\varpi^i d/2
\end{smallmatrix}\right) \in \g(\cO_\lup)$. Let $A \in \g(\cO_{\ldown})$ be such that $\tilde{A}$ is a lift of $A$.  
 By \autoref{lem: non-regular constituents-W-type of the matrix A-wtc}(2), any $\sns$ sub-representation of $\wtc$ lies above $\psi_A$. 
Every $\sns$ representation $\rho$ of $\gol$ is of the form $\ind_{\s_A}^\gol \tilde{\sigma}$, where $\tilde{\sigma} \cong \ind_{\mathrm{N} \tC_{\gol}(\tilde{A})}^{\s_A}  (\psi_A{''})$ with $\psi_A{''}$ an extension of the character $\psi_A'$ as given in \autoref{sec: construction-SNS}. We denote  $ \tC_{\gol}(\tilde{A})$ by $C_A$ and $(\ztol \uol) \cap \s_A$ by $ \hta$. We now proceed to prove that $\mathrm{Res}^{\s_A}_{\hta}\tilde{\sigma}$ is an irreducible representation.  We have 
\[
\hta = \left\{ \begin{pmatrix} a & \varpi^{\ldown-j} b \\ 0 & a+ \varpi^{\ldown}d  \end{pmatrix}  \mid b, d \in \cO_\ell, a \in \cO_\ell^\times \right\}, 
\, \hta \cap \mathrm{N} C_A =  \left\{ \begin{pmatrix} a & \varpi^{\lup-j} b \\ 0 & a+ \varpi^{\ldown}d  \end{pmatrix}  \mid b, d \in \cO_\ell, a \in \cO_\ell^\times\right\}
\]
and $(\hta) \mathrm{N} C_A = \s_A$.  By Mackey's restriction formula,
\[
\mathrm{Res}_{\hta}^{\s_A} \ind_{\mathrm{N} C_A}^{\s_A}\psi_A{''} \cong  \ind_{(\mathrm{N}C_A) \cap \hta}^{\hta} \mathrm{Res}_{(\mathrm{N}C_A) \cap \hta}^{(\mathrm{N}C_A)}(\psi_A{''}).
\]
The result now follows via Clifford theory by observing that $\hta \cap (\mathrm{N}C_A)$ is a normal subgroup of $\hta$ and the stabilizer of $\psi_A{''}|_{\hta \cap (\mathrm{N}C_A)}$ in $\hta$ is $\hta \cap (\mathrm{N}C_A)$. This completes the proof for this case.
\end{proof}

\section{Decomposition of degenerate whittaker models of \texorpdfstring{$\GL_2(\co_\ell)$}{lg} for \texorpdfstring{$2 \leq \ell \leq 4$}{lg}}
\label{sec:examples}

In this section, we describe the decomposition of $\vtc$ into its irreducible constituents. For $t = \ell$, it was obtained by \cite{MR4356279}. We now consider $t \leq \ell-1$.   By \autoref{lem: non-regular constituents}, \autoref{cor:cuspidal-multiplicity-zero=vtc}, 
 \autoref{prop: ss in indzu-G} and 
 \autoref{cor: SNS in induction Z} and their proofs, the only part remaining towards the decomposition of $\vtc$ is to obtain the description of its $\sns$ constituents. We shall now proceed to obtain that. We will use \autoref{conditions with Kl2} in this section wherever needed. We will assume that $\ell \in \{2,3,4\}$  and $t \leq \ell-1$. 

Recall from \autoref{type of the matrix A},  $  \langle \ind_{\zol\uol}^\gol \chi\otimes\psi_t,\ind_{\s_A}^\gol \widetilde{\sigma_A} \rangle \neq 0$ for a $\sns$ matrix $A$ implies $A$ is conjugate to $\left(\begin{smallmatrix}
    \lambda/2 & b \\
    \varpi^{\ell-t} &  \lambda/2 
    \end{smallmatrix}\right)\in \M_2(\co_{\ldown})$.
\subsection{$\GL_2(\co_\ell)\, \mathrm{for}\, \ell=2$} 
For this case, since $t \leq \ell-1$, we shall assume that $A = \mat{\lambda/2}{1}{0}{\lambda/2 } \in \M_2(\cO_1).$ For $w\in \co_1,$ define a character  $\widetilde{\psi_A}^w$  of $\s_A$, such that $\widetilde{\psi_A}^w$ extends $\psi_A$, $\chi$  and satisfies $\widetilde{\psi_A}^w\mat{1}{x}{0}{1}=\psi(\varpi xw)$.  Let $\rho_w \in \mathrm{Irr}^{\mathrm{sns}}(\gol)$ be such that $\rho_w \cong \ind_{\s_A}^\gol(\widetilde{\psi_A}^w).$ We note that $\rho_w\ncong \rho_{w'} $ for  $w\neq w'$.
 \begin{lemma}
 \label{lem: o2 SNS complete decomposition}
 \begin{enumerate}

 \item For $w\in \co_1^\times$, we have $\langle V_\chi^1, \rho_w \rangle=1$. 
 \item   $\langle V_\chi^0, \rho_0 \rangle = q-1.$

 \end{enumerate} 

\end{lemma}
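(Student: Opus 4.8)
The plan is to read off both equalities from the Mackey decomposition of $\langle V^t_\chi,\rho\rangle$, which for $\ell=2$ collapses drastically. Lift $A=\mat{\lambda/2}{1}{0}{\lambda/2}$ to $\tilde A\in\M_2(\co_2)$. Since $\tilde A$ is a regular matrix, $\C_{\gol}(\tilde A)=\co_2 I+\co_2\tilde A=\{\mat{a}{b}{0}{a}\mid a\in\co_2^{\times},\ b\in\co_2\}$, and this is exactly $\zol\uol$. Hence $\zol\uol=\C_{\gol}(\tilde A)\subseteq\s_A=\C_{\gol}(\tilde A)\K^{1}$, so $\zol\uol\cap\s_A=\zol\uol$, and every diagonal matrix normalises $\zol\uol$. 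By \autoref{conditions with Kl2} and \autoref{lem:Z Kl2 double cosets}, the only classes in $\zol\uol\backslash\gol/\s_A$ that can contribute to $\langle V^t_\chi,\rho\rangle$ for a $\sns$ representation $\rho$ lying above $\psi_A$ are represented by the diagonal matrices $g_{w}=\mat{1}{0}{0}{w}$ with $w\in\co_1^{\times}$ (for $\ell=2$ the lower-triangular candidates $\mat{1}{0}{\varpi z}{w}$ all collapse onto these modulo $(\varpi^{\ldown})=(\varpi)$, and the anti-diagonal ones contribute nothing). For each such $g_{w}$ the associated Mackey term is the inner product of two characters of $\zol\uol$, hence is $0$ or $1$, and equals $1$ exactly when those characters agree on $\zol\uol$.

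For part (1), I would compute that for $g_{w'}=\mat{1}{0}{0}{w'}$ one has $(\chi\otimes\psi_1)^{g_{w'}}\mat{a}{b}{0}{a}=\chi(a)\psi(\varpi b a^{-1}(w')^{-1})$, while $\widetilde{\psi_A}^{w}\mat{a}{b}{0}{a}=\chi(a)\psi(\varpi b a^{-1}w)$. Because $\psi$ is primitive, these agree on $\zol\uol$ if and only if $(w')^{-1}\equiv w\bmod\varpi$, i.e. for exactly one coset representative $g_{w'}$, namely $w'=w^{-1}\in\co_1^{\times}$. Therefore the Mackey sum has exactly one nonzero term, equal to $1$, which gives $\langle V^1_\chi,\rho_w\rangle=1$. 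This is consistent with the bound $\langle V^1_\chi,\rho_w\rangle\le1$ coming from \autoref{cor: SNS in induction Z}(1), valid since $t=\ell-1$ and $\rho_w$ has central character $\chi$.

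For part (2), the crucial observation is that $\psi_0$ is trivial on $\uol$: indeed $\psi_0\mat{1}{u}{0}{1}=\psi(\varpi^{\ell}u)=\psi(\varpi^{2}u)=1$ in $\co_2$. Consequently $(\chi\otimes\psi_0)^{g_{w'}}\mat{a}{b}{0}{a}=\chi(a)$ for every $g_{w'}$, which coincides with $\widetilde{\psi_A}^{0}\mat{a}{b}{0}{a}=\chi(a)$; so all $q-1$ of the relevant cosets contribute $1$, giving $\langle V^0_\chi,\rho_0\rangle=q-1$. As a cross-check, $n_{\sns}(V^0_\chi)=q^{\ell-2}(q-1)=q-1$ by \autoref{coro:number-ss-sns-vtc}(2), while the Claim inside the proof of \autoref{cor: SNS in induction Z} (applied with $t=0<\ell-1$) bounds the number of distinct $\sns$ constituents of $V^0_\chi$ by $q^{\ell-2}=1$; these together force $\rho_0$ to be the unique $\sns$ constituent of $V^0_\chi$, occurring with multiplicity $q-1$.

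The step I expect to require the most care is the double-coset bookkeeping in the first paragraph: verifying that, for $\ell=2$, the representatives of $\zol\uol\backslash\gol/\s_A$ relevant to a constituent above $\psi_A$ are precisely the $q-1$ diagonal matrices $g_w$, and that on each of them the intertwiner reduces to comparing two characters of $\zol\uol$. Once this is in place — and it is essentially \autoref{conditions with Kl2} and \autoref{lem:Z Kl2 double cosets} specialised to $\ell=2$ — the two identities follow from the elementary character arithmetic above.
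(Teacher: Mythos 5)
Your proposal is correct and follows essentially the same route as the paper: a Mackey decomposition of $\langle V^t_\chi,\rho\rangle$ over the diagonal double-coset representatives $\mat{1}{0}{0}{w}$, $w\in\co_1^\times$ (with $\zol\uol\cap\s_A^g=\zol\uol$), followed by a direct comparison of the two characters of $\zol\uol$, yielding exactly one matching coset when $t=1$ and all $q-1$ cosets when $t=0$. The extra cross-checks via \autoref{cor: SNS in induction Z} and \autoref{coro:number-ss-sns-vtc} are consistent but not needed.
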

\begin{proof}
    Using Mackey's restriction formula, we get
    \[
    \langle \ind_{\zol\uol}^\gol 
    (\chi\otimes \psi_t) ,\ind_{\s_A}^\gol(\widetilde{\psi_A})\rangle=\oplus_{g \in \{\mat{1}{0}{0}{w} \mid w \in \co_1^\times\}} {\langle \chi\otimes \psi_t,\widetilde{\psi_A}^g\rangle}_{\zol\uol\cap \s_A^g}.
    \]
    Here $\zol\uol\cap  \s_A^g=\zol\uol $ for all $g\in \zol \uol \backslash \gol / \s_A$. 
    
    \noindent {\bf For $t=1$:} Let $g_{w}=\mat{1}{0}{0}{w^{-1}}$ for a fixed $w \in \co_1^\times$. By using the fact that $(\chi \otimes \psi_1)(X)=(\widetilde{\psi_A}^{w})^{g_{w}}(X)$  for all $X \in \zol \uol$ and the definition of $\rho_{w}$, we obtain $\langle V_\chi^1, \rho_{w} \rangle=1.$
    
    \noindent {\bf For $t=0$:} The definition of $\widetilde{\psi_A}$ gives us $\chi(X)=\widetilde{\psi_A}^g(X)$ for all $g \in \zol\uol \cap \s_A$ and for all $X \in \zol\uol$. Hence the claim.
\end{proof}

\subsection{$\GL_2(\co_\ell)\, \mathrm{for}\, \ell=3$}

For $0\leq k\leq \ell$ and $\alpha \in \cO_3$, we define a character $\psi_k^{\alpha}$ of $\uol$ as
\[
\psi_k^{\alpha}\begin{pmatrix}
    1 & x\\
    0 & 1
\end{pmatrix}=\psi_k({\alpha} x).\]

\begin{lemma}
\label{lem:sigma res to u final}
    Suppose $A=\left(\begin{smallmatrix}
          0 & 1\\ \varpi^j\beta & 0
      \end{smallmatrix}\right)\in \M_2(\co_3)$ and let $\tilde{\sigma}$ be as defined in \autoref{sec: construction-SNS}. Let $0\leq t \leq 1$ and assume $\uol\subseteq \s_A$ and $\tilde{\sigma}$ lies above $\psi_t^\alpha$ for some $\alpha \in \co_3.$ Then 
      \[\tilde{\sigma}|_{\uol}=
          \left(\underset{\varpi^2 z^2\in\co_3}{\oplus}\psi_\ell^{\varpi^2(\beta-z^2)}\right)\otimes \psi_t^\alpha.
      \] 
      
\end{lemma}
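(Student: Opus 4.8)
The plan is to compute $\tilde\sigma|_{\uol}$ via Mackey restriction along the chain $\mathrm{N}\tC_{\gol}(A) \subseteq \s_A$, exploiting the explicit description of the $\sns$ construction in \autoref{sec: construction-SNS}. Here $\ell = 3$, so $\ldown = 1$ and $\lup = 2$; the matrix $A = \left(\begin{smallmatrix} 0 & 1 \\ \varpi^j\beta & 0 \end{smallmatrix}\right)$ has $\alpha = 0$ in the notation of that subsection, $\psi_A$ is a character of $\K^2$, and $\tilde\sigma = \ind_{\mathrm{N}\tC_{\gol}(A)}^{\s_A}(\psi_A'')$ is $q$-dimensional. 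Since $\uol \subseteq \s_A$ by hypothesis, I would first write down $\uol \cap \mathrm{N}\tC_{\gol}(A)$ explicitly using the matrix forms of $\mathrm{N}$ and $\tC_{\gol}(A)$ given in \autoref{sec: construction-SNS}: an element $\left(\begin{smallmatrix} 1 & x \\ 0 & 1 \end{smallmatrix}\right)$ lies in $\mathrm{N}$ iff $x \in \varpi^{\lup - j}\cO_3 = \varpi^{2-j}\cO_3$, and (being upper triangular unipotent) it automatically normalizes into the relevant subgroup. Then Mackey's formula gives $\tilde\sigma|_{\uol} \cong \bigoplus_{g} \ind_{\uol \cap (\mathrm{N}\tC_{\gol}(A))^g}^{\uol}\big((\psi_A'')^g\big)$, summed over double cosets $(\mathrm{N}\tC_{\gov}(A))\backslash \s_A / \uol$.

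The key computational step is to identify the double coset representatives and to evaluate $(\psi_A'')^g$ on the unipotent group. I expect the double coset representatives to be conjugation by the lower-unipotent-type elements of $\tC_{\gol}(A)$ — concretely, elements whose lower-left entry runs over $\varpi^2 z^2$ as $z$ ranges over a transversal — which is exactly why the claimed answer is indexed by $\varpi^2 z^2 \in \cO_3$. Conjugating the character $\psi_A'$ (defined via $\psi(\varpi^{\lup} y + \varpi^{\lup}\beta z)$ in \autoref{sec: construction-SNS}) by such an element shifts $\beta$ to $\beta - z^2$, producing the twist $\psi_\ell^{\varpi^2(\beta - z^2)}$ on $\uol$; the overall factor $\psi_t^\alpha$ survives because $\tilde\sigma$ was assumed to lie above $\psi_t^\alpha$ and this character is fixed under the relevant conjugations (it comes from the central/torus part). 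A short dimension count — $|\{\varpi^2 z^2 : z \in \cO_3\}|$ summands, each one-dimensional on $\uol$, totalling $\dim\tilde\sigma = q$ — confirms there is no overcounting, provided the map $z \mapsto \varpi^2 z^2$ is handled carefully: since $\varpi^3 = 0$ in $\cO_3$, one has $\varpi^2 z^2$ depending only on $z \bmod \varpi$, and as $q$ is odd the squaring map on $\cO_3/\varpi \cong \mathbb{F}_q$ together with $0$ hits the right number of values counted with the right multiplicities to match $\dim\tilde\sigma$.

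The main obstacle will be bookkeeping the exact congruence conditions: making sure the subgroup $\uol \cap (\mathrm{N}\tC_{\gol}(A))^g$ is correctly computed for each $g$ (so that the induced pieces are genuinely one-dimensional characters of $\uol$ and not higher-dimensional), and verifying that conjugation by $\tC_{\gol}(A)$-elements indeed realizes the substitution $\beta \mapsto \beta - z^2$ with no stray powers of $\varpi$ or missing unit factors. This is a direct matrix computation: one conjugates $\left(\begin{smallmatrix} 1 & x \\ 0 & 1 \end{smallmatrix}\right)$ by $\left(\begin{smallmatrix} s & t \\ \varpi^j\beta t & s \end{smallmatrix}\right) \in \tC_{\gol}(A)$, reads off the resulting entry in $\K^2$, applies $\psi_A$, and simplifies modulo $\varpi^3$. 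Once the orbit of $\psi_A$ under this action is pinned down as $\{\psi_{(\beta - z^2)} : z\}$ (in the evident sense), the statement follows by assembling the Mackey pieces and tensoring with the fixed character $\psi_t^\alpha$.
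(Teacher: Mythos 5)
Your overall framework matches the paper's: write $\tilde\sigma=\ind_{\mathrm{N}\C_{\gol}(A)}^{\s_A}(\psi_A'')$, restrict to $\uol$ by Mackey, observe that in the two admissible cases ($(j,t)=(2,0)$ with arbitrary $\beta$, and $(j,t)=(1,1)$ with $\beta=0$) one has $(\mathrm{N}\C_{\gol}(A))^g\cap\uol=\uol$ so that each Mackey piece is a single character of $\uol$, and then identify those characters. However, your identification of the double coset representatives --- which is precisely the step that produces the quadratic twist --- is wrong. You propose to take representatives of $\uol\backslash\s_A/\mathrm{N}\C_{\gol}(A)$ from the centralizer and to compute $(\psi_A'')^g$ by conjugating $\left(\begin{smallmatrix}1&x\\0&1\end{smallmatrix}\right)$ by $g=\left(\begin{smallmatrix}s&t\\\varpi^j\beta t&s\end{smallmatrix}\right)\in\C_{\gol}(A)$. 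This cannot work: every such $g$ already lies in $\mathrm{N}\C_{\gol}(A)$, so all of them represent the single trivial double coset; and since $\psi_A''$ is a one-dimensional character of $\mathrm{N}\C_{\gol}(A)$, it is invariant under conjugation by its own elements, so $(\psi_A'')^g=\psi_A''$ and no shift $\beta\mapsto\beta-z^2$ ever appears. (Concretely, the lower-left entry of $g^{-1}\left(\begin{smallmatrix}1&x\\0&1\end{smallmatrix}\right)g$ is $-x\varpi^{2j}\beta^2t^2/\det(g)$, which vanishes in $\cO_3$ in both admissible cases.) Your Mackey sum would therefore collapse to one character repeated, contradicting both the claimed formula and the multiplicity-freeness of $\tilde\sigma|_{\uol}$.

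The correct representatives are the $q$ lower unipotent elements $g_z=\left(\begin{smallmatrix}1&0\\\varpi z&1\end{smallmatrix}\right)$ with $z\in\cO_1$; these lie in $\K^{\ldown}=\K^1\subseteq\s_A$ but, for $z\neq 0$, \emph{not} in $\mathrm{N}\C_{\gol}(A)$ (whose lower-left entries sit in $\varpi^{\lup}\cO_3=\varpi^2\cO_3$). Conjugation gives $g_z^{-1}\left(\begin{smallmatrix}1&x\\0&1\end{smallmatrix}\right)g_z=\left(\begin{smallmatrix}1+\varpi zx&x\\-\varpi^2z^2x&1-\varpi zx\end{smallmatrix}\right)$, and evaluating $\psi_A''$ on this element (after splitting off a unipotent factor when $j=1$, $\beta=0$) yields $\psi_\ell^{\varpi^2(\beta-z^2)}\otimes\psi_t^\alpha$. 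So the $z^2$ in the statement arises from conjugating by $\K^1$-elements transverse to $\mathrm{N}\C_{\gol}(A)$, not from the centralizer; the rest of your outline (the description of $\uol\cap\mathrm{N}$, the dimension count $q=\dim\tilde\sigma$, and the survival of $\psi_t^\alpha$) is fine once this is corrected.
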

\begin{proof}
    From the construction of $\sns$ representations in \autoref{sec: construction-SNS}, we get the following
    \begin{equation}\label{eqn:resrtiction of sigma o3}
  \tilde{\sigma} |_{\uol}=  \mathrm{Res}_{\uol}^{\s_A}\ind_{\mathrm{N} \C_{\gol}(A)}^{\s_A} \psi_A^{''}=\oplus_{g\in \uol\backslash \s_A/ \mathrm{N}\C_{\gol}(A)} \ind_{(\mathrm{N} \C_{\gol}(A))^g\cap \uol}^{\uol}\mathrm{Res}_{(\mathrm{N} \C_{\gol}(A))^g\cap \uol}^{(\mathrm{N}\C_{\gol}(A))^g} {\psi_A^{''}}^g.
\end{equation}
Note that $\uol\backslash \s_A/ \mathrm{N} \C_{\gol}(A)=\{\left(\begin{smallmatrix}
    1 & 0\\
    \varpi z & 1
\end{smallmatrix}\right), z\in \co_1\}$. By the given conditions we have $(\mathrm{N} \C_{\gol}(A))^g\cap \uol=\uol$ for all $g\in \uol\backslash \s_A/ \mathrm{N} \C_{\gol}(A)$. Then \autoref{eqn:resrtiction of sigma o3} becomes
\begin{equation}
\label{eqn:sigma res u final}
      \tilde{\sigma} |_{\uol}=\oplus_{g\in \{\left(\begin{smallmatrix}
    1 & 0\\
    \varpi z & 1
\end{smallmatrix}\right)_{z\in\co_1}\}}({\psi_A^{''}}^g)|_{\uol}.
\end{equation}
Assume $X=\left(\begin{smallmatrix}
          1 & x\\ 0 & 1
      \end{smallmatrix}\right)\in \uol$. Taking $g=\left(\begin{smallmatrix}
    1 & 0\\
    \varpi z & 1
\end{smallmatrix}\right)$ and using the definition of $\psi_A^{''}$ as given in \autoref{sec: construction-SNS}, we have
      \begin{equation}
      \label{eqn: psiA'' on U calcutaion}
          ({\psi_A^{''}}^g)(X)=\psi_A^{''}(g^{-1}Xg)=\psi_A^{''}\begin{pmatrix}
        1+\varpi zx & x\\
        -\varpi^2 z^2 x & 1-\varpi zx
    \end{pmatrix}.
      \end{equation}
By the given condition $\uol \subseteq \s_A$, we must have either $(j,t)=(2,0)$ with any $\beta$ or $(j,t)=(1,1)$ with $\beta=0.$ 

\noindent\textbf{(j=2)} In this case, $g^{-1}Xg\in N$ hence \autoref{eqn: psiA'' on U calcutaion} becomes
\[
({\psi_A^{''}}^g)(X)=\psi_\ell^{\varpi^2(\beta-z^2)}(X).
\]
\textbf{(j=1 and $\mathbf{\beta=0}$)} In this case, \autoref{eqn: psiA'' on U calcutaion} becomes
\[
({\psi_A^{''}}^g)(X)=\psi_A^{''}\begin{pmatrix}
        1+\varpi zx & -\varpi z x^2\\
        -\varpi^2z^2x & 1-\varpi zx+\varpi^2z^2x^2
    \end{pmatrix}\begin{pmatrix}
        1 & x\\
        0 & 1
    \end{pmatrix}=(\psi_\ell^{\varpi^2(-z^2)}\otimes \psi_1^\alpha)(X)=\psi_\ell^{\varpi^2(\alpha-z^2)}(X).
\]

Substituting the values of $(\psi_A^{''})^g$ in \autoref{eqn:sigma res u final}, we get the desired result.
\end{proof}

Suppose, $\tilde{\sigma}|_{\zol\uol}=\chi\otimes(\underset{\varpi^2z^2\in \co_3}{\oplus}\psi_\ell^{\varpi^2(\alpha-z^2)})$ for some $\alpha\in \co_3,$ then we denote $\tilde{\sigma}$ by $\tilde{\sigma}_\alpha$ and $\rho_\alpha:=\ind_{\s_A}^\gol \tilde{\sigma_\alpha}$. Recall that $\rho_\alpha$ is an irreducible $\sns$ representation of $\gol.$
\begin{lemma}
\label{lem:t=0 o3 sns}
The multiplicities of $\rho_\beta$ in  $\ind_{\zol\uol}^{\gol}(\chi\otimes \psi_t)$ for $t=0$ are as follows:
  \begin{center}
\begin{tabular}{||c c c c||} 
 \hline 
 $\beta$ & 0 & non-zero square & non-zero non-square \\ [0.5ex] 
 \hline
 no. of distinct $\rho_{\beta}$ & $ 1 $ & $ \frac{q-1}{2} $ & $ 0 $ \\ 
 \hline
$\langle V_\chi^0,\rho_\beta \rangle$ & $q-1$ & $2(q-1)$ & $0$ \\
 \hline

\end{tabular}
\end{center}
  
\end{lemma}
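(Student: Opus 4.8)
The plan is to compute, for each $\bar\beta\in\mathbb F_q$, the multiplicity $\langle V^0_\chi,\rho_\beta\rangle$ and then to read off which $\rho_\beta$ actually occur. Note first that for $t=0$ the character $\psi_0$ is trivial on $\uol$, so $V^0_\chi\cong\ind_{\zol\uol}^\gol\chi$. By \autoref{type of the matrix A} and the construction of \autoref{sec: construction-SNS}, every $\sns$ constituent of $V^0_\chi$ is of the form $\rho_\beta=\ind_{\s_A}^\gol\tilde\sigma_\beta$ with $A=\mat{0}{1}{\varpi^2\beta}{0}\in\M_2(\co_3)$ (possibly after a twist by a character of the form \autoref{eq:natureal-extension}), and by \autoref{lem:sigma res to u final} (with $t=0$, where $\psi_0^{\alpha}$ is trivial) one has $\tilde\sigma_\beta|_{\zol\uol}=\chi\otimes\bigoplus_{\bar z\in\mathbb F_q}\psi_\ell^{\varpi^2(\beta-z^2)}$. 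In particular the $\gol$-conjugacy class of $A$ — equivalently $\det A=-\varpi^2\beta$ — is an invariant of $\rho_\beta$, so $\rho_\beta\cong\rho_{\beta'}$ if and only if $\bar\beta=\bar\beta'$.

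For the multiplicity I would use Frobenius reciprocity together with Mackey's formula,
\[
\langle V^0_\chi,\rho_\beta\rangle = {\big\langle\ind_{\zol\uol}^\gol(\chi\otimes\psi_0),\ind_{\s_A}^\gol\tilde\sigma_\beta\big\rangle}_\gol = \bigoplus_{g\in\zol\uol\backslash\gol/\s_A}{\big\langle(\chi\otimes\psi_0)^g,\tilde\sigma_\beta\big\rangle}_{(\zol\uol)^g\cap\s_A}.
\]
Here \autoref{conditions with Kl2} (applied on $\zol\uol\cap\K^\lup$) forces every contributing $g$ to have the form $\mat{1}{0}{\varpi z}{w}$; since $\uol\subseteq\s_A$ we have $\zol\uol\subseteq\s_A$ and $\mat{1}{0}{\varpi z}{1}\in\s_A$, so such a coset reduces modulo $\s_A$ to $g_w=\mat{1}{0}{0}{w}$ with $\bar w\in\mathbb F_q^\times$, and $g_w$ normalizes $\s_A$, whence $(\zol\uol)^{g_w}\cap\s_A=\zol\uol$. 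Conjugation by $g_w$ sends $\mat{1}{x}{0}{1}\mapsto\mat{1}{xw}{0}{1}$, leaving $(\chi\otimes\psi_0)^{g_w}=\chi\otimes\psi_0$ and giving ${}^{g_w}\tilde\sigma_\beta|_{\zol\uol}=\chi\otimes\bigoplus_{\bar z}\psi_\ell^{\varpi^2 w(\beta-z^2)}$; the summand $\psi_\ell^{\varpi^2 w(\beta-z^2)}$ restricts trivially to $\uol$ exactly when $\varpi^2 w(\beta-z^2)=0$ in $\co_3$, that is (as $w$ is a unit and $\varpi^3=0$) when $\bar z^2=\bar\beta$ in $\mathbb F_q$. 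Hence each of the $q-1$ cosets $g_w$ contributes $\#\{\bar z\in\mathbb F_q:\bar z^2=\bar\beta\}$, so
\[
\langle V^0_\chi,\rho_\beta\rangle=(q-1)\cdot\#\{\bar z\in\mathbb F_q:\bar z^2=\bar\beta\},
\]
which is $q-1$ for $\bar\beta=0$, equals $2(q-1)$ for $\bar\beta$ a nonzero square, and is $0$ for $\bar\beta$ a nonzero non-square.

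Reading off the table is then immediate: $\rho_\beta$ is a constituent of $V^0_\chi$ precisely when $\bar\beta$ is a square, there being a single $\rho_\beta$ for $\bar\beta=0$ and $\tfrac{q-1}{2}$ for the nonzero squares, and none for non-squares. As a consistency check these contribute $1\cdot(q-1)+\tfrac{q-1}{2}\cdot 2(q-1)=q(q-1)=n_{\sns}(V^0_\chi)$ by \autoref{coro:number-ss-sns-vtc}, so the list is complete. I expect the delicate part to be the double-coset bookkeeping in the Mackey step — establishing that only the diagonal cosets $g_w$ survive and that $(\zol\uol)^{g_w}\cap\s_A=\zol\uol$ — together with the claim that $\rho_\beta$ is determined by $\bar\beta$; the comparison with \autoref{coro:number-ss-sns-vtc} is what ultimately certifies the multiplicities and counts recorded in the table.
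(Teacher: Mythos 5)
Your proposal is correct and follows essentially the same route as the paper: a Mackey decomposition of $\langle V^0_\chi,\rho_\beta\rangle$ over the $q-1$ double cosets $g_w=\left(\begin{smallmatrix}1&0\\0&w\end{smallmatrix}\right)$ in $\zol\uol\backslash\gol/\s_A$, combined with the restriction formula $\tilde\sigma_\beta|_{\zol\uol}=\chi\otimes\bigoplus_{z}\psi_\ell^{\varpi^2(\beta-z^2)}$ from \autoref{lem:sigma res to u final}, reducing everything to counting square roots of $\bar\beta$ in $\mathbb F_q$. Your added determinant argument for distinguishing the $\rho_\beta$ and the closing count against \autoref{coro:number-ss-sns-vtc} match the checks the paper performs at the end of the section.
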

\begin{proof}

   Let $A=\left(\begin{smallmatrix}
    \lambda/2 & 1 \\
    \varpi^2 \beta & \lambda/2
    \end{smallmatrix}\right) \in \M_2(\co_3)$  
    and $\tilde{\sigma}$ be as defined above. Suppose $\langle V_\chi^0, \ind_{\s_A}^{\gol}\tilde{\sigma}\rangle\neq 0$. Using definition of $V_\chi^0$ and \autoref{lem:split-semisimple from borel induction} we get
    \[
    \langle\ind_{\zol \uol}^{\gol}(\chi),\ind_{\s_A}^{\gol}\tilde{\sigma}\rangle={\langle (q-1)\chi,\oplus_{\varpi^2z^2\in \co_3}(\chi\otimes\psi^{\varpi^2({\beta-z^2})})\rangle}_{\zol\uol}.
    \]
    Here we have also used that $\zol\uol \backslash \gol /\s_A=\{\left(\begin{smallmatrix}
    1&0\\0&w
\end{smallmatrix}\right)\,|\, w\in \co_1^\times\}.$
We now explore different choices of $\beta\in \co_3$ such that $\varpi^2(\beta-z^2)=0$ or $\beta=z^2 \mod ({\varpi}).$ For $\beta=0$, the equation $-z^2=0$ has only one solution which is $z=0$, hence the intertwiner $\langle V_\chi^0,\rho_\beta \rangle$ is $q-1$.
For $\beta=k^2$ for some $k\neq 0$, $\beta-z^2=0$ has two solutions, which are $z= \pm k$. Hence the value of the intertwiner is $2(q-1)$. The number of distinct $\sns$ constituents in each case is determined by the possible values of $\beta$.

\end{proof}

\begin{lemma}
\label{lem:t=1 o3 sns}
    For $t=1$, the multiplicities of $\sns$ constituents in $\ind_{\zol\uol}^{\gol}(\chi\otimes \psi_t)$ are as follows:
    \begin{center}
\begin{tabular}{||c c c c||} 
 \hline
 $\alpha$ & 0 & non-zero square & non-zero non-square \\ [0.5ex] 
 \hline
no. of distinct $\rho_\alpha$  & $ 1 $ & $ \frac{q-1}{2} $ & $ \frac{q-1}{2} $ \\ 
 \hline
$\langle V_\chi^1,\rho_\alpha \rangle$ & $q-1$ & $q-2$ & $q$ \\
 \hline

\end{tabular}
\end{center} 

\end{lemma}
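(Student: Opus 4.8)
The plan is to run the argument of the proof of \autoref{lem:t=0 o3 sns}, now with the character $\chi\otimes\psi_1$ of $\zol\uol$ replacing $\chi$. By \autoref{type of the matrix A} every $\sns$ constituent of $V_\chi^1$ lies above the character $\psi_A$ of $\K^{\lup}$ with $A=\left(\begin{smallmatrix}\lambda/2 & 1\\ 0 & \lambda/2\end{smallmatrix}\right)\in\M_2(\co_1)$ (note $\varpi^{\ell-t}=0$ in $\co_1$ since $\ell=3$, $t=1$), and for the construction of \autoref{sec: construction-SNS} one takes the lift of $A$ to $\M_2(\co_3)$ with $\uol\subseteq\s_A$, which by the computation in the proof of \autoref{lem:sigma res to u final} is exactly the case $(j,t)=(1,1)$, $\beta=0$. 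In that case \autoref{lem:sigma res to u final} gives, for any $\tilde\sigma\in\mathrm{Irr}(\s_A\mid\psi_A)$ lying above $\chi$ and above $\psi_1^\alpha$, that $\tilde\sigma|_{\zol\uol}=\chi\otimes\bigl(\bigoplus_{\varpi^2z^2\in\co_3}\psi_\ell^{\varpi^2(\alpha-z^2)}\bigr)$; thus $\tilde\sigma=\tilde\sigma_\alpha$ and the associated $\sns$ representation is $\rho_\alpha=\ind_{\s_A}^{\gol}\tilde\sigma_\alpha$. By the construction recalled in \autoref{O.construction} the parameter $\alpha$ runs over $\co_1$ and distinct $\alpha$ give non-isomorphic $\rho_\alpha$, so the number of distinct $\rho_\alpha$ in the three columns is $1$, $\tfrac{q-1}{2}$, $\tfrac{q-1}{2}$ as claimed.

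First, I would reduce the double cosets entering $\langle V_\chi^1,\rho_\alpha\rangle$. Combining \autoref{conditions with Kl2} with \autoref{lem:Z Kl2 double cosets}(2) (applicable since $\lup=2\ge t=1$), the only representatives $g$ of $\zol\uol\backslash\gol/\s_A$ that can give a non-zero contribution are $g_w=\left(\begin{smallmatrix}1 & 0\\ 0 & w\end{smallmatrix}\right)$ with $w\in\co_1^\times$, and since $g_w$ is diagonal and $\uol\subseteq\s_A$ one has $(\zol\uol)^{g_w}\cap\s_A=\zol\uol$. Mackey's restriction formula then gives
\[
\langle V_\chi^1,\rho_\alpha\rangle=\sum_{w\in\co_1^\times}\langle(\chi\otimes\psi_1)^{g_w},\tilde\sigma_\alpha\rangle_{\zol\uol}.
\]
A direct conjugation shows $(\chi\otimes\psi_1)^{g_w}=\chi\otimes\psi_1^{w}$ on $\zol\uol$, so the $\zol$-parts match automatically; on $\uol$, using that $\psi$ is primitive on $\co_3$ so that $\psi(\varpi^2(\,\cdot\,))$ has kernel $\varpi\co_3$, the character $\psi_1^{w}$ equals the summand $\psi_\ell^{\varpi^2(\alpha-z^2)}$ of $\tilde\sigma_\alpha|_{\uol}$ precisely when $w\equiv\alpha-z^2\pmod\varpi$. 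Hence
\[
\langle V_\chi^1,\rho_\alpha\rangle=\sum_{w\in\co_1^\times}\#\{z\in\co_1:z^2=\alpha-w\}=\#\{z\in\mathbb{F}_q:z^2\ne\alpha\}.
\]

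Finally, I would evaluate this cardinality: it is $q-1$ if $\alpha=0$, it is $q-2$ if $\alpha$ is a non-zero square, and it is $q$ if $\alpha$ is a non-zero non-square, which is exactly the last row of the table. As a check, the total number of $\sns$ constituents counted with multiplicity is $1\cdot(q-1)+\tfrac{q-1}{2}(q-2)+\tfrac{q-1}{2}q=(q-1)q$, matching $n_{\sns}(V_\chi^1)=(q-1)q^{\ell-2}$ from \autoref{coro:number-ss-sns-vtc}. The only step I expect to need genuine care is the injectivity of $\alpha\mapsto\rho_\alpha$: besides the count of irreducibles above $\psi_A$ and $\chi$ from \autoref{O.construction}, one can argue directly that in the multiset $\{\alpha-z^2:z\in\mathbb{F}_q\}$ the element $\alpha$ (the image of $z=0$) is the unique one of odd multiplicity, so $\tilde\sigma_\alpha|_{\zol\uol}$ already recovers $\alpha$; everything else is the same bookkeeping as in \autoref{lem:t=0 o3 sns}.
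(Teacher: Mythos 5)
Your proposal is correct and follows essentially the same route as the paper: reduce the double cosets to $\mathrm{diag}(1,w)$, $w\in\co_1^\times$, via \autoref{conditions with Kl2} and \autoref{lem:Z Kl2 double cosets}, apply \autoref{lem:sigma res to u final} to identify $\tilde\sigma_\alpha|_{\zol\uol}$, and count solutions of $w\equiv\alpha-z^2\pmod\varpi$ with $w\neq 0$, which is exactly the paper's computation of $\#\{z:z^2\neq\alpha\}$ in the three cases. Your closing consistency check against $n_{\sns}(V_\chi^1)=q(q-1)$ and the direct argument for the injectivity of $\alpha\mapsto\rho_\alpha$ are minor additions, not departures.
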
 
\begin{proof}
     Let  $A=\left(\begin{smallmatrix}
    \lambda/2 & 1 \\
    0 & \lambda/2
    \end{smallmatrix}\right) \in \M_2(\co_3)$ and $\tilde{\sigma}$ be as defined above. 
Assume $  
\langle\ind_{\zol\uol}^{\gol}(\chi\otimes \psi_t),\ind_{\s_A}^{\gol}\tilde{\sigma}\rangle\neq 0
$. By \autoref{lem:sigma res to u final} we have,

\begin{align*}
     \langle \ind_{\zol\uol}^{\gol}(\chi\otimes \psi_1),\ind_{\s_A}^{\gol}\tilde{\sigma}\rangle &=\oplus_{ j \in \co_1^\times}{\langle \chi\otimes\psi_1^j,\oplus_{\varpi^2z^2\in \co_3}(\chi\otimes\psi_\ell^{\varpi^2(\alpha-z^2)})\rangle}_{\zol\uol} .   
\end{align*}
The above equality also uses the fact that $\psi_1^g\neq \psi_1^{g'}$ for any $g,g'\in \zol\uol\backslash {\gol}/\s_A$ such that $\zol\uol g \s_A\neq \zol\uol g'\s_A.$

To compute the above intertwiner, we wish to solve the equation $\varpi^2j=(\alpha-z^2)$ for $z\in \co_3$ or $j=\varpi^2(\alpha-z^2) \mod (\varpi).$ We proceed by considering various possibilities of $\alpha$.
\begin{description}
    \item[$\pmb{\alpha=0}$] In this case, the equation $j=-z^2$ has $2$ solutions  if and only if $j$ is a negative of a square in $\co_1^\times$. Also, $
|(\co_1^\times)^2|=\sfrac{(q-1)}{2}$ so, $\langle\ind_{\zol\uol}^{\gol}(\chi\otimes \psi_1),\ind_{\s_A}^{\gol}\tilde{\sigma}_\alpha\rangle=\sfrac{2(q-1)}{2}=q-1 $.

    \item[$\pmb{\alpha=k^2,\ k\neq 0}$]  For $z\in \co_1$, the equation $j=k^2-z^2$  has 2 non-zero solutions if $k^2-j$ is a square with $j \neq k^2$. This gives total $2(\sfrac{(q-1)}{2}-1)=q-3$ solutions. Also $z=0$ is always a solution. Hence the number of solutions for $z\in \co_1$ is  $1+(q-3)=q-2$. 
    
    \item[$\pmb{\alpha \neq k^2}$ for any $\pmb{k}$] In this case there are $2(q-1)/2$ solutions to the equation $j=\alpha-z^2$ for $z\in\co_1^\times$ and again for $z=0$, $\alpha=j$  is always a solution. Hence the value of the intertwiner is $1+2(q-1)/2=q$.
\end{description}
The number of such representations $\ind_{\s_A}^{\gol}\tilde{\sigma}$ is just the number of possible $\alpha\in \co_1$ in each of the above cases.
\end{proof}

\begin{lemma}
\label{lem:t=2 o3 sns}
   For $t=2$, suppose $A=\left(\begin{smallmatrix}
    \lambda/2 & 1 \\
    0 & \lambda/2
    \end{smallmatrix}\right) \in \M_2(\co_1)$. Let $\rho\in \mathrm{Irr}^\sns(\gol)$ such that $\rho=\ind_{\s_A}^\gol \tilde{\sigma}$.
    Suppose $\tilde{\sigma}|_{U(\varpi \co_\ell)}$ is non-trivial $q$-dimensional representation. Then $\langle V_\chi^2,\rho\rangle=1.$ The number of such distinct $\sns$ representations in $V_\chi^2$ is $q(q-1).$
\end{lemma}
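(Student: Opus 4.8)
The plan is to bootstrap from the multiplicity bound already available when $t=\ell-1$, and combine it with a Mackey computation and the numerical information on $\sns$-constituents of $V^2_\chi$. Since $\ell=3$ here we have $t=2=\ell-1$, so \autoref{cor: SNS in induction Z}(1) gives $\langle V^2,\rho\rangle_{\gol}\le 1$ for every $\rho\in\mathrm{Irr}^{\sns}(\gol)$; as $V^2_\chi$ is a direct summand of $V^2$, also $\langle V^2_\chi,\rho\rangle_{\gol}\le 1$. For $\langle V^2_\chi,\rho\rangle_{\gol}\neq 0$ we must have $\tilde\sigma|_{\zol}=\chi$, so we assume this. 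It thus suffices to show that the given $\rho$ actually occurs in $V^2_\chi$, and then to count how many distinct such $\sns$ representations occur.

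For the occurrence I would run Mackey's restriction formula as in Case~1 of the proof of \autoref{cor: SNS in induction Z} ($\ell=3$ being odd):
\[
\langle V^2_\chi,\rho\rangle_{\gol}=\oplus_{g\in\zol\uol\backslash\gol/\s_A}\langle(\chi\otimes\psi_2)^g,\tilde\sigma\rangle_{(\zol\uol)^g\cap\s_A}.
\]
By \autoref{conditions with Kl2} only double cosets with a representative of the form $\left(\begin{smallmatrix}1&0\\\varpi z& v\end{smallmatrix}\right)$ contribute, and (as in Case~1 of \autoref{cor: SNS in induction Z}, using \autoref{lem:Z Kl2 double cosets}) for $t=\ell-1$ these reduce to $g_w=\left(\begin{smallmatrix}1&0\\0&w\end{smallmatrix}\right)$, $w\in\co_\ell^\times$, for which $(\zol\uol)^{g_w}\cap\s_A=\zol\uol$. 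On $\uol$ the character $(\chi\otimes\psi_2)^{g_w}$ sends $\left(\begin{smallmatrix}1&x\\0&1\end{smallmatrix}\right)$ to $\psi(\varpi w^{-1}x)$, which is non-trivial on $U(\varpi\co_\ell)$ (as $\psi$ is primitive) but trivial on $U(\varpi^2\co_\ell)$. On the other side I would compute $\tilde\sigma|_{\zol\uol}$ from the explicit construction of $\sns$ representations recalled in \autoref{sec: construction-SNS}, exactly in the style of \autoref{lem:sigma res to u final}: $\tilde\sigma|_{\uol}$ is a sum of $q$ characters $\left(\begin{smallmatrix}1&x\\0&1\end{smallmatrix}\right)\mapsto\psi(\varpi\, c(z)\, x)$ indexed by $z\in\co_1$, and $\tilde\sigma|_{\zol}=\chi$. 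The hypothesis that $\tilde\sigma|_{U(\varpi\co_\ell)}$ is a non-trivial $q$-dimensional representation forces these characters to be non-trivial on $U(\varpi\co_\ell)$, hence of exactly the same level as $(\chi\otimes\psi_2)^{g_w}|_{\uol}$; matching $c(z)\equiv w^{-1}$ to the relevant precision then has a solution (pick $w^{-1}$ with the right leading term and $z=0$), so $\langle V^2_\chi,\rho\rangle_{\gol}\ge 1$, hence $=1$ by the upper bound. Conversely, the same matching shows that if $\tilde\sigma|_{U(\varpi\co_\ell)}$ were trivial then no coset $g_w$ could contribute, and since the $g_w$ are the only possibly contributing cosets, every $\sns$-constituent of $V^2_\chi$ is necessarily of the stated form.

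For the count, by \autoref{coro:number-ss-sns-vtc}(2) the number of $\sns$-constituents of $V^2_\chi$, counted with multiplicity, is $n_{\sns}(V^2_\chi)=q^{\ell-2}(q-1)=q(q-1)$; since every such constituent occurs with multiplicity exactly one (shown above) and, by the last sentence of the previous paragraph, is of the form $\ind_{\s_A}^{\gol}\tilde\sigma$ with $\tilde\sigma|_{U(\varpi\co_\ell)}$ non-trivial, the number of distinct such $\rho$ is exactly $q(q-1)$. The main obstacle is the middle step: extracting $\tilde\sigma|_{\zol\uol}$ with enough precision from the construction in \autoref{sec: construction-SNS} to see that the ``non-trivial $q$-dimensional'' condition is precisely the condition under which a double coset $g_w$ contributes, and that this contribution equals $1$. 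This is the $t=\ell-1$ analogue of \autoref{lem:t=1 o3 sns}, but because the degeneracy level is now adjacent to the non-degenerate one, the quadratic-residue counting that produced the values $q-2,\,q$ there collapses to a single solution, so the multiplicities become uniformly one and the number of distinct constituents is maximal.
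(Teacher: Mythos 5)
Your proposal is correct and follows essentially the same route as the paper: the upper bound $\langle V^2_\chi,\rho\rangle\le 1$ comes from the $t=\ell-1$ case of \autoref{cor: SNS in induction Z}, the lower bound from a Mackey computation over the cosets $g_w=\left(\begin{smallmatrix}1&0\\0&w\end{smallmatrix}\right)$ using the non-triviality of $\tilde\sigma|_{\U(\varpi\co_\ell)}$, and the count from $n_{\sns}(V^2_\chi)=q(q-1)$ in \autoref{coro:number-ss-sns-vtc}. The only (harmless) difference is that you finish the lower bound by explicitly decomposing $\tilde\sigma|_{\uol}$ into $q$ characters and matching levels, whereas the paper first matches on $\zol\U(\varpi\co_\ell)$ and then lifts to $\zol\uol$ via \autoref{prop:multiplicity-free-SA-representation} and the index $[\zol\uol:\zol\U(\varpi\co_\ell)]=q$.
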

\begin{proof}
By \autoref{cor: SNS in induction Z}, it is enough to show that 
$\langle V_\chi^2,\rho\rangle\neq 0.$ 
\[{\langle V_\chi^2, \rho\rangle}_{\zol\uol}=\oplus_{g\in\{\left(\begin{smallmatrix}
    1&0\\0&w
\end{smallmatrix}\right),{w\in\co_1^\times}\}}\langle\chi\otimes\psi_2, \tilde{\sigma}^g\rangle_{\zol\uol}
\]
We note that $\chi \otimes \psi_2$ is non-trivial on $\U(\varpi \cO_\ell).$ By the the construction of $\tilde{\sigma}$ and the given hypothesis, there exists $g = \begin{psmallmatrix}
    1 & 0 \\ 0 & w
\end{psmallmatrix} \in \gol$ such that $\langle \chi \otimes \psi_2, \tilde{\sigma}^g \rangle_{\zol \U(\varpi \cO_\ell)} \neq 0.$ By \autoref{prop:multiplicity-free-SA-representation}, $\tilde{\sigma}^g|_{\zol \uol}$ is a multiplicity free representation of dimension $q$.  Since $|\frac{\zol \uol}{\zol \U(\varpi \cO_\ell)}| = q, $ we must have $\langle \chi \otimes \psi_2, \tilde{\sigma}^g   \rangle_{\zol \uol} \neq 0.$
Therefore $\langle V_\chi^2,\rho\rangle\neq 0.$ 
\end{proof}

\subsection{$\GL_2(\co_\ell)\, \mathrm{for}\, \ell=4$}
\begin{lemma}
\label{lem:t=3 o4 sns}
    Let $A=\mat{\lambda/2}{b}{\varpi}{\lambda/2}$. Let $\tilde{\psi_A}^w$ be an extension of $\psi_A$ from $\K^2$ to $\s_A$ satisfying $\tilde{\psi_A}^w \mat{1}{\varpi x}{0}{1}=\psi(\varpi^2 x +\varpi^3 w x)$ for some $w\in \co_1$ and $\tilde{\psi_A}^w |_{\zol}=\chi |_{\zol}$. The following hold for $\rho^w \cong \ind_{\s_A}^{\gol}\tilde{\psi_A}^w$. 
    \begin{enumerate}
        \item  $\langle V_\chi^3, \rho^w\rangle=1$.
        \item The total number of $\rho^w$ such that $\rho^w \ncong \rho^{w'}$ and $\langle V_{\chi}^3,\rho^w\rangle=1$ is $q^2(q-1)$.
    \end{enumerate}

\end{lemma}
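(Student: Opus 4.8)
The plan is to run Mackey's intertwining-number formula against the known structure of $\sns$ representations in the even case $\ell=4$. Here $t=3=\ell-1$ and $\lup=\ldown=2$, so $\K^2$ is abelian, $A=\mat{\lambda/2}{b}{\varpi}{\lambda/2}\in\M_2(\co_2)$ is $\sns$ (and must be regular, so $b\in\co_2^\times$), and by \autoref{sec:E.construction} every $\sns$ representation lying above $\psi_A$ is $\ind_{\s_A}^{\gol}$ of a character extension of $\psi_A$ to $\s_A=\C_{\gol}(\tilde A)\K^2$, where $\C_{\gol}(\tilde A)=\{\mat{p}{q}{\varpi\tilde b^{-1}q}{p}\}$. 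The key structural input I would record first is that $\s_A\cap\zol\uol=\zol\U(\varpi\co_4)$ and that this subgroup is preserved by conjugation by the diagonal cosets $g_v=\mat{1}{0}{0}{v}$; both are direct computations.

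For part (1), I would write $\langle V_\chi^3,\rho^w\rangle=\bigoplus_{g}\langle(\chi\otimes\psi_3)^g,\tilde\psi_A^w\rangle_{(\zol\uol)^g\cap\s_A}$ over $g\in\zol\uol\backslash\gol/\s_A$. By \autoref{conditions with Kl2} the only cosets that can contribute have the form $g=\mat{1}{0}{\varpi z}{w'}$, and by \autoref{lem:Z Kl2 double cosets}(3) (we are in the case $\lup<t$) each of these reduces to a diagonal representative $g_v$. On $\zol\U(\varpi\co_4)$ one evaluates $(\chi\otimes\psi_3)^{g_v}\mat{x}{\varpi z'}{0}{x}=\chi(x)\psi(\varpi^2 z'x^{-1}v^{-1})$ and $\tilde\psi_A^w\mat{x}{\varpi z'}{0}{x}=\chi(x)\psi(\varpi^2 z'x^{-1}(1+\varpi w))$; these two characters coincide exactly when $v^{-1}\equiv 1+\varpi w\pmod{\varpi^2}$, which singles out one double coset. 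Hence the intertwiner sum has a single nonzero term, equal to $1$; combined with the bound $\langle V^t,\rho\rangle\le 1$ of \autoref{cor: SNS in induction Z}(1) this gives $\langle V_\chi^3,\rho^w\rangle=1$ (the computation already forces equality on its own).

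For part (2), I would first observe that by \autoref{type of the matrix A} every $\sns$ constituent of $V_\chi^3$ lies above some $\psi_A$ with $A$ conjugate to $\mat{\lambda/2}{b}{\varpi}{\lambda/2}$, and by \autoref{sec:E.construction} it equals $\ind_{\s_A}^{\gol}$ of an extension of $\psi_A$ with central character $\chi$, i.e.\ is one of the $\rho^w$. Since part (1) shows each such constituent occurs with multiplicity exactly one, the number of pairwise non-isomorphic $\rho^w$ occurring equals $n_{\sns}(V_\chi^3)$, which by \autoref{coro:number-ss-sns-vtc} is $q^{\ell-2}(q-1)=q^2(q-1)$. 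The same count also drops out of direct bookkeeping: there are $q-1$ conjugacy classes of such $A$, indexed by $\det A=\lambda^2/4-\varpi b$, hence by $b\bmod\varpi$, and for each there are $q^2$ characters of $\s_A$ extending $\psi_A$ and restricting to $\chi$ on $\zol$, no two of which induce isomorphic representations since $\s_A/\K^2$ is abelian.

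The main obstacle will be the bookkeeping with congruence levels: pinning down $\s_A\cap\zol\uol=\zol\U(\varpi\co_4)$, checking that conjugation by $g_v$ preserves it, and then matching the two characters modulo $\varpi^2$ and $\varpi^3$ with $b$ a unit. A secondary point needing care is reconciling the informal index $w\in\co_1$ (which only records the value of an extension on $\U(\varpi\co_4)$, and hence does not single out a unique extension) with the total $q^2(q-1)$; this is cleanly sidestepped by comparing with $n_{\sns}(V_\chi^3)$ rather than enumerating extensions by hand.
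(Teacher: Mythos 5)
Your proposal is correct and follows essentially the same route as the paper: Mackey's formula with the double cosets reduced to diagonal representatives, matching $(\chi\otimes\psi_3)^{g_v}$ against $\tilde{\psi_A}^w$ on $\zol\U(\varpi\co_4)$ to isolate a single contributing coset, and then counting $q-1$ conjugacy classes of $A$ times $q^2$ admissible extensions. The only (harmless) addition is your cross-check of the total against $n_{\sns}(V_\chi^3)$ from \autoref{coro:number-ss-sns-vtc}, which the paper also performs, but only after treating all cases.
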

\begin{proof}
    By Mackey's restriction formula, 
    \[
    \langle V_\chi^3, \rho \rangle= \oplus _{g\in \{\mat{1}{0}{0}{w},w\in \co_2^{\times}\}}   {\langle \chi \otimes \psi_3^g, \tilde{\psi_A}^w  \rangle}_{\zol \U(\varpi \co_4)}.
    \]
    By definition of $\tilde{\psi_A}^w$, $\chi \otimes \psi_3^g=\tilde{\psi_A}^w  $  on $\zol \U(\varpi \co_4)$ if and only if $g=\mat{1}{0}{0}{1+\varpi w}$ . This proves the first part.
    Now the number of non-conjugate matrices of the form $A$ is $q-1$. There are $|\co_1| \times \frac{|\s_A|}{|\zol \U(\varpi \co_4) \K^2|}=q^2$ number of $\tilde{\psi_A}^w $ such that $\langle V_\chi^3 , \ind_{\s_A}^\gol \tilde{\psi_A}^w \rangle=1$.
    Hence the number of $\rho$ such that $\langle V_\chi^3,\rho\rangle=1$ is $q^2(q-1)$.  
\end{proof}

Suppose $A=\left(\begin{smallmatrix}
    \lambda/2 & 1\\
    0 & \lambda/2
\end{smallmatrix}\right) \in \M_2(\co_2)$. 
For $\gamma\in \co_1$, let $\psi_A^\gamma$ be an extension of $\psi_A$ from $\K^2$ to $\s_A$ satisfying $\psi_A^\gamma\left(\begin{smallmatrix}
    1 & \varpi x\\
    0 & 1
\end{smallmatrix}\right)=\psi(\varpi^3\gamma x)$. Since ${\psi_A}\left(\begin{smallmatrix}
            1 & \varpi^2 x\\ 0 & 1
        \end{smallmatrix}\right)=1$, the extension of $\psi_A$ to $\U(\varpi \co_4)$ is of the form $\psi_A^\gamma$ for some $\gamma\in \co_1$. Let $\rho_\gamma=\ind_{\s_A}^{\gol}\psi_A^\gamma$. We will use this $\psi_A^\gamma$ for $\gamma \in \cO_1$ below.

\begin{lemma}
\label{lem: o4 SNS complete decomposition}
    The multiplicities of $\sns$ constituents in $\ind_{\zol\uol}^{\gol}(\chi\otimes \psi_t)$ for $t=0,1,2$ are as follows:
\begin{enumerate}
    \item For $t=2$,
    \begin{center}
\begin{tabular}{||c c c c||} 
 \hline
$ \gamma : $ & 0 & non-zero square & non-zero non-square \\ [0.5ex] 
 \hline
 
 no. of distinct $\rho_\gamma$ & $ q $ & $ \frac{q-1}{2}q $ & $ \frac{q-1}{2}q $ \\ 
 \hline
$\langle V_\chi^2, \rho_\gamma \rangle$  & $q-1$ & $q-2$ & $q$ \\
 \hline

\end{tabular}
\end{center}
 \item For $t=1$,
    \begin{center}
\begin{tabular}{||c c c c||} 
 \hline
$ \gamma : $ & 0 & non-zero square & non-zero non-square \\ [0.5ex] 
 \hline
  no. of distinct $\rho_\gamma$ & $ q-1 $ & $ \frac{q-1}{2}q $ & $ 0 $ \\ 
 \hline
$\langle V_\chi^1, \rho_\gamma\rangle$   & $q$ & $2(q-1)$ & $0$ \\
 \hline

\end{tabular}
\end{center} \item For $t=0$,
    \begin{center}
\begin{tabular}{||c c c c||} 
 \hline
$ \gamma : $ & 0 & non-zero square & non-zero non-square \\ [0.5ex] 
 \hline
  no. of distinct $\rho_\gamma$  & $ 1 $ & $ \frac{q-1}{2}q $ & $ 0 $ \\ 
 \hline
$\langle V_\chi^0, \rho_\gamma \rangle$  & $q^2-q$ & $2(q-1)$ & $0$ \\
 \hline

\end{tabular}
\end{center}
\end{enumerate}
\end{lemma}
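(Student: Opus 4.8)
Below is a plan for proving \autoref{lem: o4 SNS complete decomposition}.

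The plan is to compute every table entry $\langle V_\chi^t,\rho_\gamma\rangle_{\gol}$ by Mackey's restriction formula, in the same spirit as \autoref{lem:t=0 o3 sns}, \autoref{lem:t=1 o3 sns} and \autoref{lem:t=2 o3 sns}. First, by \autoref{type of the matrix A} and the remark following it, every $\sns$ constituent of $V_\chi^t$ with $t\le 2$ lies above the character $\psi_A$ of $\K^2$ attached to $A=\mat{\lambda/2}{1}{0}{\lambda/2}\in\M_2(\co_2)$ (note $\varpi^{\ell-t}\equiv 0\bmod\varpi^2$ for $t\le 2$), which is a single $\GL_2(\co_2)$-conjugacy class. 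Since $\ell=4$ is even, \autoref{sec:E.construction} applies with $\K^{\ell/2}=\K^2$: every such representation with central character $\chi$ is $\ind_{\s_A}^{\gol}\widetilde{\psi_A}$ for a one-dimensional extension $\widetilde{\psi_A}$ of $\psi_A$ to $\s_A=\C_{\gol}(\tilde A)\K^2$ restricting to $\chi$ on $\zol$; moreover $\C_{\gol}(\tilde A)=\zol\uol$, so $\s_A=\zol\uol\K^2$. As $\psi_A$ is trivial on $\uol\cap\K^2=\U(\varpi^2\co_4)$, these extensions restrict on $\uol$ to arbitrary characters of $\uol/\U(\varpi^2\co_4)\cong\co_2$; the representations $\rho_\gamma$ of the statement are those with $\widetilde{\psi_A}|_{\U(\varpi\co_4)}=\psi(\varpi^3\gamma\,\cdot\,)$, with a residual $q$-fold freedom in the extension over a fixed $\gamma\in\co_1$.

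Next I would apply Mackey's formula
\[
\langle V_\chi^t,\rho_\gamma\rangle_{\gol}=\bigoplus_{g\in\zol\uol\backslash\gol/\s_A}\big\langle \chi\otimes\psi_t,\ \widetilde{\psi_A}^{g}\big\rangle_{\zol\uol\cap(\s_A)^g}.
\]
By \autoref{lem:Z Kl2 double cosets} (with $\lup\ge t$) the double cosets are represented by $\Sigma=\left\{\mat{1}{0}{z}{w},\ \mat{0}{1}{w}{\varpi z'}\right\}$, and by the computation of \autoref{conditions with Kl2} only representatives $g=\mat{1}{0}{\varpi z}{w}$ with $w\in\co_4^\times$ survive. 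These split into two families. For the representatives with $z$ a non-unit one has $\zol\uol\cap(\s_A)^g=\zol\uol$, and matching $\widetilde{\psi_A}^{g}=\chi\otimes\psi_t$ there is a condition on the $\uol$-part of $\widetilde{\psi_A}$ alone. For the representatives with $z$ a unit, the conjugate of $\mat{1}{u}{0}{1}$ by $g$ has lower-left entry divisible by $\varpi^2$, so it meets $\s_A$ only on a proper subgroup, giving $\zol\uol\cap(\s_A)^g=\zol\U(\varpi\co_4)$; evaluating the $\K^2$-part of the conjugate against $\psi_A$ through $\psi_A(I+\varpi^2 y)=\psi(\varpi^2\mathrm{tr}(\hat A y))$ then contributes a term quadratic in $z$, producing on $\U(\varpi\co_4)$ a character of the form $\psi\left(\varpi^3 u_1(\gamma w-z^2 w^{-1})\right)$ to be matched against $(\chi\otimes\psi_t)\left(\mat{1}{\varpi u_1}{0}{1}\right)=\psi(\varpi^{5-t}u_1)$.

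Equating these produces the governing congruence. For $t\le 1$ its right-hand side is trivial on $\U(\varpi\co_4)$, so the condition reduces to $\gamma\equiv (zw^{-1})^2\bmod\varpi$, which is unsolvable unless $\gamma$ is $0$ or a nonzero square -- this accounts for the vanishing entries in the nonsquare columns for $t=0,1$. For $t=2$ the condition is an inhomogeneous quadratic congruence in $z$ whose number of solutions, organized exactly as the square-counting in \autoref{lem:t=1 o3 sns}, yields the multiplicities $q-1,\ q-2,\ q$ according as $\gamma$ is zero, a nonzero square, or a nonsquare. The three values of $t$ enter only through how $\psi_t$ restricts to $\uol$: $\psi_0$ is trivial on $\uol$, forcing the non-unit-$z$ family to pin down the unique admissible extension over $\gamma=0$ (hence one $\rho_0$, of large multiplicity $q^2-q$); $\psi_1$ is trivial on $\U(\varpi\co_4)$ but not on $\uol$, leaving $q-1$ admissible extensions over $\gamma=0$; $\psi_2$ is already nontrivial on $\U(\varpi\co_4)$. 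Carrying this out for each $t\in\{0,1,2\}$ fills in the three tables, and I would close by checking that $\sum_{\gamma}(\#\rho_\gamma)\cdot\langle V_\chi^t,\rho_\gamma\rangle$ equals $n_{\sns}(V_\chi^t)=(q-1)q^{\ell-2}=(q-1)q^2$ from \autoref{coro:number-ss-sns-vtc}.

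I expect the main difficulty to be the explicit conjugation bookkeeping over $\co_4$: pinning down which unipotent elements actually lie in $(\s_A)^g$, isolating the decisive $z^2$-term (which enters only through $\mathrm{tr}(\hat A\,\cdot\,)$ on the $\K^2$-part, with the diagonal and $\C_{\gol}(\tilde A)$-contributions cancelling), tracking the $\varpi^4=0$ truncations, and keeping the interplay between the parameter $\gamma$, the residual freedom over it, and the count of contributing double cosets uniform across the three values of $t$. Once the character identity displayed above is in hand, the remainder is the same elementary point-counting as in the $\ell=3$ lemmas.
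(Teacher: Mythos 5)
Your proposal follows essentially the same route as the paper's proof: fix the single conjugacy class $A=\mat{\lambda/2}{1}{0}{\lambda/2}\in\M_2(\co_2)$, parametrize the extensions of $\psi_A$ to $\s_A=\zol\uol\K^2$ by their restriction to $\U(\varpi\co_4)$ (the parameter $\gamma$) plus the residual freedom on $\uol$, and evaluate the Mackey sum over the two families of double cosets $\mat{1}{0}{0}{v}$ and $\mat{1}{0}{\varpi z}{w}$, reducing everything to the same pair of character identities and the quadratic congruence $\gamma\equiv z^2+(\text{term depending on }t)\bmod\varpi$. The point-counting and the consistency check against $n_{\sns}(V_\chi^t)=(q-1)q^2$ are likewise the ones the paper carries out, so the plan is correct and not a genuinely different argument.
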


\begin{proof}   
By using Mackey's restriction formula, 
    \begin{equation*}
        \langle\ind_{\zol\uol}^{\gol}(\chi\otimes \psi_t),\ind_{\s_A}^{\gol}{\psi_A^\gamma}\rangle=\oplus_{g\in\{\left(\begin{smallmatrix}
            1&0\\0&v
        \end{smallmatrix}\right)\}} {\langle\chi\otimes\psi_t^g,{\psi_A^\gamma}\rangle}_{\zol\uol} \oplus_{g\in\{\left(\begin{smallmatrix}
            1 & 0\\\varpi z & w
        \end{smallmatrix}\right)\}}{\langle\chi\otimes\psi_t^g,\psi_A^\gamma\rangle}_{\zol\uol^g\cap \s_A}.
    \end{equation*}
where, $v,w\in \co_2^\times$ and $z\in\co_1^\times$.
Then the first and second intertwiner respectively gives us
    \begin{gather*}
        \psi_A^\gamma\begin{pmatrix}
            1 & x\\ 0 & 1
        \end{pmatrix}=\psi(\varpi^{\ell-t} xv) \forall x ;\\
         \psi_A^\gamma\begin{pmatrix}
            1 & \varpi x\\ 0 & 1
        \end{pmatrix}=\psi((\varpi^3 z^2 + \varpi^{\ell-t+1}w)x)  \forall x .
    \end{gather*}
By definition of  $\psi_A^\gamma$ on  $\U(\varpi \cO_\ell)$ and above expressions, we obtain the following relations:
\begin{gather}
\psi((\varpi^3 \gamma - \varpi^{\ell -t+1}v)x) = 1, \,\, \mathrm{for \,\, all}\,\, x; \label{eq:o4 SNS first intertwiner} \\
\psi( (\varpi^3 \gamma - \varpi^{\ell -t+1}w - \varpi^3 z^2)x) = 1, \,\, \mathrm{for \,\, all}\,\, x \label{eq:o4 SNS second intertwiner}. 
\end{gather}
 Now, we will consider the various possibilities of $t$ and $\gamma$.

  \vspace{.4 cm}
  
  \noindent  \textbf{(t=2)}
        We first consider $\gamma = 0$. In this case, only \autoref{eq:o4 SNS second intertwiner} can be satisfied and it gives, $\psi(\varpi^3(w+z^2)x)=1$ which gives $w=-z^2 \mod(\varpi)$. 
         Now suppose $g=\left(\begin{smallmatrix}
            1 & 0\\
            \varpi z & w
      \end{smallmatrix}\right)$ and  $g'=\left(\begin{smallmatrix}
            1 & 0\\
            \varpi z & w+\varpi w'
      \end{smallmatrix}\right)$, then
        \begin{equation}
            \label{eqn: use in t=1, Gl2(o-4) coset equality}
          \begin{pmatrix}
            1 & -w'z^{-1}w^{-1}\\
            0 & 1
        \end{pmatrix} \begin{pmatrix}
            1 & 0\\
            \varpi z & w
        \end{pmatrix} =\begin{pmatrix}
            1 & 0\\
            \varpi z & w+\varpi w'
        \end{pmatrix}  \begin{pmatrix}
            1-\varpi w^{-1}w' & -w'z^{-1}\\
            0 & 1-\varpi w^{-1}w'
        \end{pmatrix} 
        \end{equation}
        gives $\zol\uol g \s_A =\zol\uol g' \s_A$. 
        Hence $w=-z^2 \mod (\varpi)$ gives a unique $w\in \co_2^\times$ for each $z \neq 0$  such that $g=\left(\begin{smallmatrix}
            1 & 0\\ \varpi z & w
        \end{smallmatrix}\right)$ satisfies \autoref{eq:o4 SNS second intertwiner}. Hence the multiplicity of $\rho_\gamma$  in this case is $q-1$. We also note that the number of such extensions from $\uol(\varpi \co_4) $ to $\uol$ is $q.$
   Now onwards, in view of \autoref{eqn: use in t=1, Gl2(o-4) coset equality}, we will assume that $w$ is determined modulo $\varpi$.
      \vspace{1pt}
     Next we assume that $\gamma \in {(\cO_1^\times)}^2.$ In this case \autoref{eq:o4 SNS first intertwiner}  is satisfied for $v=\gamma$ and \autoref{eq:o4 SNS second intertwiner} is satisfied for all $z \in \cO_1^\times$ such that $z^2\neq \gamma$ and therefore $w=\gamma-z^2$. Total number of such $g\in \zol\uol \backslash {\gol}/\s_A$ is $1+(q-3)=q-2$. We further note that the number of such $\rho_\gamma$ giving non-zero intertwiner is $\frac{(q-1)q}{2} $.
     
      \vspace{1pt}
     Finally, we assume that $\gamma \in \cO_1^\times \setminus (\cO_1^\times)^2$ is a nonzero non-square. Then again \autoref{eq:o4 SNS first intertwiner} is satisfied for $v=\gamma$ and \autoref{eq:o4 SNS second intertwiner} is satisfied for all $z \in \cO_1^\times$ which are non squares and $w=\gamma-z^2$. Total number of such $g\in \zol\uol \backslash {\gol}/\s_A$ is $q$. Hence multiplicity of $\rho_\gamma$ in $\vtc$ for this case is $q$. Further, the number of distinct $\rho_\gamma$  is $\frac{(q-1)q}{2} $.

\vspace{.4 cm} 
 \noindent \textbf{(t=1)}  We first consider $\gamma=0$. In this case, \autoref{eq:o4 SNS second intertwiner} can not be satisfied for any $z$ and $w$ whereas \autoref{eq:o4 SNS first intertwiner} will be satisfied for any $v\in\co_2^\times$. Further, for a fixed extension $\psi_A^\gamma$ of $\uol$ and  $g=\left(\begin{smallmatrix}
            1 & 0 \\ 0 & v
        \end{smallmatrix}\right)$,  $g'=\left(\begin{smallmatrix}
            1 & 0\\ 0 & v'
        \end{smallmatrix}\right)$, we have $\psi_t^g = \psi_t^{g'}$ if and only if  
        $v-v' \in (\varpi)$. Hence the total number of distinct cosets, contributing to the multiplicity of $\rho_\gamma$  in $V_\chi^1$, is $q$. Also the number of such distinct $\ind_{\s_A}^{\gol}\psi_A^\gamma$ is $q-1$.
        \vspace{1pt}  
        
    We next consider the case of non-zero square  $\gamma$. For this case only \autoref{eq:o4 SNS second intertwiner} is satisfied with  $z \in \{ \gamma,-\gamma\}$ and  $w\in \co_1^\times$ using \autoref{eqn: use in t=1, Gl2(o-4) coset equality}. Hence the multiplicity of $\rho_\gamma$ in $V_\chi^1$ is $2(q-1)$. The number of $\rho_\gamma$ in $V_\chi^1$ equals the number of squares in $\co_1^\times $ times the number of extensions from $\uol(\varpi \co_4)$ to $\uol$. This gives the total number to be $\frac{(q-1)q}{2} $. 
    
    \vspace{1pt}
   Finally, for non-zero non-square $\gamma$, $\psi_A^\gamma$ does not satisfy any of the two equations. Hence no contribution is obtained from this case. 

\vspace{.4 cm} 

\noindent \textbf{(t=0)} 
We first consider $\gamma=0$.
      In this case, \autoref{eq:o4 SNS first intertwiner} is satisfied for every $v\in \co_2^\times$ and \autoref{eq:o4 SNS second intertwiner} is not satisfied. Hence multiplicity in this case is $q^2-q$. Further, $\psi_A^\gamma$ is determined by $\psi_t$ here. 

      \vspace{1pt}
  The cases of nonzero $\gamma$ can be handled in a similar way as $t=1$. We omit the details here. 
\end{proof}

  By \autoref{coro:number-ss-sns-vtc}, the number of $\sns$ constituents (counted with multiplcity) of $\vtc$ is $q^{\ell -2}(q-1).$ By counting the multiplicities from ~\cref{lem: o2 SNS complete decomposition,lem:t=0 o3 sns,lem:t=1 o3 sns,lem:t=2 o3 sns,lem:t=3 o4 sns,lem: o4 SNS complete decomposition},  we get the required number $q^{\ell-2}(q-1)$. Therefore,  we obtain all $\sns$ constituents of $\vtc$.

\begin{proof}[Proof of \autoref{thm:structure-endomorphism-algebra}]
This is easily obtained from the above discussion regarding the decomposition of $\vtc$ into its irreducible constituents.     
\end{proof}

\subsection*{Acknowledgments}
 The authors sincerely thank Uri Onn and Santosh Nadimpalli for their valuable discussions and insightful feedback on this work. The second named author gratefully acknowledges the financial support provided by SERB, India, through grant SPG/2022/001099.

\bibliography{refs}
\end{document}